\pgfplotsset{compat=1.18}
\numberwithin{equation}{section}
\theoremstyle{plain} 
\newtheorem{prop}[equation]{Proposition}
\newtheorem{cor}[equation]{Corollary}
\newtheorem{thm}[equation]{Theorem}
\newtheorem{lemma}[equation]{Lemma}
\newtheorem{rmk}[equation]{Remark}
\newtheorem*{thm*}{Theorem}
\theoremstyle{definition}
\definecolor{lightblueforque}{HTML}{8fcaff}
\newtheorem{protoerr}{ERROR}
\newtheorem{protoque}[protoerr]{QUESTION}
\newtheorem{prototbe}[protoerr]{To be expanded}
\newtheorem{protoysr}[protoerr]{You should remember}
\definecolor{colarr2}{rgb}{1.0,0,0}
\definecolor{colarr3}{rgb}{0.5,0,0.5}
\definecolor{colarr4}{rgb}{0,0,1.0}
\definecolor{colarr5}{rgb}{1,0.6,0}
\definecolor{colarr6}{rgb}{0,1.0,0}
\definecolor{colarr7}{rgb}{0.5,0.5,0}
\definecolor{colarr8}{rgb}{0,0.5,0.5}
\definecolor{colarr12}{RGB}{255,102,102}
\definecolor{colarr18}{RGB}{150,39,240}
\definecolor{faintcolarr2}{rgb}{1.0,0.5,0.3}
\definecolor{faintcolarr4}{rgb}{0.5,0.5,1.0}
\definecolor{faintcolarr8}{rgb}{0.2,1.0,1.0}
\definecolor{faintcolarr16}{rgb}{0.2,1.0,0.2}
\definecolor{colnewp}{rgb}{0.4,0,0.7}
\definecolor{colsigma}{rgb}{0.1,1.0,0.3}
\definecolor{weight0}{HTML}{14b992}
\definecolor{weight1}{HTML}{6BCE0E}
\definecolor{weight2}{HTML}{f58424}
\definecolor{weight3}{HTML}{c1c530}
\definecolor{weight4}{HTML}{f2b0f8}
\definecolor{weight5}{HTML}{0987f5}
\definecolor{weight6}{HTML}{0a734b}
\definecolor{weight7}{HTML}{b1fae4}
\definecolor{weight8}{HTML}{1de6e3}
\definecolor{weight9}{HTML}{a0c089}
\definecolor{weight10}{HTML}{0987f5}
\definecolor{weight12}{HTML}{0a734b}
\definecolor{weight14}{HTML}{b1fae4}
\definecolor{weight16}{HTML}{1de6e3}
\definecolor{weight18}{HTML}{a0c089}
\definecolor{weight20}{HTML}{170de7}
\definecolor{colarrow}{HTML}{ADD8E6}
\definecolor{almostwhite}{RGB}{248,248,248}
\definecolor{color1}{rgb}{1,0.84,0}
\definecolor{color2}{rgb}{0.08,0.4,0.75}
\definecolor{color3}{rgb}{0.83,0.19,0.19}
\definecolor{color4}{rgb}{0.23,0.88,0.28}
\definecolor{color5}{rgb}{0.8,0,0.6}
\title{\texorpdfstring{Modulo $\tau^{p-1}$ motivic Hochschild homology of modulo $p$ motivic cohomology}{Modulo tau\^{}(p-1) motivic Hochschild homology of modulo p motivic cohomology}}
\author{Federico Ernesto Mocchetti\thanks{Università degli Studi di Milano - Universit{\"a}t Osnabr{\"u}ck- \href{mailto:federico.mocchetti@unimi.it}{federico.mocchetti@unimi.it}}}
\date{\today}
\begin{document}

\maketitle

\begin{abstract}
    { \footnotesize
    We use the motivic Greenlees spectral sequence from \cite{Mocchetti2024b} to compute Hochschild homology in the stable motivic homotopy category over an algebraically closed field. Our target is $MHH(M\mathbb{Z}/p)/\tau^{p-1}$, where $M\mathbb{Z}/p$ is modulo $p$ motivic cohomology, $p$ a prime number different from the characteristic of the base.
    }
\end{abstract}

\setcounter{section}{-1}
\section{Introduction}

This paper follows \cite{Mocchetti2024b}.

In the first paper, we set up a spectral sequence in motivic homotopy theory mimicking a classical construction by John Greenlees \cite{Greenlees2016}. Recall that in stable motivic homotopy theory, we define a bi-graded family of spheres
\[
    S^{a,b}= (S^1_s)^{a-b} \wedge (\mathbb{G}_m)^b
\]
where $S^1_s$ is the constant sheaf at the simplicial model for the circumference and $\mathbb{G}_m$ is the sheaf represented by the multiplicative subgroup of the affine line. Whenever we distinguish between the two degrees, we will call the first one \textit{degree} and the second one \textit{weight}.

To set up our spectral sequence, we first fix an $E_{\infty}$ algebra $Q$ in $SH(S)$ and define a certain $t$-structure on $Q$-modules, see \cite[Definition 1.2.]{Mocchetti2024b}; if the homotopy ring of $Q$ has a nice shape (for instance, it is concentrated in degree zero), we can identify for any $Q$-module $R$:
\[
    \pi_{i,j}(fib(R_{\leq n} \to R_{\leq n-1})) \cong \begin{cases}
        \pi_{n,j} R & \text{for } i=n\\
        0 & \text{else.}
    \end{cases}
\]

Our main result goes as follows (confront with \cite[Proposition 1.22.]{Mocchetti2024b}):

\begin{prop}\label{prop:generic.spectral.seq}
Suppose we are given a map of commutative motivic ring spectra in $R \to Q$ in $Alg_{E_{\infty}}(SH(S))_{Q/}$ and that $R\cong  R_{\geq 0}$ is connective with respect to our $t$-structure. Then there is a strongly convergent multiplicative spectral sequence:
\[
    E^2_{s,t,*}= \pi_{s+t,*}(Q \wedge_{R} fib(R_{\leq t} \to R_{\leq t-1})) \Rightarrow \pi_{s+t,*}(\lim_{\overleftarrow{n}} Q_n)
\]
where $Q_n= Q \wedge_R R_{\leq n}$; differentials are of the form:
\[
    d^r: E^r_{s,t,*} \to E^r_{s-r,t+r-1,*}.
\]
    
\end{prop}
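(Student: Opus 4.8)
The plan is to realize this spectral sequence as the one associated with the filtered $E_\infty$-ring obtained by base-changing the Postnikov filtration of $R$ along $R\to Q$. First I would use the $t$-structure of \cite[Definition 1.2]{Mocchetti2024b} together with the hypothesis $R\cong R_{\geq 0}$ to form the Postnikov tower $\cdots\to R_{\leq 2}\to R_{\leq 1}\to R_{\leq 0}$, which starts at $n=0$ (so $R_{\leq n}=0$ for $n<0$) and whose fibres $fib(R_{\leq n}\to R_{\leq n-1})$ are concentrated in degree $n$, exactly as recorded in the excerpt. Applying the functor $Q\wedge_R(-)$, which is a symmetric monoidal left adjoint and hence exact (it preserves cofibre, hence fibre, sequences and all colimits), turns this into a tower $\cdots\to Q_2\to Q_1\to Q_0$ with $Q_n=Q\wedge_R R_{\leq n}$ and, by exactness, $fib(Q_n\to Q_{n-1})\cong Q\wedge_R fib(R_{\leq n}\to R_{\leq n-1})$. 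Passing to homotopy groups over all $n$ produces an exact couple whose $E$-term is $\bigoplus_{s,t,*}\pi_{s+t,*}\bigl(Q\wedge_R fib(R_{\leq t}\to R_{\leq t-1})\bigr)$; this is the page displayed as $E^2$, and a direct inspection of the maps of the couple (together with the page reindexing that matches Greenlees's convention adopted in \cite{Mocchetti2024b}) gives the differentials the stated shape $d^r\colon E^r_{s,t,*}\to E^r_{s-r,t+r-1,*}$.

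Next I would prove strong convergence. The point is that $fib(R_{\leq t}\to R_{\leq t-1})$ vanishes for $t<0$ and is $t$-connective for $t\geq 0$; since $Q$ is a retract of the connective ring $R$ (by the $Q/$-hypothesis on the map $R\to Q$) it is itself connective, and exactness plus compatibility of the $t$-structure with $\wedge_R$ then shows $Q\wedge_R fib(R_{\leq t}\to R_{\leq t-1})$ is $t$-connective. Hence $E^2_{s,t,*}=0$ unless $s\geq 0$ and $t\geq 0$: the spectral sequence lies in the first quadrant, so in each tridegree only finitely many differentials are nonzero and $E^\infty$ is attained at a finite page. Moreover, in each fixed total degree the tower $\{\pi_{*,*}Q_n\}_n$ is eventually constant, hence Mittag--Leffler, so the pertinent $\lim^1$ vanishes, $\pi_{*,*}(\lim_{\overleftarrow n}Q_n)=\lim_n\pi_{*,*}Q_n$, and this group carries the expected finite filtration. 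Together these give strong convergence to $\pi_{s+t,*}(\lim_{\overleftarrow n}Q_n)$.

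For multiplicativity I would observe that the Postnikov filtration of $R$ is multiplicative: taking connective covers $G_n=fib(R\to R_{\leq n-1})=\tau_{\geq n}R$ one has $R=G_0\supseteq G_1\supseteq\cdots$ with $R_{\leq n-1}=R/G_n$ and pairings $G_m\wedge_R G_n\to G_{m+n}$ (connective $\wedge_R$ connective is connective, by compatibility of the $t$-structure with the monoidal structure). Base-changing along the $E_\infty$-map $R\to Q$ yields a multiplicative decreasing filtration $\{Q\wedge_R G_n\}_n$ of the $E_\infty$-ring $Q$, with $Q/(Q\wedge_R G_n)=Q_{n-1}$ and the pairings $(Q\wedge_R G_m)\wedge_Q(Q\wedge_R G_n)\cong Q\wedge_R(G_m\wedge_R G_n)\to Q\wedge_R G_{m+n}$. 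The spectral sequence of such a filtered $E_\infty$-ring is multiplicative, with $E^\infty$-product compatible with the product on $\pi_{*,*}$ of the completion $\lim_{\overleftarrow n}Q_n$, by the standard argument (as in \cite{Mocchetti2024b}).

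The step I expect to require real care is the convergence input — precisely, that the $t$-structure of \cite[Definition 1.2]{Mocchetti2024b} interacts with $\wedge_R$ and with the particular ring $Q$ well enough to guarantee the first-quadrant vanishing, and hence strong rather than merely conditional convergence; the rest is a transcription of the formalism. Indeed, the statement is the analogue of \cite[Proposition 1.22]{Mocchetti2024b}, and once the $t$-structural bookkeeping is in place the argument there can be carried over essentially verbatim.
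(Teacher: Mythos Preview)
The paper does not prove this proposition: it is stated in the introduction as a restatement of \cite[Proposition 1.22]{Mocchetti2024b}, with no argument given here beyond the citation. Your sketch is a reasonable reconstruction of how such a proof would go (filtered $E_\infty$-ring from the base-changed Postnikov tower, exact couple, multiplicativity from the multiplicative filtration), and you correctly identify the cited result at the end.

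One point to flag: immediately after the proposition the paper says only that ``by construction, the spectral sequence lives in the upper half-plane,'' not the first quadrant; the first-quadrant property is asserted later (Proposition \ref{prop:spectral.seq.mod.tau}) only in the specific case $Q=(M\mathbb{Z}/p)/\tau^{p-1}$, $R=\mathcal{A}(p)/\tau^{p-1}$. Your argument for $E^2_{s,t,*}=0$ when $s<0$ relies on $Q\wedge_R(\text{something $t$-connective})$ being $t$-connective, which needs a compatibility of the $t$-structure with $\wedge_R$ that is not part of the hypotheses of the general statement. You rightly single this out as the step requiring care, but as written your convergence argument assumes more than the paper claims in general; strong convergence in \cite{Mocchetti2024b} presumably comes from the upper-half-plane structure plus completeness/exhaustiveness of the tower rather than from a first-quadrant bound.
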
 

By construction, the spectral sequence lives in the upper half-plane.

We use this spectral sequence to study motivic Hochschild homology. Given $Q \in SH(S)$ a ring spectrum, we define its motivic Hochschild homology as:
\[
    MHH(Q) \cong Q\wedge_{Q \wedge Q^{op}}Q.
\]
Recall that if $S=Spec(k)$ with $k=\bar{k}$ and algebraically closed field and $Q=M\mathbb{Z}/p$ is modulo $p$ motivic cohomology, $p \neq char(k)$, then:
\[
    \pi_{*,*}(M\mathbb{Z}/p) \cong \mathbb{F}_p[\tau] \qquad |\tau|=(0,-1)
\]
(see section \ref{sec:setup} for a digression on this result). In \cite{Mocchetti2024b} we identified the homotopy groups of a $\tau$-inverted version of motivic Hochschild homology for this choice of $S$ and $Q$. In this paper, we use the motivic Greenlees spectral sequence above to compute:
\[
    \pi_{*,*}(MHH(M\mathbb{Z}/p)/\tau^{p-1}).
\]

Observe that the knowledge of the $\tau$-inverted and mod-$\tau^{p-1}$ versions can be used to recover the homotopy ring of $MHH(M\mathbb{Z}/p)$ \cite[Section 3.3]{DHOO2022}.

\clearpage
\relax

\tableofcontents
\clearpage

\section{Setting up the spectral sequence}\label{sec:setup}
We begin by proving that we can produce a spectral sequence from \cite[Proposition 1.22.]{Mocchetti2024b} involving mod-$\tau^{p-1}$ motivic Hochschild homology. First, we show that the spectrum $(M\mathbb{Z}/p)/{\tau^{p-1}}$ carries an $E_{\infty}$ ring structure whenever our base scheme is an algebraically closed field of characteristic coprime to $p$. We will prove more generally that $(M\mathbb{Z}/p)/{\tau^{n}}$ is $E_{\infty}$ in $SH(Spec(\mathbb{Z}))$, hence over any scheme by base change, for any positive integer $n$. At first, we recall what happens when we work in $SH(Spec(k))$ for $k$ an algebraically closed field of characteristic not $p$, expanding an argument from \cite[Section 3.2]{DHOO2022}. 

We begin by recalling some facts about the homotopy ring of modulo $p$ motivic cohomology, to clarify the origin of the element $\tau$. We actually look at the motivic cohomology of the base with $M\mathbb{Z}/p$ coefficients, which is the dual of $\pi_{*,*}(M\mathbb{Z}/p)$: in the end, we will have to change the sign to all degrees.

Since we are in the stable motivic homotopy category over the spectrum of a field $k$ containing $1/p$, we can apply the norm residue isomorphism theorem \cite{Voevodsky2011, Voevodsky2003b}, so there is a description:
\[
    H^{a,b}(Spec(k), M\mathbb{Z}/p) \cong 
    \begin{cases}
        H^a_{\textit{ét}}(k, \mu_p^{\otimes b}) &\text{for } a \leq b,\, b \geq 0 \\
        0 & \text{elsewhere.}
    \end{cases}
\]
Now, as $k$ is algebraically closed of characteristic different from $p$, it contains a primitive $p$-th root of unity. Then the above description specialises to:
\[
    H^{*,*}(Spec(k), M\mathbb{Z}/p) \cong K^M_*(k)/p[\tau]
\]
for some $\tau$ in degree $(0,1)$; the module $K^M_a(F)/p$ is placed in bidegree $(a,a)$.
In fact, the presence of a primitive $p$-th root of unity in $k$ implies that the sheaf of the $p$-th roots of unity is $\mu_p$ is constant and isomorphic to $\mathbb{Z}/p\mathbb{Z}$. Hence any tensor power:
    \[
        \mu_p^{\otimes b}= \underbrace{\mu_p \otimes_{\mathbb{Z}/p\mathbb{Z}} \ldots \otimes_{\mathbb{Z}/p\mathbb{Z}} \mu_p}_{b \text{ times}} \cong \mathbb{Z}/p\mathbb{Z}
    \]
is as well. The identification
\[
    H^a_{\textit{ét}}(k, \mu_p) \cong K^M_a(k)/p
\]
was proven by Voevodski in the papers mentioned above.

If $k = \bar{k}$ is moreover algebraically closed, its Milnor $K$-theory $K^M_*(k) \cong \mathbb{Z}$ is isomorphic to the integers (concentrated in degree 0); this follows straightforwardly from the definition. Hence: 
\begin{equation} \label{eq:motivic.coh.alg.closed}
    H^{*,*}(Spec(k), M\mathbb{Z}/p) \cong \mathbb{F}_p[\tau].
\end{equation}

To show that $(M\mathbb{Z}/p)/\tau^{n}$ is an $E_{\infty}$ ring spectrum, we approach it from a different category, where commutativity is more immediate.
By theorem \cite[Theorem C.2]{Spitzweck2018} $M\mathbb{Z}$ is a strongly periodizable motivic ring spectrum, hence, by \cite[Corollary C.3]{Spitzweck2018} we can see the category of derived Tate motives as the derived category of modules over a certain $E_{\infty}$ algebra in graded complexes of abelian groups. By base changing to $\mathbb{Z}/p$, we get to see $M\mathbb{Z}/p$ as an $E_{\infty}$ algebra $\mathcal{M}$ in graded complexes of $\mathbb{F}_p$ vector spaces; the equivalence, in particular, allows to identify the homology of $\mathcal{M}$ with respect to the canonical $t$-structure on graded complexes of abelian groups with the $M\mathbb{Z}/p$-cohomology of the base scheme, which 
 in our case is the graded ring $\mathbb{F}_p[\tau]$ with $|\tau|=(0,1)$.

In the category of graded complexes of $\mathbb{F}_p$ vector spaces, we consider the graded complex $\mathbf{M}$ which is trivial anywhere but in degree $0$, where it coincides with $\mathbb{F}_p[\tau]$, and has trivial differentials. Observe that $\mathbf{M}$ is a formal model for the graded ring $\mathbb{F}_p[\tau]$; in other words, $\mathbf{M}$ is quasi-isomorphic to its homology, which is the graded ring $\mathbb{F}_p[\tau]$.
This model is strictly commutative (hence admits an  $E_{\infty}$ structure). Observe that $\mathcal{M}$ and $\mathbf{M}$ are also equivalent as $E_{\infty}$ algebras in the category of graded complexes of $\mathbb{F}_p$ vector spaces, as they both arise as zero truncations of the free $E_{\infty}$ $\mathbb{F}_p$-algebra on a single generator $\tau$ in degree $(0,1)$. 

Now, given any positive integer $n$, the formal model $\mathbf{M}$ comes by construction with a strictly commutative map $\mathbf{M} \to \mathbf{M}/\tau^n$, induced by the quotient map in commutative rings $\mathbb{F}_p[\tau] \to \mathbb{F}_p[\tau]/\tau^n$.
Using the equivalence of \cite[Theorem C.2]{Spitzweck2018} in the other direction, we obtain a map $M\mathbb{Z}/p \to (M\mathbb{Z}/p)/\tau^n$ of $E_{\infty}$ algebras in the category of motives as well.
\vspace{1em}

Suppose now our base scheme is the spectrum of a Dedekind domain $B$ where $p$ is invertible. By results of Geisser \cite[Theorem 1.2]{Geisser2004} and Spitzweck \cite[Theorem 3.9]{Spitzweck2018} there is still an identification, for each integer $l$:
\[
    M\mathbb{Z}/p(l) \cong \tau_{\leq l}R\epsilon_*( \mathbb{Z}/p(l)_{\textit{ét}}) \cong \tau_{\leq l}R\epsilon_*( \mu_p^{\otimes l}[0])
\]
where $\epsilon$ is the change of site map from the étale to the Zariski site. Hence we can still link the motivic cohomology of $M\mathbb{Z}/p$ with the étale cohomology of the sheaf of roots of unity.

As the identifications from \cite[Theorem C.2, Corollary C.3]{Spitzweck2018} work in this setting as well, we still can identify $M\mathbb{Z}/p$ with an $E_{\infty}$ algebra $\mathcal{M}$ in graded complexes of $\mathbb{F}_p$ vector spaces. We consider the canonical $t$-structure on this category and call $c_0$ the connective cover functor. By abuse of notation, we also indicate with $c_0$ the connective cover functor for the $t$-structure on motives induced via \cite[Corollary C.3]{Spitzweck2018}. By the above-mentioned result by Geisser, the homology  of $c_0(\mathcal{M})$ can be recovered from the degree zero étale cohomology of $B$ with coefficients in the powers of the $p$-th roots of unity sheaf $\mu_p$:
\[
    H_{a,b}(c_0(\mathcal{M})) \cong H^{a,b}(Spec(B), c_0(M\mathbb{Z}/p)) 
    \cong \begin{cases}
        H^0_{\textit{ét}}(B, \mu_p^{\otimes b}) & \text{for } a=0\\
        0 & \text{else.}
    \end{cases}
\]
We study then the $B$ points $H^0_{\textit{ét}}(B, \mu_p^b) \cong \mu_p^{\otimes l}(B)$ for any non-negative integer $l$. 
 
To do so, consider a geometric point $\bar{x}$ of $Spec(B)$. As for each integer $l$ the sheaf $\mu_p^{\otimes l}$ is a local system and $Spec(B)$ is connected, we can associate to $\mu_p^{\otimes l}$ an action of the étale fundamental group $\pi_1^{\textit{ét}}(Spec(R), \bar{x})$ on the geometric fibre $\mu_p^{\otimes l}(\bar{x})$, such that the fixed points correspond to $\mu_p^{\otimes l}(B)$. 
Observe now that, as $p$ is invertible in $B$, $\bar{x}$ corresponds to the spectrum of an algebraically closed field where $p$ is invertible. Hence, as seen in the previous part, the fibre of any tensor power of $\mu_p$ at $\bar{x}$ is isomorphic to the constant sheaf $\mathbb{F}_p$. If $\mu_p$ corresponds to a representation:
\[
    \rho: \pi_1^{\textit{ét}}(Spec(B), \bar{x}) \to Aut(\mathbb{F}_p)\cong \mathbb{F}_p^*
\]
the sheaf $\mu_p^{\otimes l}$ is then associated with the $l$-th power of this representation $\rho$.

If $\rho^l$ is trivial, then all points are fixed: $\mu_p^{\otimes l}(B) \cong \mathbb{F}_p$; in all the other cases, the action has no fixed points, so $\mu_p^{\otimes l}(B) \cong 0$.
Hence $H_{*,*}(c_0(\mathcal{M})) \cong \mathbb{F}_p[\tau]$, where $|\tau|=(0,m)$, $m$ being the smallest positive integer such that the sheaf $\mu_p^{\otimes m} \cong \mathbb{Z}/p\mathbb{Z}$ trivialises.

As in the previous case, there is then an obvious formal model $\mathbf{M}_0$ for $c_0(\mathcal{M})$, given by the graded $\mathbb{F}_p$ vector space $\mathbb{F}_p[\tau]$ in position zero, and everywhere else a trivial entry. This model comes with a strictly commutative map:
\[
    \mathbf{M}_0 \to \mathbf{M}_0/\tau^n
\]
for each positive integer $n$, induced by the quotient map in graded $\mathbb{F}_p$ algebras $\mathbb{F}_p[\tau] \to \mathbb{F}_p[\tau]/\tau^n$. We can thus consider a pushout in $E_{\infty}$ complexes of graded $\mathbb{F}_p$ vector spaces:
\[
    \begin{tikzcd}
        \mathbf{M}_0
            \arrow[r]
            \arrow[d]
            \arrow[dr, phantom, "\ulcorner", pos=1]
        & \mathcal{M}
            \arrow[d]
        \\ \mathbf{M}_0 / \tau^n
            \arrow[r]
        & \mathcal{M}/\tau^n
    \end{tikzcd}
\]
The image of $\mathcal{M}/\tau^n$ in Tate motives via the equivalence of \cite{Spitzweck2018} is the desired $E_{\infty}$ object $(M\mathbb{Z}/p)/\tau^n$.
\vspace{1em}

Finally, suppose we are working over $Spec(\mathbb{Z})$.
Let $i: Spec(\mathbb{F}_p) \to Spec(\mathbb{Z})$ be the inclusion of the closed point $p$, and let $j:Spec(\mathbb{Z}[1/p]) \to Spec(\mathbb{Z})$ be the open complement. We repeat the constructions of the above points to these three schemes.  By \cite[Corollary 3.3]{Geisser2004} there is a distinguished triangle:
\[
    \begin{adjustbox}{max width=\textwidth, center}
    $\displaystyle
        i_{*} M\mathbb{Z}/p(n-1)^{Spec(\mathbb{F}_p)}[-2]\to M\mathbb{Z}/p(n)^{Spec(\mathbb{Z})} \to j_{*} M\mathbb{Z}/p(n)^{Spec(\mathbb{Z}[1/p])}
    $
    \end{adjustbox}
\]
which in turn produces a long exact sequence in cohomology:
\[
    \begin{adjustbox}{max width=\textwidth, center}
    $\displaystyle
        \cdots \to H^{*-2,*-1}(Spec(\mathbb{F}_p), M\mathbb{Z}/p) 
        \to H^{*,*}(Spec(\mathbb{Z}), M\mathbb{Z}/p)
        \to H^{*,*}(Spec(\mathbb{Z}[1/p]), M\mathbb{Z}/p)
        \to \cdots
    $
    \end{adjustbox}
\]
We can describe $H^{*,*}(Spec(\mathbb{F}_p), M\mathbb{Z}/p)$. In fact, by results of Geisser-Levine \cite{GeiLev2000}, $H^{s,t}(Spec(\mathbb{F}_p), M\mathbb{Z}/p)$ is non trivial only on the $s=2t$-line, where it coincides with the modulo-$p$ Milnor $K$-theory of the field with $p$ elements. From the definition, one sees that $K^M_*(\mathbb{F}_p)/p$ is non-trivial only in degree $0$, hence $H^{*,*}(Spec(\mathbb{F}_p), M\mathbb{Z}/p)$ is concentrated in bi-degree $(0,0)$.
We have then isomorphism:
\[
     H^{s,t}(Spec(\mathbb{Z}), M\mathbb{Z}/p)
    \xrightarrow{\sim} H^{s,t}(Spec(\mathbb{Z}[1/p]), M\mathbb{Z}/p)
\]
for all integers $(s,t)$ but $(2,1)$ and $(1,1)$. But then, when considering the connective covers, we find isomorphism for the whole cohomology rings:
\[
    H^{*,*}(Spec(\mathbb{Z}), c_0(M\mathbb{Z}/p)) \cong H^{*,*}(Spec(\mathbb{Z}[1/p]), c_0(M\mathbb{Z}/p)).
\]
Hence everything we deduced above for Dedekind domains with $1/p$ also applies to $\mathbb{Z}$ as well. We conclude that there is, for each $n \in \mathbb{N}$, a map:
\[
    M\mathbb{Z}/p \to (M \mathbb{Z}/p)/\tau^n
\]
of $E_{\infty}$ motivic ring spectra over $Spec(\mathbb{Z})$.
\vspace{1em}

Whatever the base scheme, we find the corresponding quotient of dual motivic Steenrod algebra via a pushout square of $E_{\infty}$ ring spectra:
\begin{equation*}
    \begin{tikzcd}
        M\mathbb{Z}/p
            \arrow[r]
            \arrow[d]
            \arrow[dr, phantom, "\ulcorner", very near end]
        & \mathcal{A}(p)
            \arrow[d]
        \\ (M\mathbb{Z}/p)/\tau^n
            \arrow[r]
        & \mathcal{A}(p)/\tau^n
    \end{tikzcd}
\end{equation*}
Hence $\mathcal{A}(p)/\tau^n$ comes by definition with the arrows to and from $(M\mathbb{Z}/p)/\tau^n$ that are needed in \cite[Proposition 1.22.]{Mocchetti2024b}.
\vspace{1em}

For the rest of the paper, we will work in $SH(Spec(k))$ with $k$ algebraically closed of characteristic different from $p$. $\mathcal{A}(p)/\tau^{n}$ is connective in our $t$-structure on $Mod_{(M\mathbb{Z}/p)/\tau^n}$ for any choice of $n$ because $\mathcal{A}(p)$ is connective in the corresponding $t$-structure on $Mod_{(M\mathbb{Z}/p)}$, as seen in \cite{Mocchetti2024b}. \cite[Proposition 1.22.]{Mocchetti2024b} applies and we get a spectral sequence as in \cite[Equation 1.23.]{Mocchetti2024b}, with $Q= (M\mathbb{Z}/p)/\tau^n$ and $R=\mathcal{A}(p)/\tau^{n}$.

Further specializing to $n=p-1$, we can improve the description of the $E^2$ and the $E^{\infty}$ pages thanks to the nice homotopy rings of the spectra involved, as in \cite{Mocchetti2024b}. The structure of the dual motivic Steenrod algebra at the prime $p$ can be recovered from \cite{HKO2013}; quotienting by $\tau^{p-1}$ we get:
\begin{gather*}
    \pi_{*,*}((M\mathbb{Z}/p)/\tau^{p-1}) \cong\mathbb{F}_p[\tau]/\tau^{p-1}\\
    \pi_{*,*}(\mathcal{A}(p)/\tau^{p-1}) \cong \begin{cases}
        \frac{\mathbb{F}_2[\tau,\, \xi_{i+1},\, \tau_i]_{i \geq 0}}{\langle \tau,\, \tau_i^2-\tau \xi_{i+1} \rangle} \cong \frac{\mathbb{F}_2[\tau,\, \xi_{i+1},\, \tau_i]_{i \geq 0}}{\langle \tau,\, \tau_i^2 \rangle}  & \text{ for } p=2\\
        \frac{\mathbb{F}_p[\tau,\, \xi_{i+1},\, \tau_i]_{i \geq 0}}{\langle \tau^{p-1},\, \tau_i^2 \rangle} & \text{ for } p \text{ odd.}
    \end{cases}
\end{gather*}
In particular, as $\pi_{*,*}((M\mathbb{Z}/p)/\tau^n)$ is an algebra over the field with $p$ elements and both $\pi_{i,*}((M\mathbb{Z}/p)/\tau^n)$ and $\pi_{i,*}(\mathcal{A}(p)/\tau^{n})$ vanish for $i < 0$, we can apply \cite[Lemma 1.25.]{Mocchetti2024b} to deduce the isomorphism on the convergence term of the spectral sequence:
\[
    \pi_{*,*}(\lim_{\overleftarrow{n}} \left((M\mathbb{Z}/p)/\tau^{p-1} \wedge_{\mathcal{A}(p)/\tau^{p-1}} (\mathcal{A}(p)/\tau^{p-1})_{\leq n}\right)) \cong \pi_{*,*}((M\mathbb{Z}/p)/\tau^{p-1}).
\]

It follows from the presentation of  $\pi_{*,*}(\mathcal{A}(p)/\tau^{p-1})$ that the shifted fibres
\[
    \Sigma^{-t,0}fib((\mathcal{A}(p)/\tau^{p-1})_{\leq n} \to (\mathcal{A}(p)/\tau^{p-1})_{\leq n-1})
\]
are flat over $(M\mathbb{Z}/p)/\tau^{p-1}$ (actually, they are a coproduct of shifted copies of $(M\mathbb{Z}/p)/\tau^{p-1}$, as $\mathcal{A}(p)/\tau^{p-1}$ is). Hence, \cite[Lemma 1.27.]{Mocchetti2024b} applies and the $E^2$ page can be expressed as:
\[
    E^2_{s,t,*}=\pi_{s,*}(MHH(M\mathbb{Z}/p)/\tau^{p-1}) \otimes_{\pi_{0,*}((M\mathbb{Z}/p)/\tau^{p-1})} \pi_{t,*}(\mathcal{A}(p)/\tau^{p-1})
\]
where
\[
    MHH(M\mathbb{Z}/p)/\tau^{p-1}= (M\mathbb{Z}/p)/\tau^{p-1} \wedge_{\mathcal{A}(p)/\tau^{p-1}}  (M\mathbb{Z}/p)/\tau^{p-1}.
\]

Combining all together:
\begin{prop}\label{prop:spectral.seq.mod.tau}
Let $k$ be an algebraically closed field, and let $p$ be a prime number, $p\neq char(k)$. Given $(M\mathbb{Z}/p)/\tau^{p-1}$ and $\mathcal{A}(p)/\tau^{p-1}$ defined as above as $E_{\infty}$ algebras in $SH(Spec(k))$, let
\[
    MHH(M\mathbb{Z}/p)/\tau^{p-1}= (M\mathbb{Z}/p)/\tau^{p-1} \wedge_{\mathcal{A}(p)/\tau^{p-1}}  (M\mathbb{Z}/p)/\tau^{p-1}.
\]
Then we have a first-quadrant multiplicative spectral sequence:
\[
\begin{adjustbox}{max width=\textwidth, center}
$\displaystyle
   E^2_{s,t,*}=\pi_{s,*}(MHH(M\mathbb{Z}/p)/\tau^{p-1}) \otimes_{\pi_{0,*}((M\mathbb{Z}/p)/\tau^{p-1})} \pi_{t,*}(\mathcal{A}(p)/\tau^{p-1}) \Rightarrow \pi_{s+t,*}((M\mathbb{Z}/p)/\tau^{p-1})
$    
\end{adjustbox}
\]
with differentials:
\[
    d^k=d^k_{s,t,*}:E^k_{s,t,*} \to E^k_{s-k,t+k-1,*}.
\]
\end{prop}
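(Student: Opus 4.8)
The plan is to obtain the spectral sequence as the instance of the generic motivic Greenlees spectral sequence of Proposition~\ref{prop:generic.spectral.seq} with $Q = (M\mathbb{Z}/p)/\tau^{p-1}$ and $R = \mathcal{A}(p)/\tau^{p-1}$, and then to replace the generic $E^2$-term and abutment by the explicit expressions that the homotopy rings computed above make available. So the first step is to check the two hypotheses of Proposition~\ref{prop:generic.spectral.seq}. The required map $R \to Q$ in $Alg_{E_\infty}(SH(Spec(k)))_{Q/}$ is exactly the datum produced above: the pushout square defining $\mathcal{A}(p)/\tau^{p-1}$ equips it with a map from $(M\mathbb{Z}/p)/\tau^{p-1}$, while the augmentation $\mathcal{A}(p) \to M\mathbb{Z}/p$ followed by the quotient gives the map $\mathcal{A}(p)/\tau^{p-1} \to (M\mathbb{Z}/p)/\tau^{p-1}$, both as maps of $E_\infty$ motivic ring spectra. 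Connectivity of $R \cong R_{\geq 0}$ for the $t$-structure on $Mod_{(M\mathbb{Z}/p)/\tau^{p-1}}$ was likewise recorded above: it follows from connectivity of $\mathcal{A}(p)$ over $M\mathbb{Z}/p$. Thus Proposition~\ref{prop:generic.spectral.seq} yields a strongly convergent multiplicative spectral sequence
\[
 E^2_{s,t,*} = \pi_{s+t,*}\bigl(Q \wedge_R fib(R_{\leq t} \to R_{\leq t-1})\bigr) \Rightarrow \pi_{s+t,*}(\lim_{\overleftarrow{n}} Q_n), \qquad Q_n = Q \wedge_R R_{\leq n},
\]
with differentials $d^k \colon E^k_{s,t,*} \to E^k_{s-k,t+k-1,*}$, which lives in the upper half-plane by construction.

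The second step is to simplify the abutment and the $E^2$-page. For the abutment: $\pi_{*,*}((M\mathbb{Z}/p)/\tau^{p-1}) \cong \mathbb{F}_p[\tau]/\tau^{p-1}$ is an $\mathbb{F}_p$-algebra and both $\pi_{i,*}((M\mathbb{Z}/p)/\tau^{p-1})$ and $\pi_{i,*}(\mathcal{A}(p)/\tau^{p-1})$ vanish for $i<0$, so \cite[Lemma~1.25.]{Mocchetti2024b} applies and identifies $\pi_{*,*}(\lim_{\overleftarrow{n}} Q_n) \cong \pi_{*,*}((M\mathbb{Z}/p)/\tau^{p-1})$. For the $E^2$-page: the presentation of $\pi_{*,*}(\mathcal{A}(p)/\tau^{p-1})$ exhibits it as a free module over $\pi_{*,*}((M\mathbb{Z}/p)/\tau^{p-1}) = \mathbb{F}_p[\tau]/\tau^{p-1}$, so $\mathcal{A}(p)/\tau^{p-1}$ is, as a $(M\mathbb{Z}/p)/\tau^{p-1}$-module, a coproduct of bidegree shifts of $(M\mathbb{Z}/p)/\tau^{p-1}$; consequently each shifted fibre $\Sigma^{-t,0} fib(R_{\leq t} \to R_{\leq t-1})$ is flat over $(M\mathbb{Z}/p)/\tau^{p-1}$. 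This is precisely the flatness hypothesis of \cite[Lemma~1.27.]{Mocchetti2024b}, which then rewrites
\[
 E^2_{s,t,*} \cong \pi_{s,*}(Q \wedge_R Q) \otimes_{\pi_{0,*}(Q)} \pi_{t,*}(R) = \pi_{s,*}(MHH(M\mathbb{Z}/p)/\tau^{p-1}) \otimes_{\pi_{0,*}((M\mathbb{Z}/p)/\tau^{p-1})} \pi_{t,*}(\mathcal{A}(p)/\tau^{p-1}).
\]
Finally, $MHH(M\mathbb{Z}/p)/\tau^{p-1} = (M\mathbb{Z}/p)/\tau^{p-1} \wedge_{\mathcal{A}(p)/\tau^{p-1}} (M\mathbb{Z}/p)/\tau^{p-1}$ is a relative smash of connective modules over a connective ring, hence connective, so $\pi_{s,*}$ of it vanishes for $s<0$; together with the upper half-plane statement this shows the spectral sequence is concentrated in the first quadrant. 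Assembling the three identifications with the generic spectral sequence produces the statement.

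The only step that goes beyond bookkeeping is supplying the flatness (indeed freeness) input to \cite[Lemma~1.27.]{Mocchetti2024b}: one must read off from the explicit presentation of $\pi_{*,*}(\mathcal{A}(p)/\tau^{p-1})$ that it is a free $\mathbb{F}_p[\tau]/\tau^{p-1}$-module, and combine this with the fact that, because the homotopy ring of $(M\mathbb{Z}/p)/\tau^{p-1}$ is concentrated in degree zero, the Postnikov-type fibre $fib(R_{\leq t} \to R_{\leq t-1})$ is a wedge of $(t,*)$-suspensions of $(M\mathbb{Z}/p)/\tau^{p-1}$ indexed by a homogeneous basis of $\pi_{t,*}(\mathcal{A}(p)/\tau^{p-1})$, so that smashing with $Q$ over $R$ behaves as required and the $E^2$-term splits as the displayed tensor product. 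Everything else is an invocation of results already established in the excerpt.
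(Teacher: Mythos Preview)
Your proposal is correct and follows essentially the same approach as the paper: both invoke the generic motivic Greenlees spectral sequence of \cite[Proposition~1.22.]{Mocchetti2024b} with $Q=(M\mathbb{Z}/p)/\tau^{p-1}$ and $R=\mathcal{A}(p)/\tau^{p-1}$, then use \cite[Lemma~1.25.]{Mocchetti2024b} to identify the abutment and \cite[Lemma~1.27.]{Mocchetti2024b} (via freeness of $\pi_{*,*}(\mathcal{A}(p)/\tau^{p-1})$ over $\mathbb{F}_p[\tau]/\tau^{p-1}$) to obtain the tensor-product $E^2$-page. Your explicit justification of the first-quadrant claim via connectivity of $MHH(M\mathbb{Z}/p)/\tau^{p-1}$ is a small addition the paper leaves implicit.
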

\vspace{1em}

We will analyse first what happens for $p=2$, then we will look at odd primes. The two arguments presented are essentially identical: only minor differences distinguish the spectral sequences in the two cases. However, when $p$ is even there are two major advantages: an easier ring structure (there are no multiples of $\tau$; divided powers in characteristic $2$ square to zero) and, as the degrees involved are much smaller, it is easier to understand visually the behaviour of the spectral sequence. At the risk of repeating ourselves, we keep each proof independent from the other.

Regardless of the prime number considered, our spectral sequence presents some key features, which we will use more or less implicitly throughout the whole paper:
\begin{itemize}
    \item The $E^{\infty}$ page is concentrated in degrees $(0,0,*)$, hence anything in other degrees must vanish.
    \item The spectral sequence is first quadrant, hence the $d^q$ differential acts trivially on the modules $E^q_{s,t,*}$ with $s<q$. In particular, any differential is trivial on the elements of the zeroth column.
    \item Combining the two above, the modules $E^q_{s,t,*}$ with $s<q$, $t<q-1$ and $(s,t) \neq (0,0)$ must be trivial: any homogeneous component has to vanish before a certain page.
    \item On the other hand, in degree $(0,0,*)$ we have isomorphism:
    \[
        E^2_{0,0,*} \cong E^3_{0,0,*} \cong \ldots \cong E^{\infty}_{0,0,*} \cong \pi_{0,*}((M\mathbb{Z}/p)/\tau^{p-1}) \cong \mathbb{F}_p[\tau]/\tau^{p-1}
    \]
    Observe that $\pi_{0,*}(\mathcal{A}(p)/\tau^{p-1}) \cong \mathbb{F}_p[\tau]/\tau^{p-1}$ is a free $\mathbb{F}_p[\tau]/\tau^{p-1}$-module on a single generator (which we identify with the identity $1$). As:
    \[
        E^2_{0,0,*} \cong \pi_{0,*}(MHH(M\mathbb{Z}/p)/\tau^{p-1}) \otimes_{\mathbb{F}_p[\tau]/\tau^{p-1}} \pi_{0,*}(\mathcal{A}(p)/\tau^{p-1}) 
    \]
    it follows that also $\pi_{0,*}(MHH(M\mathbb{Z}/p)/\tau^{p-1})$ is a free $\mathbb{F}_p[\tau]/\tau^{p-1}$-module on a single generator (again identified with the identity $1$). Because of the tensor product decomposition of the $E^2$-page, we get:
    \[
        E^2_{0,*,*}\cong \pi_{*,*}(\mathcal{A}(p)/\tau^{p-1})
    \]
    \item Finally, the spectral sequence is multiplicative, in the sense that the differentials satisfy a Leibniz rule, which, given our truncation, is weighted on the degree of the classes involved. This means that given two elements $a$ and $b$, we have:
    \[
        d(ab)=d(a)b + (-1)^{deg(a)}a d(b)
    \]
    where $deg(a)$ is the degree of $a$.
\end{itemize}

\begin{rmk}
    The symbols $\upgamma_i (\mu_j)$ will always denote the $i$-th divided power of the element $\mu_j$- we will later clarify what this means in positive characteristic. In particular, throughout the whole document, $\upgamma_i$ is never an element. Moreover, we always identify $\upgamma_0(\mu_j)=1$ and $\upgamma_1 (\mu_j)=\mu_j$ for any $\mu_j$. 
\end{rmk}
\clearpage
\relax


\input{diagrams/picture2}
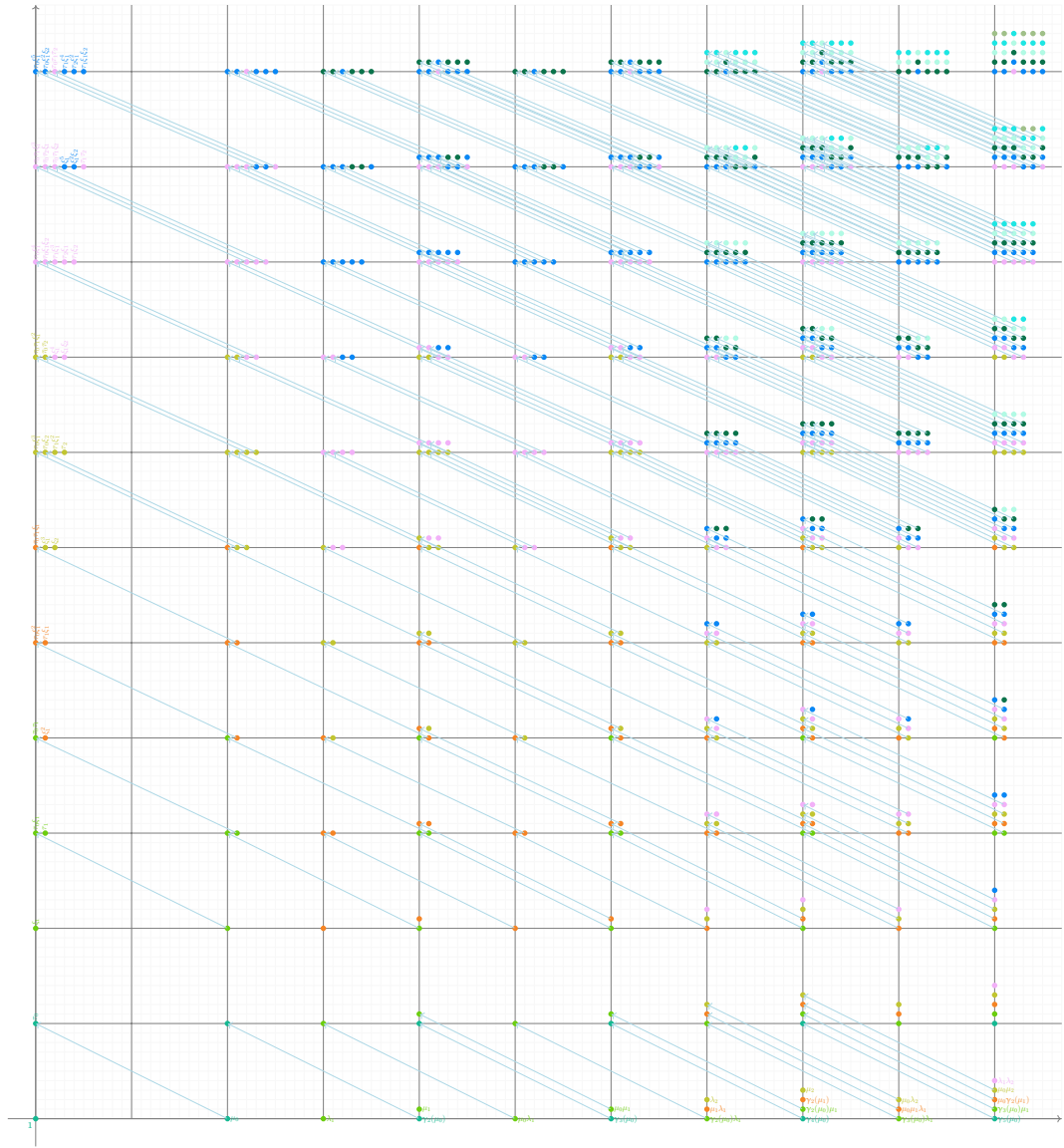
\begin{figure}
\centering
\adjustbox{max width= \textwidth, center}{
\pgfsetshortenend{0.6pt}
\pgfsetshortenstart{0.6pt}
\begin{tikzpicture}[scale=2\textwidth/117cm,line width=1pt]

\draw[step=1cm , almostwhite, very thin] (0,0) grid (107,117);
\draw[step=10cm,gray,very thin] (0,0) grid (105,115);
\draw[gray, line width=0.35pt,->](-3,0)--(105,0);
\draw[gray, line width=0.35pt,->] (0,-3)--(0,115);

{\filldraw[weight0]
(0,0) circle (6pt) node[below left, scale=0.5]{ 1}
;}
{\filldraw[weight1]
(0.0,20) circle (6pt) node[rotate=90, right, scale=0.5]{$\xi_1$}
(1,30) circle (6pt) node[rotate=90, right, scale=0.5]{$\tau_1$}
;}
{\filldraw[weight2]
(1,40) circle (6pt) node[rotate=90, right, scale=0.5]{$\xi_1^2$}
(1,50) circle (6pt) node[rotate=90, right, scale=0.5]{$\tau_1\xi_1$}
;}
{\filldraw[weight3]
(1,60) circle (6pt) node[rotate=90, right, scale=0.5]{$\xi_1^3$}
(2,60) circle (6pt) node[rotate=90, right, scale=0.5]{$\xi_2$}
(2,70) circle (6pt) node[rotate=90, right, scale=0.5]{$\tau_1\xi_1^2$}
(3,70) circle (6pt) node[rotate=90, right, scale=0.5]{$\tau_2$}
;}
{\filldraw[weight4]
(2,80) circle (6pt) node[rotate=90, right, scale=0.5]{$\xi_1^4$}
(3,80) circle (6pt) node[rotate=90, right, scale=0.5]{$\xi_1\xi_2$}
(2,90) circle (6pt) node[rotate=90, right, scale=0.5]{$\tau_1\xi_1^3$}
(3,90) circle (6pt) node[rotate=90, right, scale=0.5]{$\tau_2\xi_1$}
(4,90) circle (6pt) node[rotate=90, right, scale=0.5]{$\tau_1\xi_2$}
(5,100) circle (6pt) node[rotate=90, right, scale=0.5]{$\tau_1\tau_2$}
;}
{\filldraw[weight5]
( 3 , 100 ) circle (6pt) node[rotate=90, right, scale=0.5]{$\xi_1^5$}
( 4 , 100 ) circle (6pt) node[rotate=90, right, scale=0.5]{$\xi_1^2 \xi_2$}
( 3 , 110 ) circle (6pt) node[rotate=90, right, scale=0.5]{$\tau_1 \xi_1^4$}
( 4 , 110 ) circle (6pt) node[rotate=90, right, scale=0.5]{$\tau_2\xi_1^2$}
( 5 , 110 ) circle (6pt) node[rotate=90, right, scale=0.5]{$\tau_1\xi_1\xi_2$}
;}

{\filldraw[weight1]
(30,0) circle (6pt) node[anchor=west, scale=0.5]{$\lambda_1$}
;}
{\filldraw[weight2]
(30,20) circle (6pt)
(31,30) circle (6pt)
;}
{\filldraw[weight3]
(31,40) circle (6pt)
(31,50) circle (6pt)
;}
{\filldraw[weight4]
(31,60) circle (6pt)
(32,60) circle (6pt)
(32,70) circle (6pt)
(33,70) circle (6pt)
;}
{\filldraw[weight5]
(32,80) circle (6pt)
(33,80) circle (6pt)
(32,90) circle (6pt)
(33,90) circle (6pt)
(34,90) circle (6pt)
(35,100) circle (6pt)
;}
{\filldraw[weight6]
(33,100) circle (6pt)
(34,100) circle (6pt)
(33,110) circle (6pt)
(34,110) circle (6pt)
(35,110) circle (6pt)
;}
\draw[colarrow, line width=0.4pt,->](30,0)--(0,20);
\draw[colarrow, line width=0.4pt,->](30,20)--(1,40);
\draw[colarrow, line width=0.4pt,->](31,30)--(1,50);
\draw[colarrow, line width=0.4pt,->](31,40)--(1,60);
\draw[colarrow, line width=0.4pt,->](31,50)--(2,70);
\draw[colarrow, line width=0.4pt,->](31,60)--(2,80);
\draw[colarrow, line width=0.4pt,->](32,60)--(3,80);
\draw[colarrow, line width=0.4pt,->](32,70)--(2,90);
\draw[colarrow, line width=0.4pt,->](33,70)--(3,90);
\draw[colarrow, line width=0.4pt,->](32,80)--(3,100);
\draw[colarrow, line width=0.4pt,->](33,80)--(4,100);
\draw[colarrow, line width=0.4pt,->](32,90)--(3,110);
\draw[colarrow, line width=0.4pt,->](33,90)--(4,110);
\draw[colarrow, line width=0.4pt,->](34,90)--(5,110);

{\filldraw[weight1]
(40,1) circle (6pt) node[anchor=west, scale=0.5]{$\mu_1$}
;}
{\filldraw[weight2]
(40,21) circle (6pt)
(41,31) circle (6pt)
;}
{\filldraw[weight3]
(41,41) circle (6pt)
(41,51) circle (6pt)
;}
{\filldraw[weight4]
(41,61) circle (6pt)
(42,61) circle (6pt)
(42,71) circle (6pt)
(43,71) circle (6pt)
;}
{\filldraw[weight5]
(42,81) circle (6pt)
(43,81) circle (6pt)
(42,91) circle (6pt)
(43,91) circle (6pt)
(44,91) circle (6pt)
(45,101) circle (6pt)
;}
{\filldraw[weight6]
(43,101) circle (6pt)
(44,101) circle (6pt)
(43,111) circle (6pt)
(44,111) circle (6pt)
(45,111) circle (6pt)
;}

{\filldraw[weight2]
(70,1) circle (6pt) node[anchor=west, scale=0.5]{$\mu_1 \lambda1$}
;}
{\filldraw[weight3]
(70,21) circle (6pt)
(71,31) circle (6pt)
;}
{\filldraw[weight4]
(71,41) circle (6pt)
(71,51) circle (6pt)
;}
{\filldraw[weight5]
(71,61) circle (6pt)
(72,61) circle (6pt)
(72,71) circle (6pt)
(73,71) circle (6pt)
;}
{\filldraw[weight6]
(72,81) circle (6pt)
(73,81) circle (6pt)
(72,91) circle (6pt)
(73,91) circle (6pt)
(74,91) circle (6pt)
(75,101) circle (6pt)
;}
{\filldraw[weight7]
(73,101) circle (6pt)
(74,101) circle (6pt)
(73,111) circle (6pt)
(74,111) circle (6pt)
(75,111) circle (6pt)
;}
\draw[colarrow, line width=0.4pt,->](70,1)--(40,21);
\draw[colarrow, line width=0.4pt,->](70,21)--(41,41);
\draw[colarrow, line width=0.4pt,->](71,31)--(41,51);
\draw[colarrow, line width=0.4pt,->](71,41)--(41,61);
\draw[colarrow, line width=0.4pt,->](71,51)--(42,71);
\draw[colarrow, line width=0.4pt,->](71,61)--(42,81);
\draw[colarrow, line width=0.4pt,->](72,61)--(43,81);
\draw[colarrow, line width=0.4pt,->](72,71)--(42,91);
\draw[colarrow, line width=0.4pt,->](73,71)--(43,91);
\draw[colarrow, line width=0.4pt,->](72,81)--(43,101);
\draw[colarrow, line width=0.4pt,->](73,81)--(44,101);
\draw[colarrow, line width=0.4pt,->](72,91)--(43,111);
\draw[colarrow, line width=0.4pt,->](73,91)--(44,111);
\draw[colarrow, line width=0.4pt,->](74,91)--(45,111);

{\filldraw[weight3]
(70,2) circle (6pt) node[anchor=west, scale=0.5]{$\lambda_2$}
;}
{\filldraw[weight4]
(70,22) circle (6pt)
(71,32) circle (6pt)
;}
{\filldraw[weight5]
(71,42) circle (6pt)
(71,52) circle (6pt)
;}
{\filldraw[weight6]
(71,62) circle (6pt)
(72,62) circle (6pt)
(72,72) circle (6pt)
(73,72) circle (6pt)
;}
{\filldraw[weight7]
(72,82) circle (6pt)
(73,82) circle (6pt)
(72,92) circle (6pt)
(73,92) circle (6pt)
(74,92) circle (6pt)
(75,102) circle (6pt)
;}
{\filldraw[weight8]
(73,102) circle (6pt)
(74,102) circle (6pt)
(73,112) circle (6pt)
(74,112) circle (6pt)
(75,112) circle (6pt)
;}

{\filldraw[weight2]
(80,2) circle (6pt) node[anchor=west, scale=0.5]{$\upgamma_2 (\mu_1)$}
;}
{\filldraw[weight3]
(80,22) circle (6pt)
(81,32) circle (6pt)
;}
{\filldraw[weight4]
(81,42) circle (6pt)
(81,52) circle (6pt)
;}
{\filldraw[weight5]
(81,62) circle (6pt)
(82,62) circle (6pt)
(82,72) circle (6pt)
(83,72) circle (6pt)
;}
{\filldraw[weight6]
(82,82) circle (6pt)
(83,82) circle (6pt)
(82,92) circle (6pt)
(83,92) circle (6pt)
(84,92) circle (6pt)
(85,102) circle (6pt)
;}
{\filldraw[weight7]
(83,102) circle (6pt)
(84,102) circle (6pt)
(83,112) circle (6pt)
(84,112) circle (6pt)
(85,112) circle (6pt)
;}

{\filldraw[weight3]
(80,3) circle (6pt) node[anchor=west, scale=0.5]{$\mu_2$}
;}
{\filldraw[weight4]
(80,23) circle (6pt)
(81,33) circle (6pt)
;}
{\filldraw[weight5]
(81,43) circle (6pt)
(81,53) circle (6pt)
;}
{\filldraw[weight6]
(81,63) circle (6pt)
(82,63) circle (6pt)
(82,73) circle (6pt)
(83,73) circle (6pt)
;}
{\filldraw[weight7]
(82,83) circle (6pt)
(83,83) circle (6pt)
(82,93) circle (6pt)
(83,93) circle (6pt)
(84,93) circle (6pt)
(85,103) circle (6pt)
;}
{\filldraw[weight8]
(83,103) circle (6pt)
(84,103) circle (6pt)
(83,113) circle (6pt)
(84,113) circle (6pt)
(85,113) circle (6pt)
;}
{\filldraw[weight4]
(100,4) circle (6pt) node[anchor=west, scale=0.5]{$\lambda_1 \lambda_2$}
;}
{\filldraw[weight5]
(100,24) circle (6pt)
(101,34) circle (6pt)
;}
{\filldraw[weight6]
(101,44) circle (6pt)
(101,54) circle (6pt)
;}
{\filldraw[weight7]
(101,64) circle (6pt)
(102,64) circle (6pt)
(102,74) circle (6pt)
(103,74) circle (6pt)
;}
{\filldraw[weight8]
(102,84) circle (6pt)
(103,84) circle (6pt)
(102,94) circle (6pt)
(103,94) circle (6pt)
(104,94) circle (6pt)
(105,104) circle (6pt)
;}
{\filldraw[weight9]
(103,104) circle (6pt)
(104,104) circle (6pt)
(103,114) circle (6pt)
(104,114) circle (6pt)
(105,114) circle (6pt)
;}
\draw[colarrow, line width=0.4pt,->](100,4)--(70,22);
\draw[colarrow, line width=0.4pt,->](100,24)--(71,42);
\draw[colarrow, line width=0.4pt,->](101,34)--(71,52);
\draw[colarrow, line width=0.4pt,->](101,44)--(71,62);
\draw[colarrow, line width=0.4pt,->](101,54)--(72,72);
\draw[colarrow, line width=0.4pt,->](101,64)--(72,82);
\draw[colarrow, line width=0.4pt,->](102,64)--(73,82);
\draw[colarrow, line width=0.4pt,->](102,74)--(72,92);
\draw[colarrow, line width=0.4pt,->](103,74)--(73,92);
\draw[colarrow, line width=0.4pt,->](102,84)--(73,102);
\draw[colarrow, line width=0.4pt,->](103,84)--(74,102);
\draw[colarrow, line width=0.4pt,->](102,94)--(73,112);
\draw[colarrow, line width=0.4pt,->](103,94)--(74,112);
\draw[colarrow, line width=0.4pt,->](104,94)--(75,112);

\end{tikzpicture}}
\caption{The $E^3$ page of the spectral sequence for $p=2$. Some arrows, exceeding the boundaries of the figure, are not displayed.\\
{\color{weight0} weight 0}, 
{\color{weight1} weight 1}, 
{\color{weight2} weight 2}, 
{\color{weight3} weight 3}, 
{\color{weight4} weight 4}, 
{\color{weight5} weight 5}, 
{\color{weight6} weight 6}, 
{\color{weight7} weight 7}, 
{\color{weight8} weight 8},
{\color{weight9} weight 9}, }

\end{figure}
\begin{figure}
\centering
\adjustbox{max width= \textwidth, center}{
\pgfsetshortenend{0.6pt}
\pgfsetshortenstart{0.6pt}
\begin{tikzpicture}[scale=2\textwidth/117cm,line width=1pt]

\draw[step=1cm , almostwhite, very thin] (0,0) grid (107,117);
\draw[step=10cm, gray, very thin] (0,0) grid (105,115);
\draw[gray, line width=0.35pt,->](-3,0)--(105,0);
\draw[gray, line width=0.35pt,->] (0,-3)--(0,115);

{\filldraw[weight0]
(0,0) circle (6pt) node[anchor=north]{1}
;}
{\filldraw[weight1]
(1,30) circle (6pt) node[rotate=90, right, scale=0.5]{$\tau_1$}
;}
{\filldraw[weight3]
(2,60) circle (6pt) node[rotate=90, right, scale=0.5]{$\xi_2$}
(3,70) circle (6pt) node[rotate=90, right, scale=0.5]{$\tau_2$}
;}
{\filldraw[weight4]
(4,90) circle (6pt) node[rotate=90, right, scale=0.5]{$\tau_1\xi_2$}
(5,100) circle (6pt) node[rotate=90, right, scale=0.5]{$\tau_1\tau_2$}
;}

{\filldraw[weight1]
(40,1) circle (6pt) node[anchor=west, scale=0.5]{$\mu_1$}
;}
{\filldraw[weight2]
(41,31) circle (6pt)
;}
{\filldraw[weight4]
(42,61) circle (6pt)
(43,71) circle (6pt)
;}
{\filldraw[weight5]
(44,91) circle (6pt)
(45,101) circle (6pt)
;}
\draw[colarrow, line width=0.4pt,->](40,1)--(1,30);
\draw[colarrow, line width=0.4pt,->](42,61)--(4,90);
\draw[colarrow, line width=0.4pt,->](43,71)--(5,100);

{\filldraw[weight3]
(70,2) circle (6pt) node[anchor=west, scale=0.5]{$\lambda_2$}
;}
{\filldraw[weight4]
(71,32) circle (6pt)
;}
{\filldraw[weight6]
(72,62) circle (6pt)
(73,72) circle (6pt)
;}
{\filldraw[weight7]
(74,92) circle (6pt)
(75,102) circle (6pt)
;}

{\filldraw[weight2]
(80,2) circle (6pt) node[anchor=west, scale=0.5]{$\upgamma_2 (\mu_1)$}
;}
{\filldraw[weight3]
(81,32) circle (6pt)
;}
{\filldraw[weight5]
(82,62) circle (6pt)
(83,72) circle (6pt)
;}
{\filldraw[weight6]
(84,92) circle (6pt)
(85,102) circle (6pt)
;}
\draw[colarrow, line width=0.4pt,->](80,2)--(41,31);
\draw[colarrow, line width=0.4pt,->](82,62)--(44,91);
\draw[colarrow, line width=0.4pt,->](83,72)--(45,101);

{\filldraw[weight3]
(80,3) circle (6pt) node[anchor=west, scale=0.5]{$\mu_2$}
;}
{\filldraw[weight4]
(81,33) circle (6pt)
;}
{\filldraw[weight6]
(82,63) circle (6pt)
(83,73) circle (6pt)
;}
{\filldraw[weight7]
(84,93) circle (6pt)
(85,103) circle (6pt)
;}
\end{tikzpicture}}
\caption{The $E^4$ page of the spectral sequence for $p=2$. Some arrows, exceeding the boundaries of the figure, are not displayed.\\
{\color{weight0} weight 0}, 
{\color{weight1} weight 1}, 
{\color{weight2} weight 2}, 
{\color{weight3} weight 3}, 
{\color{weight4} weight 4}, 
{\color{weight5} weight 5}, 
{\color{weight6} weight 6}, 
{\color{weight7} weight 7}, 
{\color{weight8} weight 8},
{\color{weight9} weight 9}, }

\end{figure}
\relax



\clearpage

\section{\texorpdfstring{$MHH(M\mathbb{Z}/2)/\tau$}{MHH(MZ/2)/tau}}

\subsection{General considerations}

To reconstruct the homotopy groups of $MHH(M\mathbb{Z}/2)/\tau$ we will determine the behaviour of the whole spectral sequence.
We will show that each page decomposes as a tensor product (over $\mathbb{F}_2$) of two $\mathbb{F}_2$-algebras, namely a subalgebra of $ \pi_{*,*}(MHH(M\mathbb{Z}/2)/\tau)$ with a non-canonical splitting (which we will identify with a quotient of $ \pi_{*,*}(MHH(M\mathbb{Z}/2)/\tau)$) and a quotient algebra of the mod $\tau$ dual motivic Steenrod algebra. This happens because at each page $E^k$, we can associate a set of generators of $E^k_{0,*,*}$ with the source of all the non-trivial differentials and a set of generators of $E^k_{*,0,*}$ with the image; this produces a pattern that affects similarly all the tensor product classes so that the next page preserves the decomposition as tensor of a horizontal $\mathbb{F}_2$ algebra times a vertical one. We will moreover see that non-trivial differentials only occur at pages $E^{2^i}$ or $E^{2^{i+1}-1}$, for all integers $i \geq  1$. The spectral sequence will allow us to recover:

\begin{thm}\label{thm:MHH(MZ/2)/tau}
There is an isomorphism of graded rings:
\[
    \pi_{*,*}(MHH(M\mathbb{Z}/2)/\tau) \cong \bigotimes_{i \in \mathbb{N}} \Gamma_{\mathbb{F}_2}(\mu_i) \otimes \Lambda_{\mathbb{F}_2}(\lambda_{i+1})
\]
where:
\begin{gather*}
    \Gamma_{\mathbb{F}_2}(\mu_i) \cong \mathbb{F}_2[\upgamma_{2^j}(\mu_i)]_{j \geq 0}/\langle \upgamma_{2^j}(\mu_i)^2 \rangle\\
    \Lambda_{\mathbb{F}_2}(\lambda_{i}) \cong \mathbb{F}_2[\xi_{i}]/\langle \xi_{i}^2\rangle\\
    |\upgamma_{2^j}(\mu_i)|=(2^{i+1+j},2^j(2^i-1)) \qquad |\lambda_i|=(2^{i+1}-1,2^i-1).
\end{gather*}
The tensor product is taken in $\mathbb{F}_2$ algebras.
    
\end{thm}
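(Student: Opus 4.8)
The plan is to run the first-quadrant multiplicative spectral sequence of Proposition~\ref{prop:spectral.seq.mod.tau}, whose $E^2$-page is $\pi_{*,*}(MHH(M\mathbb{Z}/2)/\tau)\otimes_{\mathbb{F}_2}\pi_{*,*}(\mathcal{A}(2)/\tau)$ and whose abutment is $\pi_{*,*}((M\mathbb{Z}/2)/\tau)\cong\mathbb{F}_2$ concentrated in bidegree $(0,0)$, and to reverse-engineer the bottom row $E^2_{*,0,*}=\pi_{*,*}(MHH(M\mathbb{Z}/2)/\tau)$ from the single requirement that the sequence collapse onto $\mathbb{F}_2$. Recall that $\pi_{*,*}(\mathcal{A}(2)/\tau)\cong\mathbb{F}_2[\xi_1,\xi_2,\dots]\otimes\Lambda_{\mathbb{F}_2}(\tau_0,\tau_1,\dots)$ is free graded-commutative on the $\xi_i$, with $|\xi_i|=(2^{i+1}-2,2^i-1)$, and on the $\tau_j$, with $|\tau_j|=(2^{j+1}-1,2^j-1)$; I will use throughout that every $d^r$ preserves the weight grading and lowers the total degree $s+t$ by one. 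Heuristically the answer is forced by Koszul duality — a polynomial generator $\xi_i$ is "resolved" by an exterior class, an exterior generator $\tau_j$ by a divided-power class — but the point of the Greenlees spectral sequence is to make this rigorous without appealing to formality of $\mathcal{A}(2)/\tau$.

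First I would organise the computation by an induction on the page, maintaining the hypothesis that $E^k$ splits as a tensor product $A^k\otimes_{\mathbb{F}_2}B^k$, where $B^k$ is a quotient of $\pi_{*,*}(\mathcal{A}(2)/\tau)$ by an ideal generated by some of the $\xi$'s and $\tau$'s (hence still free graded-commutative on the survivors), and $A^k$ is a "horizontal" quotient of $\pi_{*,*}(MHH(M\mathbb{Z}/2)/\tau)$ of the form $\Lambda_{\mathbb{F}_2}(\lambda\text{'s})\otimes\Gamma_{\mathbb{F}_2}(\mu\text{'s})$. The crux is that the lowest-degree surviving generator $g$ of $B^k$ cannot support a differential and cannot survive to $E^\infty$, and that, by the weight- and total-degree bookkeeping, the only way it can be hit is by a single new bottom-row class $h\in\pi_{*,*}(MHH(M\mathbb{Z}/2)/\tau)$ with $d^{k}(h)=g$ on the appropriate page. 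If $g=\xi_i$ this forces a Koszul pair $\Lambda(\lambda_i)\otimes\mathbb{F}_2[\xi_i]$ with $d(\lambda_i)=\xi_i$, which is acyclic; if $g=\tau_j$ it forces a divided-power Koszul pair $\Gamma(\mu_j)\otimes\Lambda(\tau_j)$ with $d\,\upgamma_n(\mu_j)=\upgamma_{n-1}(\mu_j)\tau_j$, which is also acyclic. The Leibniz rule, together with the divided-power Leibniz rule, then propagates this differential across the whole tensor factorisation, so $E^{k+1}\cong A^{k+1}\otimes B^{k+1}$ with exactly one generator removed from each side. Iterating cancels $\tau_0$ first (against $\mu_0$), then $\xi_1$ (against $\lambda_1$), then $\tau_1$ (against $\mu_1$), then $\xi_2,\tau_2,\dots$

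The bookkeeping of degrees and pages is then mechanical: a pure $MHH$-class $h$ lies at $(s,0)$ with $s$ its topological degree, a pure $\mathcal{A}(2)/\tau$-class lies at $(0,t)$ with $t$ its topological degree, and $d^r\colon(s,t)\to(s-r,t+r-1)$, so a differential pairing $\lambda_i$ with $\xi_i$ must be $d^{2^{i+1}-1}$ (since $|\lambda_i|=(2^{i+1}-1,2^i-1)$) and one pairing $\mu_j$ with $\tau_j$ must be $d^{2^{j+1}}$ (since $|\mu_j|=(2^{j+1},2^j-1)$); these are precisely the pages $E^{2^i}$ and $E^{2^{i+1}-1}$ for $i\ge 1$, and no other page carries a nontrivial differential. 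Since $\pi_{*,*}(\mathcal{A}(2)/\tau)$ has exactly the generators $\{\xi_i\}_{i\ge 1}\cup\{\tau_j\}_{j\ge 0}$, each cancelled once, reading off $A^2=\pi_{*,*}(MHH(M\mathbb{Z}/2)/\tau)$ yields $\bigotimes_{i\in\mathbb{N}}\Gamma_{\mathbb{F}_2}(\mu_i)\otimes\Lambda_{\mathbb{F}_2}(\lambda_{i+1})$ with the stated bidegrees. As a consistency check this can be confronted with the $\tau$-inverted computation of \cite{Mocchetti2024b}, in the spirit of \cite[Section~3.3]{DHOO2022}.

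The step I expect to be the main obstacle is justifying the divided-power structure on the classes $\mu_i$: that $\mu_i^2=0$, that the $\upgamma_{2^j}(\mu_i)$ really occur as new, independent generators (so that $\mu_i$ is not an ordinary polynomial generator, which would change the whole shape), and that the differentials act on them by the divided-power Leibniz rule so the relevant sub-DGA is acyclic. Each of these has to be extracted from the spectral sequence itself: $\mu_i^2$ is automatically a $d^{2^{i+1}}$-cycle in characteristic $2$ and, by a weight and total-degree count, can neither receive nor support a later differential, so it must vanish in $\pi_{*,*}(MHH(M\mathbb{Z}/2)/\tau)$; the class $\upgamma_2(\mu_i)$ is then forced into existence to cancel $\mu_i\tau_i$ on page $E^{2^{i+1}}$, and the higher divided powers appear inductively in the same way. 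Making this airtight requires a careful total-degree-by-total-degree dimension count — the weight grading is essential here to pin down which class can hit which — and this is also exactly the place where the odd-primary argument differs (there the $\tau$-multiples survive and the relations on the $\upgamma$'s are milder), so I would keep the two proofs separate as the paper proposes.
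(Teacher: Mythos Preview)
Your outline is correct and follows the same architecture as the paper: run the spectral sequence of Proposition~\ref{prop:spectral.seq.mod.tau}, maintain inductively a tensor decomposition $E^k\cong A^k\otimes_{\mathbb{F}_2}B^k$ with $B^k$ a quotient of $\pi_{*,*}(\mathcal{A}(2)/\tau)$, and read off one Koszul pair per page, with nontrivial differentials only on pages $E^{2^i}$ and $E^{2^{i+1}-1}$. You also correctly locate the main obstacle in the divided-power structure on the $\mu_j$.

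Where your sketch underestimates the difficulty is in the phrase ``by a weight and total-degree count, can neither receive nor support a later differential''. This is accurate for $\mu_0$, whose divided powers all have weight zero while every nonunit in $E^{2}_{0,*,*}$ above row~$1$ has positive weight, so the argument is one line. For $j\geq 1$ it is not: $\upgamma_{2^i-1}(\mu_j)\tau_j$ has weight $2^i(2^j-1)$, and there are many classes of that weight in later columns, so a single degree/weight comparison does not suffice. The paper's actual mechanism is not a dimension count but a ratio bound (Lemma~\ref{lmm:d/w.for.E.2.j+1}): every homogeneous class in $E^{2^{j+1}}_{0,*,*}$ has degree-to-weight ratio between $2$ and $2+\tfrac{1}{2^j-1}$. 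One then supposes $\upgamma_{2^i-1}(\mu_j)\tau_j$ (respectively $(\upgamma_{2^{i-1}}(\mu_j))^2$) supports a longer differential, factors the target as $\alpha_1\omega_1$ with $\alpha_1$ on the bottom row and $\omega_1$ on the zeroth column, observes that $\alpha_1$ must itself support a long differential (else the column over it would already have died at $E^{2^{j+1}}$), and iterates to produce a chain $d^{k_1},\ldots,d^{k_n}$ ending on the zeroth column. Summing the ratio inequalities over this chain forces $n\geq 2^i-1$, which is incompatible with the lower bounds $k_l\geq 2^{j+2}-1$, $w_l\geq 2^{j+1}-1$. This chain-of-differentials contradiction, together with the ratio lemma, is the missing technical ingredient in your sketch; a straightforward ``total-degree-by-total-degree'' count would be circular, since the dimensions of $\pi_{*,*}(MHH(M\mathbb{Z}/2)/\tau)$ in the relevant range are precisely what you are trying to determine.
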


A first easy but useful observation is that there are no elements of negative weight in the $E^2$-page, and, in particular, in $ \pi_{*,*}(MHH(M\mathbb{Z}/2)/\tau)$. This can be seen by induction; there are in fact no elements of negative weight in the zeroth column, corresponding to the mod $\tau$ dual motivic Steenrod algebra, of which we have an explicit description. Then, the presence of an element of a certain weight somewhere implies the existence of elements of lower weight in a previous column, as the $d^2$ differential preserves the weight. As the base ring (the field $\mathbb{F}_2$) is homogeneous of weight $0$, assuming the presence of elements of negative weight somewhere leads to a contradiction.

The general argument proceeds by induction on the index of the page; however, before formulating a precise induction hypothesis, we look at what happens in low-indexed pages, to get a hint of the general picture.

\subsection{\texorpdfstring{The $E^2$ page}{The E2 page}}

Recall the algebra description of the dual mod $\tau$ motivic Steenrod algebra:
\[
\pi_{*,*} (\mathcal{A}(2)/\tau) \cong \mathbb{F}_2[\tau_i, \xi_{i+1}]_{i  \geq  0}/\langle \tau_i^2 \rangle
\]
with $|\tau_i|=(2^{i+1}-1, 2^i-1)$, $|\xi_i|=(2^{i+1}-2, 2^i-1)$. In particular, we notice that elements of weight zero can exist only in the zeroth and first rows. This fact will help to make the argument for the $E^2$ page much simpler than that for the other $E^{2^i}$ pages.

We can conclude immediately that the column with index 1 has to be empty, as any element in $E^2_{1,0,*}$ could not support any non-trivial differential and permanent cycles outside degree $(0,0,*)$ are incompatible with the convergence hypothesis. So 
\[
    \pi_{1,*}(MHH(M\mathbb{Z}/2)/\tau) \cong E^2_{1,0,*}\cong 0.
\]
Due to the decomposition as a tensor product of rings of the $E^2$-page, $E^2_{1,*,*} \cong 0$.

Next, notice that:
\[
    E^2_{0,1,*}\cong \pi_{1,*}(\mathcal{A}(2)/\tau )\cong \mathbb{F}_2\{\tau_0\}.
\]
To avoid that $\tau_0$ becomes a permanent cycle, there must be some element $\mu_0 \in E^2_{2,0,0}$ with $d^2(\mu_0)= \tau_0$. 

Now consider $\mu_0^2$. We have that $d^2(\mu_0^2)=0$ by characteristic; moreover, there are no other weight 0 elements in higher rows, so all higher differentials applied to $\mu_0^2$ must vanish as well. So, as there cannot be permanent cycles outside degree $(0,0,*)$, $\mu_0^2=0$. 

On the other hand, also $d^2(\mu_0 \tau_0)=\tau_0^2=0$, and longer differentials would exit the first quadrant; to avoid that $\mu_0 \tau_0$ produces a permanent cycle, we need an element $\upgamma_2 (\mu_0) \in E^2_{4,0,0}$ with $d^2(\upgamma_2 (\mu_0))= \mu_0 \tau_0$. As before, $d^2((\upgamma_2 (\mu_0))^2)=0$ by characteristic, and since there are no elements of weight zero in higher rows, $(\upgamma_2 (\mu_0))^2=0$. 

Next, by simply applying the Leibniz rule, if we call $\upgamma_3 (\mu_0) = \mu_0 \upgamma_2 (\mu_0)$, then:
\[
    d^2(\upgamma_3 (\mu_0))=\upgamma_2 (\mu_0) \tau_0 + \mu_0 \mu_0 \tau_0= \upgamma_2 (\mu_0) \tau_0.
\]

One then proceeds inductively and identifies a subalgebra $\Gamma_{\mathbb{F}_2}(\mu_0) \subseteq \pi_{*,*} (MHH(M\mathbb{Z}/2)/\tau)$ of divided powers on $\mu_0$:
\[
    \Gamma_{\mathbb{F}_2}(\mu_0) \cong \mathbb{F}_2[\upgamma_{2^i}(\mu_0)]_{i  \geq  0}/\langle \upgamma_{2^i}(\mu_0)^2\rangle
\]
with $|\upgamma_{2^i}(\mu_0)| = (2^{i+1},0)$, 
so that, given any $j  \geq  0$, if one considers its binary expansion $j = \sum_{i \geq  0} a_i 2^i$, by defining:
\[
    \upgamma_j (\mu_0)=\prod_{i \geq  0} ( \upgamma_{2^i}(\mu_0))^{a_i}
\]
one gets:
\[
    d^2(\upgamma_j (\mu_0))= \upgamma_{j-1} (\mu_0) \tau_0.
\]
More in detail, this comes from the fact that $\tau_0$ is the sole class of weight 0 in the mod $\tau$ dual motivic Steenrod algebra; if one assumes to know $\Gamma_{\mathbb{F}_2}(\mu_0)$ (and the associated differentials) up to a certain $\upgamma_{j-1} (\mu_0)$, then the only possibility to kill $\upgamma_{j-1} (\mu_0) \tau_0$ compatibly with the weight will be a weight zero class on the zeroth horizontal line. This either comes as an algebraically independent class if $j=2^i$ for some positive integer $i$ or as a product of smaller degree divided powers (see lemma \ref{lmm:leibniz.on.div.pow} for some related discussion in this sense) for other values of $j$.

They also exhaust all classes supporting a non-trivial $d^2$ differential in $ \pi_{*,*}(MHH(M\mathbb{Z}/2)/\tau)$ up to a base change, in the sense that any class $\alpha \in  \pi_{*,*}(MHH(M\mathbb{Z}/2)/\tau)$ can always be written as a finite sum $\alpha= \sum_{i \geq  0} \upgamma_i (\mu_0)\beta_i$, with $d^2(\beta_i)=0$. Suppose this is not the case, by contradiction, and pick the $\alpha$ for which it is not possible with the lowest (first) degree; this must happen away from the origin, as we verified above that this thesis holds for small degrees. Then, as $d^2(\alpha)$ has a strictly smaller first degree than $\alpha$, we can write by assumption
\[
    d^2(\alpha)= \left (\sum_{i  \geq  0} \upgamma_i (\mu_0) \varepsilon_i \right ) \tau_0
\]
with $\varepsilon_i \in  \pi_{*,*}(MHH(M\mathbb{Z}/2)/\tau)$ such that $d^2(\varepsilon_i)=0$. Now consider $\sum_{i  \geq  0} \upgamma_{i+1} (\mu_0) \varepsilon_i$; one has, by the Leibniz rule:
\[
d^2\left(\sum_{i  \geq  0} \upgamma_{i+1} (\mu_0) \varepsilon_i\right)=\sum_{i  \geq  0} \upgamma_{i} \mu_0 \varepsilon_i \tau_0 = d^2(\alpha).
\]
So $\alpha-\sum_{i  \geq  0} \upgamma_{i+1} \mu_0 \varepsilon_i$ is a cycle for the $d^2$ differential, as we wanted to show. 

Observe also that the $d^2$-differentials originating from the zeroth row in the $E^2$ page surject onto the first row: any class there is a sum of elements of the form $\upgamma_i (\mu_0) \varepsilon \tau_0$, with $\varepsilon \in  \pi_{*,*}(MHH(M\mathbb{Z}/2)/\tau)$ not a multiple of $\upgamma_j (\mu_0)$ for any $j  \geq  1$.  As $d^2(\upgamma_{i+1} (\mu_0) \varepsilon)=\upgamma_{i+1} (\mu_0) d^2(\varepsilon)+\upgamma_i (\mu_0) \varepsilon \tau_0=\upgamma_i (\mu_0) \varepsilon \tau_0$ and the differential is a linear map, we get out assertion. This implies in particular that the entire horizontal line $E^3_{*,1,*}$ is null.

Observe that the same behaviour extends to the whole $E^2$-page. Due to the ring structure of the mod $\tau$ dual motivic Steenrod algebra, every line not multiple of $\tau_0$ presents a pattern of exiting $d^2$ differentials similar to the zeroth line; moreover, these will surject onto a corresponding $E^2_{0,*,*}$-module which is multiple of $\tau_0$.

Thus we can conclude that all nontrivial $d^2$-differentials originate from the divided powers $\upgamma_i (\mu_0)$ and have as image the ideal generated by $\tau_0$. More precisely, there are a commutative diagram:
\[
    \begin{tikzcd}
        E^3_{*,0,*} \arrow[r, hook] \arrow[rd, "\cong"] & 
        E^2_{*,0,*}\cong  \pi_{*,*}(MHH(M\mathbb{Z}/2)/\tau) \arrow[d, two heads] \\
        &  \pi_{*,*}(MHH(M\mathbb{Z}/2)/\tau)/\langle \upgamma_i (\mu_0) \rangle_{i  \geq  1}
    \end{tikzcd}    
\]
and an isomorphism:
\[
    E^3_{0,*,*} \cong \pi_{*,*}(\mathcal{A}(2)/\tau)/\langle \tau_0 \rangle
\]
which allow us to identify:
\[
    E^3_{\star,\bullet,*} \cong \pi_{\star,*}(MHH(M\mathbb{Z}/2)/\tau)/\langle \upgamma_i (\mu_0) \rangle_{i  \geq  1} \otimes_{\mathbb{F}_2} \pi_{\bullet,*}(\mathcal{A}(2)/\tau)/\langle \tau_0 \rangle
\]
Since the spectral sequence is first-quadrant, we can conclude that the injection:
\[
    \Gamma_{\mathbb{F}_2}(\mu_0) \hookrightarrow  \pi_{*,*}(MHH(M\mathbb{Z}/2)/\tau)
\]
must be an isomorphism in degrees smaller or equal to 2.
\subsection{\texorpdfstring{The $E^3$ page}{The E3 page}}

We proceed now with the next step: the $E^3$ page and the $d^3$ differential. 

By the conclusion of the previous step:
\[
    E^3_{0,*,*} \cong \pi_{*,*}(\mathcal{A}(2)/\tau)/\langle \tau_0 \rangle \cong \mathbb{F}_2[\tau_i, \xi_i]_{i  \geq  1}/\langle \tau_i^2\rangle
\]
In particular, the row $E^3_{*,1,*}$ is trivial. Moreover, we know that $E^3_{1,*,*}$ and $E^3_{2,*,*}$ are null as well. 

To avoid $\xi_1\in E^3_{0,2,1}$ being a permanent cycle, we must have a class $\lambda_1 \in \pi_{3,1}(MHH(M\mathbb{Z}/2)/\tau)$ surviving to the $E^3$ page and such that $d^3(\lambda_1)=\xi_1$; this differential surjects onto $E^3_{0,2,*}$, so, at the same time, not to produce permanent cycles in position $(3,0,*)$, we conclude that $E^3_{3,0,*}\cong \mathbb{F}_2\{ \lambda_1\}$. 

Observe that this differential behaviour propagates vertically to all classes in $E^3_{3,*,*}$, in the sense that, as they all are of the form $\lambda_1 \alpha$ for some $\alpha \in E^3_{0,*,*}$, we have $d^3(\lambda_1 \alpha)= \xi_1 \alpha$. There are no relations involving $\xi_1$ in $E^3_{0,*,*} \cong \pi_{*,*}(\mathcal{A}(2)/\tau)/\langle \tau_0 \rangle$, so none of this differentials is trivial, for non-trivial $\alpha$. 

We proceed to show that $\lambda_1^2=0$. One has $d^3(\lambda_1^2)=0$ for characteristic reasons. We see also that there is no other class it could hit with a longer differential, as the only other non-zero class with total degree 5, $\xi_1\tau_1 \in E^3_{0,5,2}$, vanishes after the $E^3$ page. So we conclude that $\lambda_1$ squares to zero.

We also have that $\lambda_1$ and its multiples are the source of all $d^3$ differentials, up to some base change. We can proceed as in the previous page:  suppose $\alpha \in E^3_{*,0,*}$ has a non-trivial $d^3$ differential; we wish to write $\alpha= \lambda_1 \beta + \delta$, with $d^3(\beta)=d^3(\delta)=0$ (in particular, $\beta \neq 0$). We have: $d^3(\alpha)= \xi_1 \beta \neq 0$ by hypothesis. Observe that $\beta$ must have trivial $d^3$ differential: if one had $d^3(\beta)=\xi_1 \omega \neq 0$, then $d^3(d^3(\alpha))=\xi_1^2 \omega \neq 0$, given the tensor decomposition of the  $E^3$ page. Consider then the element $\lambda_1 \beta$: one has $d^3(\lambda_1 \beta)= \xi_1 \beta= d^3(\alpha)$, so $\delta= \alpha -\lambda_1 \beta$ is a cycle for the  $d^3$ differential. Again from the tensor decomposition of $E^3$, we can conclude an analogous result for all the horizontal lines of the $E^3$ page. Hence we can identify the sources of all non-trivial $d^3$ differentials with the multiples of $\lambda_1$.

We can at the same time conclude that the whole line $E^3_{*,2,*}$ is wiped out by $d^3$ differentials: an element $\alpha \xi_1$ is either hit by $\lambda_1 \alpha$, if $\alpha$ has a trivial $d^3$ differential, or hits a multiple of $\xi_1^2$, if $d^3(\alpha) \neq 0 $. In fact,  we have just proven that in this second case we can identify $\alpha = \lambda_1 \beta+ \delta$, with $d^3(\beta)=d^3(\delta)=0$ and $\beta \neq 0$, so $d^3(\lambda_1 \beta \xi_1+ \delta \xi_1)=\beta \xi_1^2 \neq 0$. 
This can be applied as well to all the rows starting with a multiple of $\xi_1$, as there are no relations in $\pi_{*,*}(\mathcal{A}(2)/\tau)$ (hence in $E^3_{0,*,*} \cong \pi_{*,*}(\mathcal{A}(2)/\tau)/\langle \tau_0 \rangle$) that involve multiples of $\xi_1$. We conclude that in the homology of the $d^3$ differential the $E^3_{*,*,*}$ ideal generated by $\xi_1$ disappears.

We can conclude a tensor decomposition also for the $E^4$ page, in the following sense. We have a commutative triangle:
\[
    \begin{tikzcd}
        E^4_{*,0,*} \arrow[r, hook] \arrow[rd, "\cong"] & 
        E^3_{*,0,*} \arrow[d, two heads] \\
        & E^3_{*,0,*}/\langle \lambda_1 \rangle \cong  \pi_{*,*}(MHH(M\mathbb{Z}/2)/\tau)/\langle \upgamma_i (\tau_0),\, \lambda_1 \rangle_{i  \geq  1}
    \end{tikzcd}    
\]
and an isomorphism:
\[
    E^4_{0,*,*} \cong E^3_{0,*,*}/\langle \xi_1 \rangle \cong \pi_{*,*}(\mathcal{A}(2)/\tau)/\langle \tau_0, \xi_1 \rangle
\]
which allow us to identify:
\[
    E^4_{\star,\bullet,*} \cong E^4_{\star,0,*} \otimes_{\mathbb{F}_2} E^4_{0,\bullet,*} 
\]
Since the spectral sequence is first-quadrant, we can conclude that the injection:
\[
    \Gamma_{\mathbb{F}_2}(\mu_0) \otimes_{\mathbb{F}_2} \Lambda_{\mathbb{F}_2}(\lambda_1) \hookrightarrow  \pi_{*,*}(MHH(M\mathbb{Z}/2)/\tau)
\]
is an isomorphism in degrees smaller or equal to 3.
\subsection{\texorpdfstring{The $E^4$ page}{The E4 page}}

It is worth studying separately the next step as well. For the $E^4$ page, we will in fact present an argument that refines the one made for the $E^2$ page to a procedure that can be used in the general induction step. As we are about to see, the $d^4$-differentials are generated by elements of different, non-zero weights. This difference with the $d^2$-differentials is essentially what makes the proof of this step more complicated.

After the $E^3$ page, we notice in particular that the rows $E^4_{*,1,*}$ and $E^4_{*,2,*}$ and the columns $E^4_{1,*,*}$, $E^4_{2,*,*}$ and $E^4_{3,*,*}$ are trivial; the module $E^4_{0,3,*}$ is a $\mathbb{F}_2$ vector space on a single generator $\tau_1$ of weight $1$.

As in the $E^2$ page, we see that, to avoid permanent cycles, there must be a single class $\mu_1 \in E^4_{4,0,1}$ with $d^4(\mu_1)=\tau_1$; moreover, this class must square to zero: $d^4((\mu_1)^2)=0$ by characteristic, so if non-trivial, it had to support a longer differential. The weight of $\mu_1^2$ is two and the only non-trivial class with total degree 7 (the same as $d^4(\mu_1^2)$) in $E^4$ is $\tau_2$, which has weight 3, so there cannot be a $d^8$-differential from $\mu_1^2$ to $\tau_1$. Hence, $\mu_1^2=0$.  

Analogously, as $d^4(\mu_1 \tau_1)=\tau_1^2=0$, we need an element $\upgamma_2 (\mu_1) \in E^4_{8,0,2}$, with $d^4(\upgamma_2 (\mu_1))=\mu_1 \tau_1$. Now observe that, as any element in the column $E^4_{4,*,*}$ is a sum of monomials of the form $\mu_1 \tau_1^{\varepsilon} \beta$, with $\varepsilon=0$ or $1$ and $\beta \in E^4_{0,*,*}$ not a multiple of $\tau_1$, the two $d^4$ differentials we have just introduced, together with the Leibniz rule, are enough to make $E^5_{4,*,*} \cong 0$. In fact, those elements with $\varepsilon=0$ give rise to non-trivial differentials:
\[
	\mu_1 \beta \mapsto \tau_1 \beta
\]
while those with $\varepsilon=1$ are hit by a differential:
\[
	\upgamma_2 (\mu_1) \beta \mapsto \mu_1 \tau_1 \beta.
\]

We now produce an induction argument that introduces all the divided power classes $\upgamma_i (\mu_1)$ for $i \in \mathbb{N}$ and shows that they support non-trivial $d^4$ differentials $d^4(\upgamma_i (\mu_1))=\upgamma_{i-1} (\mu_1) \tau_1$. At the same time, we show that 
the divided powers $\upgamma_i (\mu_1)$ 
are all and the only sources of non-trivial $d^4$ differentials. We proceed by exploring higher and higher degrees on the horizontal line.

More precisely, let $f \in \mathbb{N}$, $f  \geq  2$. At the stage $f-1$, we suppose to know that 
for all $1 \leq l \leq f-1$ there exists a non zero class $\upgamma_l (\mu_1) \in E^4_{4l,0,l}$, such that $d^4(\upgamma_l (\mu_1))=\upgamma_{l-1} (\mu_1) \tau_1$ (recall that we identify $\upgamma_0 (\mu_1) =1$). These classes behave as divided powers in characteristic 2. In other words, we can identify the degree less than or equal to the $4(f-1)$ part of the sub-$\mathbb{F}_2$ algebra of $E^4_{*,0,*}$ generated by the $\upgamma_l (\mu_1)$ with the degree less than or equal to $4(f-1)$ part of the $\mathbb{F}_2$ algebra:
\[
	\mathbb{F}_2[\upgamma_{2^k}(\mu_1)]_{k  \geq  0}/\langle \upgamma_{2^k}(\mu_1)^2 \rangle
\]
($\upgamma_{2^k}(\mu_1)$ sitting always in degree $(2^{k+2}, 2^k)$), via the following map: if one writes $l= \sum_{k  \geq  0} \varepsilon_k 2^k$, with $\varepsilon_k \in \{0,1\}$, then $\upgamma_l (\mu_1) \mapsto \prod_k \left ( \upgamma_{2^k} (\mu_1) \right)^{\varepsilon_k}$. In particular, this implies (in $E^4_{*,0,*}$) that $(\upgamma_l (\mu_1))^2=0$ for all $l$ but the largest power of 2 smaller than or equal to $f-1$.  

Moreover we suppose to know that every class $\alpha \in E^4_{k,0,*}$ for $k \leq 4(f-1)$ is a finite sum $\alpha= \sum_{i=0}^{f-1} \upgamma_i (\mu_1)  \beta_i$ with $d^4(\beta_i)=0$. In this sense, the divided powers $\upgamma_l (\mu_1)$ of $\mu_1$ are the sole source of non-trivial $d^4$ differentials. Moreover, any such sum $\sum_{i=0}^{f-1} \upgamma_i (\mu_1)  \beta_i=0$ is null if and only if $\beta_i=0$ for all $i$.

\begin{rmk}\label{rmk:precise.independence.mu1}
	Observe that from this we can deduce inductively the algebraic independence of the various $\upgamma_l (\mu_1)$ from the $d^4$-cycles in $E^4_{*,0,*}$. In fact, we know that almost all $\upgamma_l (\mu_1)$, for $l \leq f-1$, square to zero, 
    and more generally any product of two such $\upgamma_l(\mu_1)$ is either zero or another $\upgamma_{l'} (\mu_1)$. Hence we can deduce that any homogeneous polynomial equation in degree smaller than or equal to $4(f-1)$ can be rewritten as a linear combination of the $\upgamma_l (\mu_1)$:
	\[
		\sum_{l=0}^{f-1} \beta_l \upgamma_l (\mu_1)
	\]
	where $\beta_l \in E^4_{*,0,*}$ such that $d^4(\beta_l)=0$.
	By the last claim, this sum equals zero if and only if all the $\beta_l=0$, in other words, if the relation is trivial. 
	
\end{rmk}

At each step of the induction procedure, we can draw some additional conclusions.

\begin{rmk} \label{rmk:consequence.of.ind.step.mu1}
	Before passing to the proof of the induction argument, observe that the induction hypothesis for $f-1$ implies that the $d^4$ differential is surjective on $E^4_{l,3,*}$ for $l \leq 4(f-1)-4=4(f-2)$: any element there is, by the previous point, a sum of classes of the form $\upgamma_h (\mu_1) \alpha \tau_1$, with $\alpha \in E^4_{*,0,*}$ and $d^4(\alpha)=0$ (and possibly $h=0$); then $d^4( \upgamma_{h+1} (\mu_1) \alpha)= \upgamma_h (\mu_1) \alpha \tau_1$. 
	
	More in general, since we know that the $E^4$ page is isomorphic to the tensor product of the algebras $E^4_{0,*,*}$ and $E^4_{*,0,*}$, we can apply this last argument to all the rows that start in the first column with a multiple of $\tau_1$. Hence, we then deduce that the columns over multiples of $\upgamma_h (\mu_1) \beta$, for some $h$, with the first degree of $\upgamma_h (\mu_1) \beta$ smaller than or equal to $4(f-2)$, disappear from the $E^5$ page. This is proven exactly as for the column $E^4_{4,*,*}$: any element is either the source or the target of a non-trivial $d^4$ differential. This means, symmetrically, that the portion of all rows that originate from a multiple of $\tau_1$, up to the horizontal degree $4(f-2)$, is in the image of the $d^4$ differential.
\end{rmk}

We have already discussed the base steps before, so we want to prove our assertion for the next integer $f$, assuming the thesis for the previous integer $f-1$.

If $f$ is not a power of 2, we immediately get that $\upgamma_f (\mu_1) \neq 0$ and $d^4(\upgamma_f (\mu_1))=\upgamma_{f-1} (\mu_1) \tau_1$: in fact, $\upgamma_f (\mu_1)$ is given by a product of elements we have already introduced and the action of the $d^4$ differential on it is prescribed by the Leibniz rule. 

\begin{lemma}\label{lmm:leibniz.on.div.pow}
	The Leibniz rule implies: $d^4(\upgamma_f (\mu_1))= \upgamma_{f-1} (\mu_1) \tau_1$. 
\end{lemma}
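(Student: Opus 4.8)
\emph{Proof strategy.} The plan is to write $\upgamma_f(\mu_1)$ as a product of two divided powers of strictly smaller index, apply the Leibniz rule, and simplify the two resulting terms using the truncated divided-power relations supplied by the induction hypothesis for $f-1$.

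First I would single out the lowest nonzero binary digit $2^{k_0}$ of $f$ and set $f' := f - 2^{k_0}$; since $f$ is not a power of $2$ we have $f' > 0$, and the binary expansion of $f'$ only involves digits in positions $> k_0$. By definition of the divided powers this gives $\upgamma_f(\mu_1) = \upgamma_{2^{k_0}}(\mu_1)\,\upgamma_{f'}(\mu_1)$, with both $2^{k_0}$ and $f'$ lying in $\{1,\dots,f-1\}$, so the induction hypothesis already records $d^4(\upgamma_{2^{k_0}}(\mu_1)) = \upgamma_{2^{k_0}-1}(\mu_1)\tau_1$ and $d^4(\upgamma_{f'}(\mu_1)) = \upgamma_{f'-1}(\mu_1)\tau_1$. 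As $p = 2$ all signs in the Leibniz rule are trivial and $\tau_1$ is central, so one obtains
\[
    d^4(\upgamma_f(\mu_1)) \;=\; \upgamma_{2^{k_0}-1}(\mu_1)\,\upgamma_{f'}(\mu_1)\,\tau_1 \;+\; \upgamma_{2^{k_0}}(\mu_1)\,\upgamma_{f'-1}(\mu_1)\,\tau_1 .
\]
Every monomial here has first degree at most $4(f-1)$, which is precisely the range in which the subalgebra generated by the $\upgamma_l(\mu_1)$ has been identified with $\mathbb{F}_2[\upgamma_{2^k}(\mu_1)]/\langle\upgamma_{2^k}(\mu_1)^2\rangle$ (see Remark~\ref{rmk:precise.independence.mu1}), so the two products may be evaluated there.

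Then I would observe that in the first term the binary supports of $2^{k_0}-1$ (the digits $0,\dots,k_0-1$) and of $f'$ (digits $>k_0$) are disjoint, hence $\upgamma_{2^{k_0}-1}(\mu_1)\,\upgamma_{f'}(\mu_1) = \upgamma_{f-1}(\mu_1)$; while in the second term the lowest digit of $f'$ is strictly above $k_0$, so digit $k_0$ is set in $f'-1$, whence the generator $\upgamma_{2^{k_0}}(\mu_1)$ appears squared in $\upgamma_{2^{k_0}}(\mu_1)\,\upgamma_{f'-1}(\mu_1)$ and this term vanishes. This yields $d^4(\upgamma_f(\mu_1)) = \upgamma_{f-1}(\mu_1)\tau_1$, as claimed.

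The only delicate point — the main, if modest, obstacle — is the binary-digit bookkeeping: one must be sure that the lowest digit of $f'$ genuinely exceeds $k_0$ (this is what forces the cancellation of the second term) and that no monomial produced by Leibniz escapes the degree range $\le 4(f-1)$ in which the induction hypothesis gives the truncated divided-power multiplication. Equivalently one can run the cancellation through Kummer's theorem — the carry at position $k_0$ makes $\binom{f-1}{2^{k_0}}$ even and $\binom{f-1}{2^{k_0}-1}$ odd — but the direct support computation is shorter and avoids invoking the abstract identity $\upgamma_i\upgamma_j = \binom{i+j}{i}\upgamma_{i+j}$ outside the truncated setting.
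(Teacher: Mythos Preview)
Your proof is correct and rests on the same binary-digit combinatorics as the paper's, but the organization differs slightly. The paper applies the Leibniz rule to the full product $\upgamma_f(\mu_1)=\prod_{k\in K}\upgamma_{2^k}(\mu_1)$, obtaining $|K|$ summands $\upgamma_{2^k-1}(\mu_1)\tau_1\prod_{u\neq k}\upgamma_{2^u}(\mu_1)$, and then observes that for $k\neq\bar k:=\min K$ the factor $\upgamma_{2^{\bar k}}(\mu_1)$ occurs both in $\upgamma_{2^k-1}(\mu_1)$ and in the product over $u\neq k$, hence squares to zero. You instead split off only the lowest factor, write $\upgamma_f(\mu_1)=\upgamma_{2^{k_0}}(\mu_1)\,\upgamma_{f'}(\mu_1)$, and invoke the induction hypothesis on the \emph{composite} index $f'$; this leaves just two Leibniz terms to examine. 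Your route is a touch shorter and leans more heavily on the inductive statement, while the paper's version uses the differential formula only on the power-of-two generators and does the cancellation by hand. The key observation---that the lowest binary digit governs which term survives via the squaring relation---is identical in both.
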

\begin{proof}
	Write the binary expansion of $f$: $f=\sum_{k=1}^{\lfloor log_2 f \rfloor} e_k 2^{k}$, where the $e_k \in \{0,1\}$.
	Recall that $\upgamma_f (\mu_1)$ is given by:
	\[
		\upgamma_f (\mu_1)= \prod_{k=1}^{\lfloor log_2 f \rfloor} \upgamma_{e_k2^{k}}( \mu_1).
	\]
Let $K:=\{ k \in \mathbb{N}: e_k \neq 0\}$.
	
	Apply the Leibniz rule to such a product:
	\begin{align*}
		d^4(\upgamma_f (\mu_1))&=d^4\left( \prod_{k\in K} \upgamma_{2^{k}}(\mu_1) \right)=
		\sum_{k\in K}  d^4 \left(\upgamma_{2^{k}} (\mu_1) \right) \prod_{\substack{u \in K\\u\neq k}} \upgamma_{2^{u}} (\mu_1)\\
		&=\sum_{k\in K}  \upgamma_{2^{k}-1} (\mu_1) \tau_1 \prod_{\substack{u \in K\\u\neq k}} \upgamma_{2^{u}} (\mu_1)
	\end{align*}
	Now $2^h-1= \sum_{e=0}^{h-1} 2^e$, so:
	\[
		d^4(\upgamma_f (\mu_1))= \sum_{k \in K} \left ( \prod_{e=0}^{k-1}\upgamma_{2^e} (\mu_1)  \prod_{\substack{u\in K\\u\neq k}} \upgamma_{2^{u}} (\mu_1) \right ) \tau_1 
	\]
	Let $\bar{k}= min (K)$. Notice now that all the summands with $k \neq \bar{k}$ are null: there the factor $\upgamma_{2^{\bar{k}}}(\mu_1)$ appears in both of the products; but $\upgamma_{2^{\bar{k}}}(\mu_1)^2=0$ by hypothesis. So:
	\[
		d^4(\upgamma_f (\mu_1))= \left (\prod_{e=0}^{\bar{k}-1}\upgamma_{2^e} (\mu_1) \prod_{\substack{u\in K\\u\neq \bar{k}}}\upgamma_{2^{u}} (\mu_1)\right ) \tau_1  = \upgamma_{f-1} (\mu_1) \tau_1.
	\]
\end{proof}

By the induction hypothesis, the element $\upgamma_{f-1} (\mu_1) \tau_1 \in E^4_{4(f-1),3,f}$ is nontrivial; given the above differential, $\upgamma_f (\mu_1) \neq 0$ as well.

Also, we get immediately that $(\upgamma_f (\mu_1))^2=0$, as we know that at least one of the divisors of $\upgamma_f (\mu_1)$ squares to zero.

Then, we must show that any class $\alpha \in E^4_{k,0,*}$ with $4f-3 \leq k \leq 4f$ can be written as $\alpha=\sum_{l=0}^{f} \beta_l \upgamma_l (\mu_1)$ with $d^4(\beta_l)=0$. If $d^4(\alpha)=0$ there is nothing to show. Now, suppose $\alpha$ supports a non-trivial $d^4$ differential; it has to be of the form 
\[
	d^4(\alpha)=\left(\sum_{l=0}^{f-1} \upgamma_l (\mu_1) \beta_l\right ) \tau_1
\]
with $d^4(\beta_l)=0$ for all $l$, as by induction hypothesis all elements in $E^4_{*,0,*}$ with degree smaller than or equal to $4(f-1)$ admit a similar expression, and $E^4_{*,3,*}\cong E^4_{*,0,*}\tau_1$. Consider now the element:
\[
	\tilde{\alpha}=\sum_{l=0}^{f-1} \upgamma_{l+1} (\mu_1) \beta_l
\]
Then $d^4(\tilde{\alpha}) = d^4(\alpha)$, so $\alpha- \tilde{\alpha}$ is a $d^4$ cycle, as required.

To conclude this step of the induction argument, we have to show the algebraic independence in degrees up to $4f$, see remark \ref{rmk:precise.independence.mu1}. Suppose we had a homogeneous equation:
\[
	\sum_{l=0}^f \beta_l \upgamma_l (\mu_1) =0 
\]
with $d^4(\beta_l)=0$; apply the $d^4$-differential:
\[
	0=d^4 \left(\sum_{l=0}^f  \upgamma_l (\mu_1) \beta_l \right) =\sum_{l=1}^f \beta_l \upgamma_{l-1} (\mu_1) \tau_1
\]
As the $E^4$ page decomposes as a tensor product over a field, we must have $\sum_{l=1}^f \beta_l \upgamma_{l-1} (\mu_1)=0$; given that the degree of this homogeneous sum is at most $4f-4$, by induction hypothesis $\beta_l=0$ for all $1\leq l \leq f$. But then $\beta_0=0$ as well, as we wanted to show.

Finally observe that applying remark \ref{rmk:consequence.of.ind.step.mu1} to this step of the induction procedure proves that the $d^4$ differential surjects onto $E^4_{l,3,*}$ for $l \leq 4 f - 4$ (and similarly for the other rows and columns).

The situation gets more complicated whenever $f$ is a power of 2, say $f=2^i$. In this case, the element we will introduce, $\upgamma_{2^i} (\mu_1)$, is algebraically independent of the previous divided powers, hence we actually have to prove its existence, rather than verifying that some differential is well-behaved. 

We begin by noticing that no element which is algebraically dependent of the $\upgamma_j (\mu_1)$, $1 \leq j \leq 2^i-1$ differentiates to $\upgamma_{2^i-1} (\mu_1) \tau_1$. According to our hypothesis, proceeding as in remark \ref{rmk:precise.independence.mu1}, we can express any such element in degree $4(2^i)$ as a sum:
\[
	x=\sum_{j=0}^{2^i-1} \beta_j \upgamma_j (\mu_1)
\]
with $d^4(\beta_j)=0$; observe incidentally that $(\upgamma_{2^{i-1}} (\mu_1))^2$ can appear as a summand of $\beta_0$, as, by the Leibniz rule, $d^4((\upgamma_{2^{i-1}} (\mu_1))^2)=0$. Suppose then by contradiction that $d^4(x)= \upgamma_{2^i-1} (\mu_1) \tau_1$.

We can then determine the $d^4$ image of $x$:
\[
	\upgamma_{2^i-1} (\mu_1) \tau_1=d^4(x)=\sum_{j=1}^{2^i-1} \beta_j \upgamma_{j-1} (\mu_1) \tau_1 
\]
Because of the decomposition as a tensor product, we conclude the identity:
\[
	\sum_{j=1}^{2^i-1} \beta_j \upgamma_{j-1} (\mu_1) = \upgamma_{2^i-1} (\mu_1)
\]
By the induction hypothesis, such an equation is impossible ($\upgamma_{2^i-1} (\mu_1)$ has coefficient 1).

To justify the presence of $\upgamma_{2^i} (\mu_1)$, we must then show that all differentials ($d^4$ and higher) out of $\upgamma_{2^i-1} (\mu_1) \tau_1$ have to be trivial. We will prove this again by contradiction, by carefully studying the weights of possible elements involved in such differentials. We make use of the following:
\begin{lemma} \label{lmm:d/w.for.E4}
	For any homogeneous $x \in E^4_{0,*,*}\cong \pi_{*,*}(\mathcal{A}(2)/\tau)/\langle \tau_0, \xi_1\rangle$, if one writes $|x|=(d,w)$ then $2 \leq \frac{d}{w} \leq 3$.
\end{lemma}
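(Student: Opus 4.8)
The plan is to reduce the statement to a finite check on the polynomial generators of $E^4_{0,*,*}$. A homogeneous element is by definition a sum of monomials all having the same bidegree, so it suffices to prove $2w \le d \le 3w$ for every monomial, and since the bidegree of a product is the sum of the bidegrees of its factors, it suffices in turn to prove that each polynomial generator $g$ with $|g| = (d_g, w_g)$ satisfies $2 w_g \le d_g \le 3 w_g$: summing such inequalities over the factors of any monomial yields the bound for the monomial, hence for every homogeneous element. The unit $1$ is the only class of weight $0$, and there the inequalities read $0 \le 0 \le 0$, so it causes no trouble; every other monomial has $w \ge 1$ because each surviving generator has positive weight.

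Next I would unwind the presentation. Using $E^4_{0,*,*} \cong \pi_{*,*}(\mathcal{A}(2)/\tau)/\langle \tau_0, \xi_1\rangle$ together with $\pi_{*,*}(\mathcal{A}(2)/\tau) \cong \mathbb{F}_2[\tau_i, \xi_{i+1}]_{i \ge 0}/\langle \tau_i^2\rangle$, the generators surviving the quotient are $\tau_i$ for $i \ge 1$ and $\xi_j$ for $j \ge 2$, with $|\tau_i| = (2^{i+1}-1,\, 2^i-1)$ and $|\xi_j| = (2^{j+1}-2,\, 2^j-1)$. (The relations $\tau_i^2 = 0$ only restrict which monomials occur and do not affect the set of realized bidegrees.)

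Then I would verify the two inequalities on these generators directly. For $\xi_j$ with $j \ge 2$ one has $2^{j+1} - 2 = 2(2^j - 1)$, so $d_g/w_g = 2$ exactly and both bounds hold, the lower one with equality. For $\tau_i$ with $i \ge 1$, the lower bound $2^{i+1} - 1 \ge 2(2^i - 1) = 2^{i+1} - 2$ is immediate, and the upper bound $2^{i+1} - 1 \le 3(2^i - 1)$ rearranges to $2 \le 2^i$, which holds exactly because $i \ge 1$. Combining with the reduction in the first paragraph finishes the proof.

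There is no genuine obstacle here; the only point requiring care is the bookkeeping of which generators survive the quotient. It is worth recording \emph{why} the quotient is essential: the analogous bound fails in $\pi_{*,*}(\mathcal{A}(2)/\tau)$ itself, since $|\tau_0| = (1,0)$ has positive degree and zero weight, so $d/w$ is unbounded; killing the ideal $\langle \tau_0, \xi_1\rangle$ removes precisely this obstruction, after which every remaining generator is "heavy enough" in weight. The additivity of the bound $\alpha w \le d \le \beta w$ under sums of bidegrees is what lets the generator-level check propagate to all of $E^4_{0,*,*}$.
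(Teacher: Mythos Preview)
Your proof is correct and follows essentially the same approach as the paper: both arguments reduce to checking the inequality $2w \le d \le 3w$ on the polynomial generators $\tau_i$ ($i\ge 1$) and $\xi_j$ ($j\ge 2$) and then use additivity of bidegrees to propagate the bound to arbitrary monomials. Your presentation is slightly more streamlined in that you check each generator separately rather than grouping the $\tau_i$ and $\xi_i$ contributions by index as the paper does, but the underlying idea is identical.
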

We postpone the proof of this at the end of the induction argument. We first calculate: 
\[
	|\upgamma_{2^i-1} (\mu_1) \tau_1|=(2^{i+2}-4,3,2^i)
\]
One has $d^4(\upgamma_{2^i-1} (\mu_1) \tau_1)= \upgamma_{2^i -2} (\mu_1) \tau_1^2=0$; by contradiction, we suppose that our element has a non-trivial longer differential:
\[
	\upgamma_{2^i-1} (\mu_1) \tau_1 \xrightarrow{d^{k_1}} \alpha_1 \omega_1
\]
with $k_1  \geq  5$. The degrees of the image are:
\begin{gather*}
	|\alpha_1 \omega_1|=(2^{i+2}-4-k_1,2+k_1,2^i) \\
	|\alpha_1|=(2^{i+2}-4-k_1,0,2^i-w_1) \\
	|\omega_1|=(0,2+k_1,w_1).
\end{gather*}

\begin{rmk}\label{rmk:representatives.in.E4}
	Here (and in later passages as well) we improperly represent the image of a (long) $d^{k_1}$ differential as the product of two elements, with $\alpha_1$ on the horizontal axis and $\omega_1$ on the vertical axis; this is wrong in general: first of all, we do not have an a-priori tensor decomposition for pages higher than the $E^4$ (even though we will prove that all pages are equipped with such a structure), and, even under that assumption, the image of a differential might be a sum of elementary tensors. However, any element in the $E^k$ page is given by a subquotient of a homogeneous module living in the $E^4$ page, where we have a decomposition as tensor product, so for any non-zero homogeneous element of the $E^{k_1}$ page it is possible to find an elementary tensor $\alpha_1\omega_1$ in the $E^4$ page with the same degrees, such that $0 \neq \alpha_1\omega_1 \in E^{k_1}$. The reader should then interpret this writing as: the element $\alpha_1\omega_1 \in E^4_{*,*,*}$ has the same degrees as $d^{k_1}(\upgamma_{2^i-1} (\mu_1) \tau_1)$ and the product $\alpha_1\omega_1$ is nontrivial in the $E^{k_1}$ page.
\end{rmk}

One has some constraints on these parameters:
\begin{gather*}
	k_1  \geq  5\\
	w_1  \geq  3 
\end{gather*}
as all elements in $E^4_{0,*,*}$ but $\tau_1$ have weight at least 3. Observe also that, as $\omega_1$ survives at least to the $E^5$ page, we can apply the induction hypothesis to each of its summands, and deduce in particular that they are not divisible by $\tau_0$, $\xi_1$ and $\tau_1$. Hence lemma \ref{lmm:d/w.for.E4} imposes:
\begin{equation}\label{eq:inequalities.omega1.mu1tau1}
    2 w_1 \leq k_1+2 \leq 3 w_1.
\end{equation}

Suppose first that $k_1=2^{i+2}-4$, that is, the differential hits the zeroth column. In this case, we must have $\alpha_1=1$, as no other nonzero element sits in $E^4_{0,0,*}$. Then $|\omega_1|=(0,2^{i+2}-2,2^i)$, so, seen as an element of the motivic dual Steenrod algebra:
\[
	\frac{deg(\omega_1)}{w(\omega_1)}=\frac{0+2^{i+2}-2}{2^i}= 4- \frac{2}{2^i}
\]
We performed a direct computation of the base step of the induction (that is, $i=1$), so we can suppose $i>1$, in which case $4- \frac{2}{2^i}>3$, which leads to a contradiction with lemma \ref{lmm:d/w.for.E4}. 

Suppose now $\alpha_1$ has a positive degree. We observe that $\alpha_1$ must support a non-trivial differential because the convergence term of the spectral sequence is concentrated in degree $(0,0,*)$; also, this differential must be longer than a $d^4$, as
\[
	deg(\alpha_1)= 2^{i+2}-4-k_1 \leq 2^{i+2}-4-5 <  4(2^i-2)
\]
so $\alpha_1$ belongs to the region controlled by remark \ref{rmk:consequence.of.ind.step.mu1}: if $\alpha_1$ supports a non-trivial $d^4$ differential, then the whole column $\alpha_1 E^4_{0,*,*}$ disappears after the $E^4$ page, making in particular the element $\alpha_1 \omega_1$ trivial in the $E^{k_1}$ page, contradicting our assumptions.

So $\alpha_1$ survives after the $E^4$ page, hence we must have:
\[
	\alpha_1 \xrightarrow{d^{k_2}} \alpha_2\omega_2
\]
with $k_2 \geq  7$, as $7=6+1$ and six is the smallest positive degree in $E^4_{0,*,*}$ presenting a non-zero module after 3 (where $\tau_1$ is) in the $E^4$ page. Again, these $\alpha_2$ and $\omega_2$ are just representatives in $E^4$ for the image of such a differential,  see remark \ref{rmk:representatives.in.E4}. If we write:
\begin{gather*}
	|\alpha_2 \omega_2|=(2^{i+2}-4-k_1-k_2,k_2-1,2^i-w_1) \\
	|\alpha_2|=(2^{i+2}-4-k_1-k_2,0,2^i-w_1-w_2) \\
	|\omega_2|=(0,k_2-1,w_2).
\end{gather*}
	we have constraints:
\begin{gather*}
	k_2  \geq  7\\
	w_2  \geq  3
\end{gather*}
as all elements in $E^4_{0,*,*}$ but $\tau_1$ have weight at least 3. Again, as $\omega_2$ survives at least to the $E^5$ page, given its degree, we can apply the induction hypothesis to each of its summands and deduce that they are not divisible by $\tau_0$, $\xi_1$ and $\tau_1$; by lemma \ref{lmm:d/w.for.E4}:
\[
2 w_2 \leq k_2-1 \leq 3 w_2
\]

We repeat this process until we reach a differential $d^{k_n}$ with $d^{k_n}(\alpha_{n-1})=\omega_n \in E^{k_n}_{0,*,*}$. This can be seen in figure \ref{fig:induction.E4}.

\begin{figure}
    \centering
    \adjustbox{max width=\textwidth, center}{\begin{tikzpicture}
        \filldraw[almostwhite] (10.5,0) rectangle (12.5,9.3);

        \draw[red!30] (0,3) -- (27.7,3);
        \draw[red!30] (27.5,0) -- (27.5,3.2);
        
        \draw[red!30] (0,7) -- (27.7,7); 

        \draw[gray!60] (0,8) -- (21.7,8);
        \draw[gray!60] (21.5,0) -- (21.5,8.2);

        \draw[gray!60] (0,7.5) -- (13.2,7.5);
        \draw[gray!60] (13,0) -- (13,7.7);

        \draw[gray!60] (0,8.5) -- (10.2,8.5);
        \draw[gray!60] (10,0) -- (10,8.7);

        \draw[black, ->] (0,0) -- (0,9.5);
        \draw[black] (0,0) -- (10.5,0);
        \draw[black, dashed] (10.5,0) -- (12.5,0);
        \draw[black, ->] (12.5,0) -- (28,0);

        {\filldraw[red]
        (27.5,3) circle (1pt) node[right]{\normalsize $\upgamma_{2^i-1}(\mu_1)\tau_1$}
        (27.5,0) circle (1pt) node[below]{\normalsize $\upgamma_{2^i-1}(\mu_1)$}
        (0,3) circle (1pt) node[left]{\normalsize $\tau_1$}
        (0,7) circle (1pt) node[left]{\normalsize $\xi_2$}
        ;}
        
        {\filldraw[black]
        (21.5,8) circle (1pt) 
        (21.5,0) circle (1pt) node[below]{\normalsize $\alpha_1$}
        (0,8) circle (1pt) node[left]{\normalsize $\omega_1$}
        
        (13,7.5) circle (1pt) 
        (13,0) circle (1pt) node[below]{\normalsize $\alpha_2$}
        (0,7.5) circle (1pt) node[left]{\normalsize $\omega_2$}
        
        (10,8.5) circle (1pt) 
        (10,0) circle (1pt) node[below]{\normalsize $\alpha_{n-1}$}
        (0,8.5) circle (1pt) node[left]{\normalsize $\omega_{n-1}$}
        
        (0,9) circle (1pt) node[left]{\normalsize $\omega_n$}
        ;}

        \draw[black, ->] (27.5,3) -- (21.5,8) node[right=6pt,  midway] {\normalsize  $d^{k_1}$};
        \draw[black, ->] (21.5,0) -- (13,7.5)  node[right=6pt,  midway] {\normalsize  $d^{k_2}$};
        \draw[black] (13,0) -- (12.5,0.4375);
        \draw[black, ->] (10.5,8.1) -- (10,8.5);
        \draw[black, ->] (10,0) -- (0,9)  node[right=6pt,  midway] {\normalsize  $d^{k_n}$};
        
    \end{tikzpicture}}
    \caption{A visualisation of what should happen if $\upgamma_{2^i-1}(\mu_1)\tau_1$ supported a non trivial differential.}
    \label{fig:induction.E4}
\end{figure}
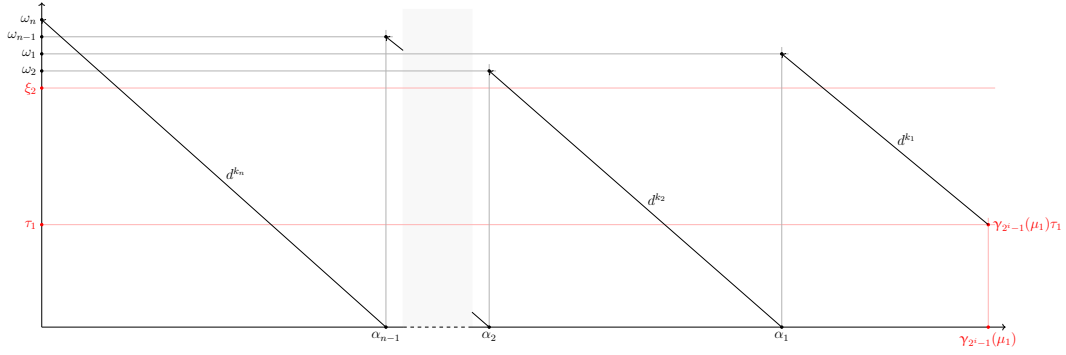

For the differentials $k_3,\ldots,k_n$ we can draw similar conclusions as for the differential $k_2$. In particular, as we have:
\[
	|\alpha_{n-1}|=(2^{i+2}-4-k_1-k_2-\ldots-k_{n-1},0,2^i-w_1-w_2-\ldots-w_{n-1})
\]
if we put:
\[
	|\omega_n|=(0,k_n-1,w_n).
\]
we get:
\begin{equation} \label{eqn:sum.kj.wj}
	\begin{cases}
		2^{i+2}-4-k_1-k_2-\ldots-k_{n-1}=k_n\\
		2^i-w_1-w_2-\ldots-w_{n-1}=w_n.
	\end{cases}
\end{equation}

We analogously get the constraints:
\begin{gather*}
	k_j  \geq  7\\
	w_j  \geq  3\\
	2 w_j \leq k_j-1 \leq 3 w_j
\end{gather*}
for all $2 \leq j \leq n$. Summing on all indices the last row of inequalities (recall that for $j=1$ we must use \ref{eq:inequalities.omega1.mu1tau1}):
\begin{gather*}
	2w_1+2\sum_{j=2}^n w_j  \leq k_1+2 +\sum_{j=2}^n (k_j-1) \leq 3w_1+3\sum_{j=2}^n w_j\\
	2\sum_{j=1}^n w_j -2+n-1\leq \sum_{j=1}^n k_j \leq 3\sum_{j=1}^n w_j -2+n-1
\end{gather*}
Plugging in equation \ref{eqn:sum.kj.wj} provides:
\begin{gather*}
	2\cdot2^i +n-3\leq 2^{i+2}-4 \leq 3\cdot2^i +n-3\\
	2\cdot2^i \leq 2^{i+2}-n-1 \leq 3\cdot2^i
\end{gather*}
So:
\[
\begin{cases}
	2 \cdot 2^i \leq 4 \cdot 2^i -n -1 \\
	4 \cdot 2^i -n -1 \leq 3 \cdot 2^i
\end{cases}
\]
From the second equation, we get $n+1  \geq  2^i$. Now, go back to equation \ref{eqn:sum.kj.wj}; by recalling that $k_1  \geq  5$ and, for $j  \geq  2$, $k_j  \geq  7$, we have:
\[
2^{i+2}=4+k_1+k_2+\ldots+k_n  \geq  4+ 5+ 7(n-1)  \geq  9+7(2^i-2)
\]
which gives:
\[
7\cdot 2^i - 4 \cdot 2^i = 3 \cdot 2^i \leq 5
\]
which is impossible for $i  \geq  1$. Observe that the same estimate can be obtained with the weights (recall that they all value at least 3):
\[
2^i= w_1+ \ldots + w_n  \geq  3 n  \geq  3(2^i -1)
\]
in other words, $2 \cdot 2^i \leq 3$, which again does not hold for all $i  \geq  1$. As we reached a contradiction, we cannot have a non-trivial differential out of $\upgamma_{2^{i}-1}(\mu_1)\tau_1$; but then, by the convergence result for the spectral sequence, we must have a class $\upgamma_{2^i} (\mu_1) \in E^4_{4 \cdot 2^i, 0, 2^i}$ with $d^4(\upgamma_{2^i} (\mu_1)) = \upgamma^{2^i -1} (\mu_1) \tau_1$. Given the description of the previous pages, this must correspond to a new class $\upgamma_{2^i} (\mu_1) \in \pi_{4 \cdot 2^i, 2^i} (MHH(M\mathbb{Z}/2)/\tau)$.

We can now show show that any homogeneous element $\alpha$ in $E^4_{k,0,*}$, with $k \leq 2^{i+2}$, is a finite sum $\alpha= \sum_{i=0}^{f} \upgamma_i (\mu_1)  \beta_i$ with $d^4(\beta_i)=0$. Observe incidentally that we have not yet proven that $(\upgamma_{2^{i-1}} (\mu_1))^2=0$; however, $d^4((\upgamma_{2^{i-1}} (\mu_1))^2)=0$ by characteristic, hence $(\upgamma_{2^{i-1}} (\mu_1))^2$ would in any case end up as a summand of $\beta_0$. If $d^4(\alpha)=0$ we have nothing to prove. In any case, by the previous step of the induction hypothesis and the tensor decomposition of the $E^4$ page, we can express:
\[
    d^4(\alpha)= \left( \sum_{i=0}^{f-1} \upgamma_i (\mu_1)  \beta_i \right ) \tau_1
\]
with $d^4(\beta_i)=0$; consider then the element:
\[
    \tilde{\alpha}=\sum_{i=0}^{f-1} \upgamma_{i+1} (\mu_1)  \beta_i.
\]
By what we have just determined: $d^4(\tilde{\alpha})=d^4(\alpha)$, hence $\tilde{\alpha}-\alpha$ is a $d^4$ cycle, as we wanted to show. Now, if:
\[
    \sum_{i=0}^{f} \upgamma_i (\mu_1)  \beta_i=0
\]
Then:
\[
    0=d^4(\sum_{i=0}^{f} \upgamma_i (\mu_1)  \beta_i)= \left( \sum_{i=1}^{f} \upgamma_{i-1} (\mu_1)  \beta_i \right ) \tau_1
\]
and by the decomposition as a tensor product  of the $E^4$ page:
\[
    \sum_{i=1}^{f} \upgamma_{i-1} (\mu_1)  \beta_i.
\]
By induction hypothesis, $\beta_i=0$ for all $1 \leq i \leq f$. But then also $\beta_0=0$ as we wished.

To conclude this step of the induction argument, we are left to prove that $(\upgamma_{2^{i-1}} (\mu_1))^2=0$.
We already noticed that $d^4((\upgamma_{2^{i-1}} (\mu_1))^2)=0$ because of the characteristic of the base ring. Suppose by contradiction that $(\upgamma_{2^{i-1}} (\mu_1))^2$ were non-zero; then, by the convergence term of the spectral sequence, it had to support some longer differential. We prove that this is impossible by mimicking the argument that excluded any non-trivial differential for $\upgamma^{2^i -1} (\mu_1) \tau_1$. 

Observe that, because of the structure of the $E^4$ page, the non-trivial differential out of $(\upgamma_{2^{i-1}} (\mu_1))^2$ has length at least 7. We adopt similar conventions as in remark \ref{rmk:representatives.in.E4} to deal with the image of such differential. As $(\upgamma_{2^{i-1}} (\mu_1))^2$ has double degree $(2^{i+2},2^i)$, if one supposes:
\[
	(\upgamma_{2^{i-1}} (\mu_1))^2 \xrightarrow{d^{k_1}} \alpha_1 \omega_1
\]
one finds:
\begin{gather*}
	|\alpha_1 \omega_1|=(2^{i+2}-k_1,k_1-1,2^i) \\
	|\alpha_1|=(2^{i+2}-k_1,0,2^i-w_1) \\
	|\omega_1|=(0,k_1-1,w_1)
\end{gather*}
with constraints:
\begin{gather*}
	k_1  \geq  7\\
	w_1  \geq  3 
\end{gather*}
as all elements in $E^4_{0,*,*}$ but $\tau_1$ have weight at least 3; as before, since $\omega_1$ survives at least to the $E^5$-page, given its degree, by the induction hypothesis its summands are not divisible by $\tau_0$, $\xi_1$ and $\tau_1$; then, by lemma \ref{lmm:d/w.for.E4} we also have:
\[
    2 w_1 \leq k_1-1 \leq 3 w_1 
\]

One then studies the differential acting non-trivially on $\alpha_1$ (if $k_1  <  2^{i+2}$: the case $k_1 = 2^{i+2}$ does not require a distinct treatment here), and one finds the same constraints on the degree and weight of the factors in its image.
By remark \ref{rmk:consequence.of.ind.step.mu1} applied to this induction step (we have already proven what is necessary for that to work), $\alpha_1$ must support a differential longer than a $d^4$, otherwise the whole column $\alpha_1 E^4_{0,*,*}$ would vanish from the $E^5$ page onwards. Hence $\alpha_1$ supports at least a $d^7$, producing constraints on a possible image $\alpha_2\omega_2$ analogous to those we have on $\alpha_1 \omega_1$.

We repeat this process $n$ times until we finally reach a differential hitting the vertical line in $E^{k_n}_{0,k_n-1, w_n}$. We have then:
\begin{equation}\label{eq:sum.ki.wi.mu1^2} 
	\begin{cases}
		2^{i+2}=k_1+k_2+\ldots+k_n\\
		2^i=w_1+w_2+\ldots+w_n.
	\end{cases}
\end{equation}
with the usual constraints:
\begin{gather*}
	2 w_j \leq k_j-1 \leq 3 w_j\\
	k_j  \geq  7\\
	w_j  \geq  3 
\end{gather*} 
for all $1 \leq j \leq n$.

Summing on all indices:
\begin{gather*}
	2 \sum_{j=1}^n w_j \leq \sum_{j=1}^n k_j -n \leq 3 \sum_{j=1}^n w_j\\
	2 \cdot 2^i \leq 2^{i+2} -n \leq 3 \cdot 2^i
\end{gather*}
So $n  \geq  2^i$ in this case. But then, using the constraints in \ref{eq:sum.ki.wi.mu1^2}:
\begin{equation*} 
	\begin{cases}
		4 \cdot 2^i= 2^{i+2} =k_1+k_2+\ldots+k_n \geq  7n  \geq  7 \cdot 2^i\\
		2^i=w_1+w_2+\ldots+w_n  \geq  3n  \geq  3 \cdot 2^i
	\end{cases}
\end{equation*}
which both provide the desired contradiction, proving: $(\upgamma_{2^{i-1}} (\mu_1))^2 =0$. This concludes the induction procedure for the $E^4$ page.

\begin{proof}[Proof of Lemma \ref{lmm:d/w.for.E4}]
We recall the standard presentation and the degrees and weights of the algebra generators in the mod-$\tau$ dual motivic Steenrod algebra $\pi_{*,*}(\mathcal{A}(2)/\tau)$:
\begin{gather*}
	\pi_{*,*}(\mathcal{A}(2)/\tau) \cong \mathbb{F}_2[\tau_i,\xi_{i+1}]_{i  \geq  0}/\langle \tau_i^2 \rangle \\
	|\tau_i|=(2^{i+1}-1,2^i-1)\\
	|\xi_i|=(2^{i+1}-2, 2^i-2).
\end{gather*}
Any homogeneous element in $E^4_{0,*,*} \cong \pi_{*,*}(\mathcal{A}(2)/\tau)/\langle \tau_0, \xi_1 \rangle$ is a homogeneous sum of products of the form:
\[
	x=\tau_1^{\varepsilon_1} \tau_2^{\varepsilon_2} \cdots \tau_k^{\varepsilon_k}\xi_2^{l_2} \cdots \xi_k^{l_k}
\]
with $\varepsilon_i \in \{0,1\}$ and $l_i \in \mathbb{N}$. Its bi-degree is:
\begin{align*}
	|x|&=\left (\sum_{i=1}^k \varepsilon_i(2^{i+1}-1)+\sum_{i=2}^k l_j(2^{i+1}-2), \sum_{i=1}^k \varepsilon_i(2^{i}-1)+\sum_{i=2}^k l_j(2^{i}-2) \right )\\
	&=\left (\sum_{i=1}^k \left[(\varepsilon_i+l_i)(2^{i+1}-2)+ \varepsilon_i\right], \sum_{i=1}^k (\varepsilon_i+l_i)(2^{i}-1) \right )
\end{align*}
where we set $l_1=0$. We study the degree/weight ratio (observe that no non-unit element has weight zero at this stage):
\[
	\frac{\sum_{i=1}^k \left[(\varepsilon_i+l_i)(2^{i+1}-2)+ \varepsilon_i\right]}{\sum_{i=1}^k (\varepsilon_i+l_i)(2^{i}-1)}
\]
We want to compare summand by summand; observe that:
\[
	\left [(\varepsilon_i+l_i)(2^{i+1}-2)+ \varepsilon_i\right]=0 \iff \varepsilon_i=l_i=0 \iff (\varepsilon_i+l_i)(2^{i}-1)=0
\]
as $\varepsilon_i$ and $l_i$ are non negative and $2^i-1$ and $2^{i+1}-2$ are strictly positive for all $i  \geq  1$. So we can safely study the ratio for any $i$ such that $\varepsilon_i\neq 0$ or  $l_i \neq 0$. One has:
\[
	2 =
	\frac{(\varepsilon_i+l_i)(2^{i+1}-2)}{ (\varepsilon_i+l_i)(2^{i}-1)} \leq
	\frac{(\varepsilon_i+l_i)(2^{i+1}-2)+ \varepsilon_i}{(\varepsilon_i+l_i)(2^{i}-1)}=
	2+ \frac{\varepsilon_i}{(\varepsilon_i+l_i)(2^{i}-1)}
\]

Now, if $\varepsilon_i=0$:
\[
	\frac{\varepsilon_i}{(\varepsilon_i+l_i)(2^{i}-1)}=0 \leq 1
	\]
If $\varepsilon_i=1$ instead:
\[
\frac{\varepsilon_i}{(\varepsilon_i+l_i)(2^{i}-1)}=\frac{1}{(1+l_i)(2^{i}-1)} \leq 1
\]
as $2^i-1  \geq  1$ for $i  \geq  1$. So we get the general estimate
\[
	2 \leq
	\frac{(\varepsilon_i+l_i)(2^{i+1}-2)+ \varepsilon_i}{(\varepsilon_i+l_i)(2^{i}-1)}
	\leq 2+1=3.
\]
This estimate, being independent of $i$, applies to the ratio of the sums as well, proving our thesis.
\end{proof}

\begin{rmk}
	This estimate cannot be perfected for the $E^4$ page, as there are classes (for instance, $\xi_2$ and $\tau_1$) that have the degree/weight ratio equal to either $2$ or $3$. We will see in lemma \ref{lmm:d/w.for.E.2.j+1} that, while in general the bound $2$ is fixed (each $\xi_i$- and hence their powers and products- has a ratio equal to 2), the upper bound will decrease if we remove from the dual Steenrod algebra classes with low degree (or equivalently, low weight).
\end{rmk}

Knowing all the non-trivial $d^4$ differentials, we can conclude, as in the previous passages, that we have a commutative triangle:
\[
    \begin{tikzcd}
		E^5_{*,0,*} \arrow[r, hook] \arrow[rdd, "\cong"] & 
		E^4_{*,0,*} \arrow[dd, two heads] \\&\\
		& E^4_{*,0,*}/\langle \upgamma_i (\mu_1) \rangle_{i  \geq  1} 
    \end{tikzcd}
\]
\[
    E^5_{*,0,*} \cong  \pi_{*,*}(MHH(M\mathbb{Z}/2)/\tau)/\langle \upgamma_i (\tau_0),\, \lambda_1,\, \upgamma_i (\mu_1) \rangle_{i  \geq  1}
\]
and an isomorphism:
\[
	E^5_{0,*,*} \cong E^4_{0,*,*}/\langle \tau_1 \rangle \cong \pi_{*,*}(\mathcal{A}(2)/\tau)/\langle \tau_0, \xi_1, \tau_1 \rangle
\]
They allow us to identify:
\[
	E^5_{\star,\bullet,*} \cong E^5_{\star,0,*} \otimes_{\mathbb{F}_2} E^5_{0,\bullet,*}
\]
Finally, since the spectral sequence is first-quadrant, we can conclude that the injection:
\[
	\Gamma_{\mathbb{F}_2}(\mu_0) \otimes_{\mathbb{F}_2} \Lambda_{\mathbb{F}_2}(\lambda_1) \otimes_{\mathbb{F}_2} \Gamma_{\mathbb{F}_2}(\mu_1) \hookrightarrow  \pi_{*,*}(MHH(M\mathbb{Z}/2)/\tau)
\]
is an isomorphism in degrees smaller or equal to 4. Observe that in the $E^5$ page all the columns $E^5_{s,*,*}$ for $1 \leq s \leq 4$ and all the rows $E^5_{*,t,*}$ for $1 \leq t \leq 3$ are empty by the argument we carried out. In fact, since the non-trivial generator of $E^5_{0,*,*} \cong \pi_{*,*}(\mathcal{A}(2)/\tau)/\langle \tau_0, \xi_1, \tau_1 \rangle$ with lowest degree is $\xi_2$ in degree $6$, we have that all the rows $E^5_{*,t,*}$ for $1 \leq t \leq 5$ are null.
\subsection{The induction step}
We are now ready to present a general induction argument. In step $k  \geq  2$ we study the behaviour of the spectral sequence at the $E^k$ page, determine the structure of $\pi_{k,*}(MHH(M\mathbb{Z}/2)/\tau)$ and describe the appearance of the $E^{k+1}$ page. 

More precisely, at step $k$, we suppose that the $E^k$ page of the spectral sequence decomposes as a tensor product over $\mathbb{F}_2$:
\[
	E^k_{\star, \bullet, *} \cong E^k_{\star, 0, *} \otimes_{\mathbb{F}_2} E^k_{0, \bullet, *}
\]

If $k=2^{j+1}$, $j  \geq  0$ an integer, we can specify isomorphisms: 
\begin{gather*}
	E^{2^{j+1}}_{*, 0, *} \cong \pi_{*,*}(MHH(M\mathbb{Z}/2)/\tau)/\langle \upgamma_i\mu_0, \ldots, \upgamma_i \mu_{j-1}, \lambda_1, \ldots \lambda_j\rangle_{i  \geq  1} \\
	E^{2^{j+1}}_{0, *, *} \cong \pi_{*,*}(\mathcal{A}(2)/\tau)/\langle \tau_0, \ldots, \tau_{j-1}, \xi_1, \ldots \xi_j\rangle
\end{gather*}
and an injection:
\begin{equation*}
    \left ( \bigotimes_{h=0}^{j-1} \Gamma_{\mathbb{F}_2}(\mu_h) \right ) \otimes_{\mathbb{F}_2} \left ( \bigotimes_{h=1}^{j}  \Lambda_{\mathbb{F}_2}(\lambda_h) \right )  \hookrightarrow \pi_{*,*}(MHH(M\mathbb{Z}/2)/\tau)
\end{equation*}
which is itself an isomorphism up to degree $2^{j+1}-1$.

Otherwise, if $2^{j}$ is the largest power of $2$ with $2^{j} < k < 2^{j+1}$, we have:
\begin{gather*}
	E^{k}_{*, 0, *} \cong \pi_{*,*}(MHH(M\mathbb{Z}/2)/\tau)/\langle \upgamma_i\mu_0, \ldots, \upgamma_i \mu_{j-1}, \lambda_1, \ldots \lambda_{j-1}\rangle_{i  \geq  1} \\
	E^{k}_{0, *, *} \cong \pi_{*,*}(\mathcal{A}(2)/\tau)/\langle \tau_0, \ldots, \tau_{j-1}, \xi_1, \ldots \xi_{j-1}\rangle
\end{gather*}
and an injection:
\begin{equation*}
    \left ( \bigotimes_{h=0}^{j-1} \Gamma_{\mathbb{F}_2}(\mu_h) \right ) \otimes_{\mathbb{F}_2} \left ( \bigotimes_{h=1}^{j-1}  \Lambda_{\mathbb{F}_2}(\lambda_h) \right )  \hookrightarrow \pi_{*,*}(MHH(M\mathbb{Z}/2)/\tau)
\end{equation*}
which is an isomorphism up to degree $k-1$.

\subsubsection{\texorpdfstring{$2^j < k < 2^{j+1}-1$}{2\^{}j < k < 2\^{}(j+1)-1}}

The previous proofs provide a base step for the induction, up to $k = 5$, so we may suppose $k  \geq  5$. Let us then prove that if the above hypothesis holds up to a certain number $k$, then it holds for $k+1$ as well. 

We first notice that if $2^j < k < 2^{j+1}-1$, for some integer $j$, then the proof is immediate, as $E^{k+1} \cong E^k$. By our induction hypothesis, in fact:
\begin{gather*}
	E^k_{\star, \bullet, *} \cong E^k_{\star, 0, *} \otimes_{\mathbb{F}_2} E^k_{0, \bullet, *}\\
	E^{k}_{0, *, *} \cong \pi_{*,*}(\mathcal{A}(2)/\tau)/\langle \tau_0, \ldots, \tau_{j-1}, \xi_1, \ldots \xi_{j-1}\rangle
	\cong \mathbb{F}_2[\tau_i,\xi_i]_{i \geq  j}/\langle\tau_i^2 \rangle
\end{gather*}
Notice that all the $\tau_i$ and $\xi_i$ with $i  \geq  j$ have degree at least $2^{j+1}-2$. From the description of the $E^k$ page as a tensor product, all rows $E^k_{*,t,*}$ are then null for $1 \leq t \leq 2^{j+1}-3$; as $k-1 < 2^{j+1}-2$ there is no space for non-trivial $d^k$ differentials exiting the horizontal line; at the same time, the decomposition as tensor product implies, via the Leibniz rule, that no class in any row of the $E^k$ page can be source for a non-trivial $d^k$ differential. So $E^{k+1} \cong E^k$: this confirms the decomposition as tensor product of the $E^{k+1}$ page and the isomorphisms:
\[\begin{gathered}
    E^{k+1}_{*,0,*} \cong E^k_{*,0,*} \cong \pi_{*,*}(MHH(M\mathbb{Z}/2)/\tau)/ \langle \upgamma_i(\mu_0), \ldots, \upgamma_i (\mu_{j-1}), \lambda_1, \ldots \lambda_{j-1}\rangle_{i  \geq  1}\\
    E^{k+1}_{0, *, *} \cong E^{k}_{0, *, *} \cong \pi_{*,*}(\mathcal{A}(2)/\tau)/\langle \tau_0, \ldots, \tau_{j-1}, \xi_1, \ldots \xi_{j-1}\rangle
\end{gathered}\]

Focusing on the zeroth horizontal line,  
the inclusion morphism we have by the previous induction step:
\[
    \begin{adjustbox}{max width=\textwidth, center}
        $\displaystyle 
        \left ( \bigotimes_{h=0}^{j-1} \Gamma_{\mathbb{F}_2}(\mu_h) \right ) \otimes_{\mathbb{F}_2} \left ( \bigotimes_{h=1}^{j-1}  \Lambda_{\mathbb{F}_2}(\lambda_h) \right )  \hookrightarrow \pi_{*,*}(MHH(M\mathbb{Z}/2)/\tau)
        $
    \end{adjustbox}
\]
must be an isomorphism in degree $k$ as well, as the image of the module $E^{k}_{k,0,*}$ cannot be altered by our first quadrant spectral sequence in pages $E^s$ with $s > k$ and has to vanish at the $E^{\infty}$ page. In addition, this implies that the whole column $E^{k+1}_{k,*,*} \cong 0 $ is trivial. 
\subsubsection{\texorpdfstring{$k=2^{j+1}-1$}{k=2\^{}{j+1}-1}}

Suppose now that $k=2^{j+1}-1$. The goal of this step is to introduce a generator $\lambda_j \in \pi_{2^{j+1}-1,2^{j}-1}(MHH(M\mathbb{Z}/2)/\tau)$, which survives up to the $E^{2^{j+1}-1}$ page and supports a differential $d^{2^{j+1}-1}(\lambda_j)=\xi_j$. As we shall see, this argument closely resembles that for $\lambda_1$.

First of all, we notice that the non-trivial elements in 
\[
    E^{2^{j+1}-1}_{0,*,*} 
    \cong \pi_{*,*}(\mathcal{A}(2)/\tau)/\langle \tau_0, \ldots, \tau_{j-1}, \xi_1, \ldots \xi_{j-1}\rangle
    \cong \mathbb{F}_2[\tau_i,\xi_i]_{i \geq  j}/\langle\tau_i^2 \rangle
\]
with the smallest degrees are:
\begin{gather*}
    \xi_j \text{ with } |\xi_j|=(2^{j+1}-2,2^{j}-1)\\
    \tau_j \text{ with } |\tau_j|=(2^{j+1}-1,2^{j}-1)\\
    \xi_j^2 \text{ with } |\xi_j^2|=(2^{j+2}-4,2^{j+1}-2)\\
    \xi_j\tau_j \text{ with } |\xi_j\tau_j|=(2^{j+2}-3,2^{j+1}-2)\\
    \xi_{j+1} \text{ with } |\xi_{j+1}|=(2^{j+2}-2,2^{j+1}-1).
\end{gather*}

There must then be a unique class $\lambda_j \in E^{2^{j+1}-1}_{2^{j+1}-1,0,2^j-1}$ with $d^{2^{j+1}-1}(\lambda_j) = \xi_j$, since $\xi_j$ must vanish on the $E_{\infty}$ page, and we cannot produce permanent cycles. 
Then every element in $E^{2^{j+1}-1}_{2^{j+1}-1,*,*}$ inherits a non-trivial $d^{2^{j+1}-1}$-differential, given by the Leibniz rule:
\[
d^{2^{j+1}-1}(\lambda_j \alpha)= \xi_j \alpha, 
\]
for any $\alpha \in E^{2^{j+1}-1}_{0,*,*} \cong \pi_{*,*}(\mathcal{A}(2)/\tau)/\langle \tau_0, \ldots, \tau_{j-1}, \xi_1, \ldots \xi_{j-1}\rangle$.

This differential is moreover non-trivial because, by the description of the $E^{2^{j+1}-1}$-page coming from the induction hypothesis, $\xi_j$ behaves as a polynomial variable in this page. 

One then sees that $\lambda_j^2=0$: in fact, $d^{2^{j+1}-1}(\lambda_j^2)=0$ because the base field has characteristic 2. By induction, the columns $E^{2^{j+1}-1}_{i,*,*}$ are trivial for $1 \leq i \leq 2^{j+1}-2$, so the only possible other non-trivial differential  for $\lambda_j^2$ is a transgressive one, that is, a $d^{2^{j+2}-2}$ differential. Now, the only non-trivial class with total degree $2^{j+2}-3$ on the zeroth column of the $E^{2^{j+1}-1}$-page is $\xi_{j}\tau_j$. But we already have a non-trivial $d^{2^{j+1}-1}$ differential:
\[
    \lambda_j\tau_j\mapsto \xi_j \tau_j
\]
so $\xi_j \tau_j$ is null after the $E^{2^{j+1}-1}$ page. Hence, $\lambda_j^2=0$ by convergence reasons.

We can now show that any element $\alpha \in E^{2^{j+1}-1}_{*,0,*}$ is of the form $\alpha=\beta \lambda_j +\delta$, where $\beta$ and $\delta$ are $d^{2^{j+1}-1}$ cycles, so that $d^{2^{i+j}-1}(\alpha)=\beta\xi_j$, using more or less the same argument as in the $E^3$ page. This means that we can identify the source of all the non-trivial $d^{2^{j+1}-1}$-differentials leaving the zeroth row with the multiples of $\lambda_j$; observe incidentally that, because of the tensor product decomposition of the $E^{2^{j+1}-1}$-page, any $\beta \lambda_j +\delta$ as above has, for $\beta \neq 0$, a non-trivial $d^{2^{j+1}-1}$-differential. To prove our claim, observe that if $d^{2^{j+1}-1}(\alpha)=0$ we are done; on the other hand, let $d^{2^{j+1}-1}(\alpha)=\beta\xi_j \neq 0$; one must then have $d^{2^{j+1}-1}(\beta)=0$, otherwise, if $d^{2^{j+1}-1}(\beta)=\omega \xi_{j} \neq 0$:
\[
    0=d^{2^{j+1}-1}(d^{2^{j+1}-1}(\alpha)))=d^{2^{j+1}-1}(\beta\xi_j)=\omega \xi_{j}^2 \neq 0
\]
because the tensor product decomposition of the $E^{2^{j+1}-1}$-page does not allow relations between $\omega \in E^{2^{j+1}-1}_{*,0,*}$ and $\xi_{j}^2 \in E^{2^{j+1}-1}_{0,*,*}$. Then: $d^{2^{j+1}-1}(\beta \lambda_j)=\beta\xi_j= d^{2^{j+1}-1}(\alpha)$, so $\delta = \alpha -\beta \lambda_j$ is the desired cycle. Hence, up to base change, we can identify all and only the multiples of $\lambda_j$ in $E^{2^{j+1}-1}_{*,0,*}$ with the sources of non-trivial $d^{2^{j+1}-1}$-differentials leaving the zeroth row. 

This behaviour propagates to the upper rows, again due to the tensor product decomposition of the page. In particular, if we identify rows with the element in $E^{2^{j+1}-1}_{0,*,*}$ that belongs to them, then those that are not multiple of $\xi_j$ will present the same behaviour of the zeroth row: a non-trivial $d^{2^{j+1}-1}$-differential out of all multiples of $\lambda_j$, and no non-trivial differential involving the other elements. Those rows that begin with a multiple of $\xi_j$ will instead present the same behaviour as the row beginning with $\xi_j$ itself: a non-trivial $d^{2^{j+1}-1}$ arising from each multiple of $\lambda_j$, a non-trivial $d^{2^{j+1}-1} $ hitting any other element. In particular, any multiple of $\xi_j$ disappears from the next page. 
No other element is involved in non-trivial $d^{2^{j+1}-1} $ differentials; otherwise, a similar behaviour would be found on the zeroth line $E^{2^{j+1}-1}_{*,0,*}$ as well. 

We can also exclude relations involving $\lambda_j$ and other elements of $E^{2^{j+1}-1}_{*,0,*}$ (and, since our induction hypothesis relates $ \pi_{*,*}(MHH(M\mathbb{Z}/2)/\tau)$ and $E^{2^{j+1}-1}_{*,0,*}$,  involving $\lambda_j$ and other elements of $ \pi_{*,*}(MHH(M\mathbb{Z}/2)/\tau)$), as follows. Suppose by contradiction that there is such a relation; as $\lambda_j^2=0$, this reduces to something of the form $\beta \lambda_j+\delta=0$ with $\beta,\,\delta \in E^{2^{j+1}-1}_{*,0,*}$. Since there is an identification between multiples of $\lambda_j$ and elements that have non-trivial $d^{2^{j+1}-1}$ differentials, it can be reduced to having $d^{2^{j+1}-1}(\beta)=d^{2^{j+1}-1}(\delta)=0$. Hence:
\[
    0=d^{2^{j+1}-1}(\beta \lambda_j+\delta)=\beta \xi_j.
\]
Again due to the tensor product decomposition of the page, one gets that $\beta=0$; hence also $\gamma=0$ as desired.

Putting everything together, we conclude that there are a commutative triangle:
\[
 \begin{tikzcd}
        E^{2^{j+1}}_{*,0,*} \arrow[r, hook] \arrow[rd, "\cong"] & 
        E^{2^{j+1}-1}_{*,0,*} \arrow[d, two heads] \\
        & E^{2^{j+1}-1}_{*,0,*}/\langle \lambda_j \rangle
    \end{tikzcd}    
\]
and an isomorphism:
\[
    E^{2^{j+1}}_{0,*,*}\cong E^{2^{j+1}-1}_{0,*,*}/ \langle \xi_j \rangle,
\]
and that:
\begin{equation}\label{eqn:concl.step.2(j+1)-1.a}
    E^{2^{j+1}}_{\star,\bullet ,*} \cong E^{2^{j+1}}_{\star,0,*} \otimes_{\mathbb{F}_2} E^{2^{j+1}}_{0, \bullet,*}.
\end{equation}

Combined with the previous induction steps, this gives the isomorphisms:
\begin{gather}
    E^{2^{j+1}}_{*, 0, *} \cong  \pi_{*,*}(MHH(M\mathbb{Z}/2)/\tau)/\langle \upgamma_i(\mu_0), \ldots, \upgamma_i (\mu_{j-1}), \lambda_1, \ldots \lambda_j\rangle_{i  \geq  1} \label{eqn:concl.step.2^j+1-1.b}\\
    E^{2^{j+1}}_{0, *, *} \cong \pi_{*,*}(\mathcal{A}(2)/\tau)/\langle \tau_0, \ldots, \tau_{j-1}, \xi_1, \ldots \xi_j\rangle\label{eqn:concl.step.2^j+1-1.c}
\end{gather}
and the injection, which is an isomorphism up to degree $2^{j+1}-1$:
\begin{equation}\label{eqn:concl.step.2^j+1-1.d}
 \left ( \bigotimes_{h=0}^{j-1} \Gamma_{\mathbb{F}_2}(\mu_h) \right ) \otimes_{\mathbb{F}_2} \left ( \bigotimes_{h=1}^{j}  \Lambda_{\mathbb{F}_2}(\lambda_h) \right )    \hookrightarrow  \pi_{*,*}(MHH(M\mathbb{Z}/2)/\tau)
\end{equation}
we were looking for.
\subsubsection{\texorpdfstring{$k=2^{j+1}$}{k=2\^{}{j+1}}}
The only case left to analyse is when $k=2^{j+1}$ is a power of two. The cases $j=0$ and $j=1$ were already treated as base step for this argument; the induction step will roughly resemble the case $j=1$ (in fact, it will be a sort of generalisation of that), as the case $j=0$ was greatly simplified by having all the elements $\upgamma_i \mu_0$ of weight 0.

Our initial assumption is the conclusion of the previous step, in particular from equation \ref{eqn:concl.step.2^j+1-1.c} we see that the non trivial elements in $E^{2^{j+1}}_{0,*,*}$ with smallest degrees are:
\begin{gather*}
    \tau_j \text{ with } |\tau_j|=(2^{j+1}-1,2^{j}-1)\\
    \xi_{j+1} \text{ with } |\xi_{j+1}|=(2^{j+2}-2,2^{j+1}-1)\\
    \tau_{j+1} \text{ with } |\tau_{j+1}|=(2^{j+2}-1,2^{j+1}-1)\\
    \tau_j\xi_{j+1} \text{ with } |\tau_j\xi_{j+1}|=(3(2^{j+1}-1),3 \cdot 2^{j}-2).
\end{gather*}
We know that columns $E^{2^{j+1}}_{i,*,*}$ with $1 \leq i \leq 2^{j+1}-1$ are empty, as well as rows $E^{2^{j+1}}_{*,h,*}$ for $1 \leq h \leq 2^{j+1}-2$. The first conclusion is the presence of an element $\mu_j \in E^{2^{j+1}}_{2^{j+1},0,2^j-1}$ and a differential:
\[
    \mu_j \xmapsto{d^{2^{j+1}}} \tau_j
\]
As nothing is left in $E^{2^{j+1}}_{0,2^{j+1}-1,*}$, we conclude that $\mu_j$ is the sole generator of $E^{2^{j+1}}_{2^{j+1},0,*}$. 

We also see that $(\mu_j)^2=0$: by characteristic reasons, $d^{2^{j+1}}((\mu_j)^2)=0$. As $E^{2^{j+1}}_{1,*,*}\cong \ldots \cong E^{2^{j+1}}_{2^{j+1}-1,*,*} \cong 0$, the only possible target of a non-trivial differential out of $(\mu_j)^2$ is in the zeroth column; by degree, it lies in the module generated by $\tau_{j+1}$. But the weight of $(\mu_j)^2$ is $2(2^j-1)=2^{j+1}-2$, and  $E^{2^{j+1}}_{0,2^{j+2}-1,2^{j+1}-2}\cong 0$. Hence $(\mu_j)^2=0$ by the convergence requirements.

On the other hand, also $\mu_j \tau_j$ has to vanish. Observe that its $d^{2^{j+1}}$-differential is trivial: $d^{2^{j+1}}(\mu_j \tau_j)=\tau_j^2=0$. As the spectral sequence is first-quadrant, our only option is to have an element $\upgamma_2 (\mu_j) \in E^{2^{j+1}}_{2^{j+2},0, 2^{j+1}-2}$ and a differential:
\[
    \upgamma_2 (\mu_j) \xrightarrow{d^{2^{j+1}}} \mu_j \tau_j.
\]

We proceed now with an induction argument, that introduces a set of the divided powers $\upgamma_l(\mu_j)$ for $l \in \mathbb{N}$ and shows that they support a non-trivial differential $d^{2^{j+1}}(\upgamma_l(\mu_j))= \upgamma_{l-1}(\mu_j) \tau_j$. Concurrently, we identify the origin of all $d^{2^{j+1}}$-differentials with the divided powers of $\mu_j$. 

In detail, let $f \in \mathbb{N}$, $f \geq  2$; 
at the step $f-1$ we suppose to know the existence of the classes $\upgamma_l (\mu_j) \in E^{2^{j+1}}_{l(2^{j+1}),0,l(2^j-1)}$ for $1 \leq l \leq f-1$ and of the associated differential $d^{2^{j+1}}(\upgamma_l(\mu_j))= \upgamma_{l-1}(\mu_j) \tau_j$. They behave as divided powers in characteristic 2: this means that, if we express $l$ in base $2$ as $l=\sum_{i \geq 0} a_i 2^i$, with $a_i \in \{0,1\}$, we can identify:
\[
    \upgamma_l(\mu_j) = \prod_{i \geq 0} (\upgamma_{2^i}(\mu_j))^{a_i}. 
\]
We also know that they all square to zero, but the one indexed by the largest power of two smaller than or equal to $f-1$.

We also suppose that each class $\alpha \in E^{2^{j+1}}_{k,0,*}$, with $k \leq 2^{j+1}(f-1)$, can be identified with a finite sum:
\[
    \alpha = \sum_{i=0}^{f-1} \upgamma_{i}(\mu_{j})\beta_i
\]
with $d^{2^{j+1}}(\beta_i)=0$. In other words, every $d^{2^{j+1}}$-differential is induced by the divided powers of $(\mu_j)$. Moreover, any such sum is trivial if and only if each $\beta_i$ is: we conclude, as in remark \ref{rmk:precise.independence.mu1}, that the only algebra relations in $E^{2^{j+1}}_{*,0,*}$ (and consequently in $\pi_{*,*}(MHH(M\mathbb{Z}/p)/\tau)$, given what happens in the previous pages) involving the $\upgamma_i(\mu_j)$ are those induced by the divided power structure.



\begin{rmk}\label{rmk:consequence.of.ind.step.muj}
    As proven in remark \ref{rmk:consequence.of.ind.step.mu1} for the divided powers of $\mu_1$, one can show that the induction hypothesis for $f-1$ implies that the $d^{2^{j+1}}$ differential surjects onto $E^{2^{j+1}}_{h, 2^{j+1}-1,*}$ for $h \leq 2^{j+1}(f-2)$. Because of the above statements, any element in this region can in fact be written as:
    \[
        \sum_{i=0}^{f-2}\upgamma_i(\mu_j) \beta_i \tau_j
    \]
    with $\beta_i \in E^{2^{j+1}}_{*,0,*}$ such that $d^{2^{j+1}}(\beta_i)=0$.
    Now, observe that $h+2^{j+1} \leq 2^{j+1}(f-1)$: the induction hypothesis states that there exist an element
    \[
        \sum_{i=0}^{f-2}\upgamma_{i+1}(\mu_j) \beta_i \in E^{2^{j+1}}_{h+2^{j+1}, 0,*}
    \]
    and a differential:
    \[
        \sum_{i=0}^{f-2}\upgamma_{i+1}(\mu_j) \beta_i \xrightarrow{d^{2^{j+1}}} \sum_{i=0}^{f-2}\upgamma_i(\mu_j) \beta_i \tau_j.
    \]
    Given the tensor product decomposition of the $E^{2^{j+1}}$-page, a similar deduction can be applied to all rows beginning with a multiple of $\tau_j$: at stage $f-1$ we see that they vanish up to degree $2^{j+1}(f-2)$. Symmetrically, any multiple of some divided power of $(\mu_j)$ (hence any homogeneous sum of them) in degree smaller than or equal to $2^{j+1}(f-2)$ disappears from the $E^{2^{j+1}+1}$-page, as it is either the target or the source of a non-trivial $d^{2^{j+1}}$-differential. The first case corresponds to the non-trivial multiples of $\tau_j$, while in the second we find the non $\tau_j$-divisible elements.
\end{rmk}

We proceed now with the induction argument: we already saw some base steps above, so suppose the hypothesis holds for some integer $f-1$; we wish to prove it for the next integer $f$. We will distinguish two cases.

If $f$ is not a power of 2 then $\upgamma_f (\mu_j)$ is just given by a product of already introduced classes, and the Leibniz rule tells us that $d^{2^{j+1}}(\upgamma_f (\mu_j))=\upgamma_{f-1} (\mu_j) \tau_j$. This was proven in lemma \ref{lmm:leibniz.on.div.pow} when $j=1$, and the proof for generic $j$ is analogous. It squares to zero since at least one of its factors is known to square to zero. 

Let now $\alpha \in E^{2^{j+1}}_{k,0,*}$, with $k \leq 2^{j+1} f$; we would like to have:
\[
    \alpha = \sum_{i=0}^{f} \upgamma_{i}(\mu_{j})\beta_i
\]
with $d^{2^{j+1}}(\beta_i)=0$. If $d^{2^{j+1}}(\alpha)=0$ we are done; if $d^{2^{j+1}}(\alpha)\neq0$, we can write, by the previous inductive step (the first degree of $d^{2^{j+1}}(\alpha)$ is at most $2^{j+1}(f-1)$) and the tensor product decomposition of the $E^{2^{j+1}}$-page:
\[
    d^{2^{j+1}}(\alpha)= \left( \sum_{i=0}^{f-1} \upgamma_{i}(\mu_{j})\beta_i \right) \tau_j
\]
with $d^{2^{j+1}}(\beta_i)=0$. Consider then the element $\sum_{i=0}^{f-1} \upgamma_{i+1}(\mu_{j})\beta_i$; its $d^{2^{j+1}}$-differential is:
\[
    d^{2^{j+1}}\left (\sum_{i=0}^{f-1} \upgamma_{i+1}(\mu_{j})\beta_i \right)= \left( \sum_{i=0}^{f-1} \upgamma_{i}(\mu_{j})\beta_i \right) \tau_j = d^{2^{j+1}}(\alpha)
\]
Hence $\alpha - \sum_{i=0}^{f-1} \upgamma_{i+1}(\mu_{j})\beta_i$ is a $ d^{2^{j+1}}$-cycle, as required.

Next, suppose we have a homogeneous equation in degree smaller than or equal to $2^{j+1}f$:
\[
    \sum_{l=0}^{f} \beta_l \upgamma_l (\mu_j)=0.
\]
with $d^{2^{j+1}}(\beta_l)=0$. Apply the $d^{2^{j+1}}$-differential:
\[
    0=\sum_{l=1}^{f} \beta_l \upgamma_{l-1} (\mu_j) \tau_j
\]
which, by the product decomposition on the $E^{2^{j+1}}$ page implies:
\[
    \sum_{l=1}^{f} \beta_l \upgamma_{l-1} (\mu_j)=0
\]
This homogeneous equation happens in a degree smaller than or equal to $2^{j+1}(f-1)$, so, by induction hypothesis, one must have $\beta_1=\ldots=\beta_l=0$; then also $\beta_0=0$, as required.

As one might expect from the base step, whenever $f=2^i$ is a power of two the proof gets more involved. In fact, by the presentation of divided powers in characteristic two, at powers of two we encounter algebraically independent elements. 
To see that this is the case in our spectral sequence, we will in particular prove:
\begin{itemize}
    \item No polynomial in the $\upgamma_l (\mu_j)$, for $l  <  2^i$, differentiates to $\upgamma_{2^i-1} (\mu_j) \tau_l$.
    \item All differentials out of $\upgamma_{2^i-1} (\mu_j) \tau_l$ are trivial differential.
    \item All differentials out of $(\upgamma_{2^{i-1}} (\mu_j))^2$ are trivial differential.
\end{itemize}
The first two conclusions, combined with the convergence result for the spectral sequence, prove the existence of an element $\upgamma_{2^i}(\mu_j) \in E^{2^{j+1}}_{2^{i+j+1},0, 2^i (2^j-1)}$ and of a differential $\upgamma_{2^i}(\mu_j)\xrightarrow{d^{2^{j+1}}}\upgamma_{2^i-1}(\mu_j) \tau_j$. The third step, on the other hand, extends our knowledge of the algebra structure of the $E^{2^{j+1}}$-page, hence of $\pi_{*,*}(MHH(M\mathbb{Z}/p)/\tau)$. Each of the three steps will be proven by a contradiction argument. The proof of the remaining statements in this induction step are essentially similar to the case of $f$ not a power of $2$ we have just seen.

So, suppose that there is some element $x$, equal to a homogeneous polynomial in the $\upgamma_{l} (\mu_j)$, $1 \leq l \leq 2^i-1$, with $d^{2^{j+1}}(x)=\upgamma_{2^i-1} (\mu_j) \tau_l$. We know that $(\upgamma_{l}(\mu_j))^2=0$ for all $1 \leq l \leq 2^i-1$ but $l=2^{i-1}$, and that $d^{2^{j+1}}((\upgamma_{2^{i-1}}(\mu_j))^2)=0$ by characteristics, so we may suppose:
\[
     x= \sum_{l=1}^{2^i-1}\beta_l\upgamma_{l} (\mu_j).
\]
Observe that $x$ must lie in degree $2^i\cdot 2^{j+1}$, so all the $\beta_l$ have degree bounded above by $(2^i-1)2^{j+1}$; here we can apply the previous induction step and suppose without loss of generality that $d^{2^{j+1}}(\beta_l)=0$ for all $l$. So:
\[
     d^{2^{j+1}}(x)= \sum_{l=1}^{2^i-1}\beta_l\upgamma_{l-1} (\mu_j) \tau_j=\upgamma_{2^i-1} (\mu_j) \tau_j
\]
By the product decomposition of the $E^{2^{j+1}}$-page, we obtain the homogeneous equation:
\[
    \sum_{l=1}^{2^i-1}\beta_l\upgamma_{l-1} (\mu_j)-\upgamma_{2^i-1} (\mu_j) =0
\]
which sits in degree $(2^i -1)2^{j+1}$; by induction hypothesis, such an equation is impossible. Hence, no polynomial in the already introduced divided powers can differentiate to $\upgamma_{2^i-1} (\mu_j) \tau_j$.

Next, we show that $\upgamma_{2^i-1} (\mu_j) \tau_j$ cannot support a non-trivial differential.
It is worth looking again at the elements with the lowest degrees in:
\begin{equation}\label{eqn:A/tau_j-1,xi_j}
\begin{aligned}
    E^{2^{j+1}}_{0, *, *} &\cong \pi_{*,*}(\mathcal{A}(2)/\tau)/\langle \tau_0, \ldots, \tau_{j-1}, \xi_1, \ldots \xi_j\rangle \\
    &\cong \mathbb{F}_2[\tau_i,\xi_{i+1}]_{i \geq j}/\langle \tau_i^2 \rangle.
\end{aligned}
\end{equation}
    
Those are:
\begin{gather*}
    \tau_j \text{ with } |\tau_j|=(2^{j+1}-1,2^{j}-1)\\
    \xi_{j+1} \text{ with } |\xi_{j+1}|=(2^{j+2}-2,2^{j+1}-1)\\
    \tau_{j+1} \text{ with } |\tau_{j+1}|=(2^{j+2}-1,2^{j+1}-1)\\
    \tau_j\xi_{j+1} \text{ with } |\tau_j\xi_{j+1}|=(3(2^{j+1}-1),3 \cdot 2^{j}-2).
\end{gather*}
In particular, notice that the second shortest differential (after the $d^{2^{j+1}}$) that can leave the horizontal line is a $d^{2^{j+2}-1}$; moreover, the only element with weight $2^{j}-1$ is $\tau_j$, all the other elements have weight at least $2^{j+1}-1$. 

If one supposes by contradiction that there is some $k_1 \in \mathbb{N}$, such that $d^{k_1}(\upgamma_{2^i-1} (\mu_j) \tau_j)$ is non trivial, one must have $k_1  \geq  2^{j+1}+1$, as all the differentials up to the $d^{2^{j+1}}$ are trivial on this element. The degrees of $\upgamma_{2^i-1} (\mu_j) \tau_j$ are:
\[
    |\upgamma_{2^i-1} (\mu_j) \tau_j|=((2^i-1)2^{j+1},2^{j+1}-1,2^i(2^j-1))
\]
so the image via $d^{k_1}$ in the $E^{k_1}$ has degrees: 
\[
|d^{k_1}(\upgamma_{2^i-1} (\mu_j) \tau_j)|=((2^i-1)2^{j+1}-k_1,2^{j+1}+k_1-2,2^i(2^j-1)).
\]
Observe that the vertical degree is $2^{j+1}+k_1-2 \geq 2^{j+1}+2^{j+1}+1-2= 2^{j+2}-1$: the presence of $\tau_j+1$ in such degree implies that such differential could be non-trivial, hence, at least from this initial analysis, we cannot refine the lower bound for $k_1$.

As in remark \ref{rmk:representatives.in.E4}, we can identify $d^{k_1}(\upgamma_{2^i-1} (\mu_j) \tau_j)$ with a product $\alpha_1 \omega_1$ in the $E^{2^{j+1}}$-page, such that $\alpha_1 \in E^{2^{j+1}}_{*,0,*}$ and $\omega_1 \in E^{2^{j+1}}_{0,*,*}$. We will have in particular:
\begin{gather*}
    |\alpha_1|= ((2^i-1)2^{j+1}-k_1,2^i(2^j-1)-w_1)\\
    |\omega_1|= (2^{j+1}+k_1-2,w_1).
\end{gather*}
Given the structure of $E^{2^{j+1}}_{0,*,*}$ appearing in equation \ref{eqn:A/tau_j-1,xi_j}, we must have $w_1  \geq  2^{j+1}-1$. 

We will first exclude the case $k_1=(2^i-1)2^{j+1}$, or equivalently, $\alpha_1=1$ and $w_1=2^i(2^j-1)$, and then have a look at the other possibilities. We will use the following:

\begin{lemma}\label{lmm:d/w.for.E.2.j+1}
Let $x$ be a homogeneous element in:
\[
    E^{2^{j+1}}_{0,*,*} \cong \pi_{*,*}(\mathcal{A}(2)/\tau)/\langle \tau_0, \ldots, \tau_{j-1}, \xi_1, \ldots \xi_j\rangle
\]
If $|x|=(d,w)$, then $2 \leq \frac{d}{w} \leq 2+ \frac{1}{2^j-1}$.
\end{lemma}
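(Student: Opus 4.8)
The strategy mirrors the proof of Lemma~\ref{lmm:d/w.for.E4} exactly, but now we work with the larger quotient $\pi_{*,*}(\mathcal{A}(2)/\tau)/\langle \tau_0, \ldots, \tau_{j-1}, \xi_1, \ldots, \xi_j\rangle \cong \mathbb{F}_2[\tau_i, \xi_{i+1}]_{i \geq j}/\langle \tau_i^2\rangle$. First I would write a general homogeneous monomial as
\[
    x = \tau_j^{\varepsilon_j} \tau_{j+1}^{\varepsilon_{j+1}} \cdots \tau_k^{\varepsilon_k} \xi_{j+1}^{l_{j+1}} \cdots \xi_{k}^{l_{k}}
\]
with $\varepsilon_i \in \{0,1\}$ and $l_i \in \mathbb{N}$, and record its bidegree. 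Using $|\tau_i| = (2^{i+1}-1, 2^i-1)$ and $|\xi_i| = (2^{i+1}-2, 2^i-2)$, the total degree is $\sum_i \varepsilon_i(2^{i+1}-1) + \sum_i l_i(2^{i+1}-2)$ and the weight is $\sum_i \varepsilon_i(2^i-1) + \sum_i l_i(2^i-2)$, where all sums run over $i \geq j$ and we set $l_j = 0$. As in the earlier lemma, I would rewrite the degree contribution of the index $i$ as $(\varepsilon_i + l_i)(2^{i+1}-2) + \varepsilon_i$ and the weight contribution as $(\varepsilon_i + l_i)(2^i - 1) - l_i$, so that each piece simplifies.

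Next comes the summand-by-summand comparison. For a fixed index $i \geq j$ with $\varepsilon_i + l_i > 0$, I would bound the ratio
\[
    \frac{(\varepsilon_i + l_i)(2^{i+1}-2) + \varepsilon_i}{(\varepsilon_i + l_i)(2^i - 1) - l_i}.
\]
The lower bound $2$ is the easy half: since $2^{i+1} - 2 = 2(2^i - 1)$ and the numerator correction $\varepsilon_i \geq 0$ only increases things while the denominator correction $-l_i$ only decreases them, the ratio is at least $\frac{2(2^i-1)(\varepsilon_i+l_i)}{(2^i-1)(\varepsilon_i+l_i)} = 2$ after checking the correction terms do not spoil this (when $\varepsilon_i = l_i = 0$ the summand is absent; otherwise $\varepsilon_i(2^i-1) \geq l_i$ because $i \geq j \geq 1$, forcing the numerator$-2\cdot$denominator to be nonnegative). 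The upper bound is where the improvement over Lemma~\ref{lmm:d/w.for.E4} appears: I expect the worst case to be $\varepsilon_i = 1$, $l_i = 0$, $i = j$, giving $\frac{2(2^j-1)+1}{2^j-1} = 2 + \frac{1}{2^j - 1}$, and I would verify that for any other admissible $(\varepsilon_i, l_i, i)$ the excess $\frac{\varepsilon_i - 2l_i \cdot(\text{something})}{\cdots}$ is smaller — the key monotonicity being that $2^i - 1$ grows with $i$, so larger $i$ only shrinks the correction, and adding $\xi$-powers ($l_i > 0$) also shrinks it.

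Finally I would invoke the elementary fact (already used in Lemma~\ref{lmm:d/w.for.E4}) that if each term of a sum of fractions with positive denominators lies in an interval $[a, b]$, then so does the fraction formed by summing numerators over summing denominators; this passes the per-index bound $2 \leq \text{ratio}_i \leq 2 + \frac{1}{2^j-1}$ to the full ratio $d/w$. The main obstacle I anticipate is bookkeeping the correction terms $+\varepsilon_i$ in the numerator and $-l_i$ in the weight simultaneously: one must be careful that the weight correction $-l_i$ does not make a denominator zero or negative (it cannot, since $(\varepsilon_i+l_i)(2^i-1) \geq l_i(2^i-1) \geq l_i$ for $i \geq 1$, with equality only in degenerate cases handled separately), and that the two corrections together still give a clean closed-form extremum. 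No genuinely new idea beyond the $E^4$ case is needed; it is the same argument with sharper arithmetic because the smallest surviving generators now have larger weight.
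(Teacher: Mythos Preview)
Your approach is essentially identical to the paper's: write a monomial, compute its bidegree, bound the per-index ratio, and pass to the full sum. However, you have a factual slip in the bidegrees that propagates through your sketch. You write $|\xi_i| = (2^{i+1}-2,\, 2^i-2)$, but at $p=2$ the correct weight is $|\xi_i| = (2^{i+1}-2,\, 2^i-1)$ (the same as $\tau_i$ up to the degree shift); this is consistent with, e.g., $\xi_1 \in E^3_{0,2,1}$ and with the odd-prime formula $|\xi_i|=(2p^i-2,\,p^i-1)$. Admittedly the paper itself contains this typo in the displayed degrees inside the proof of Lemma~\ref{lmm:d/w.for.E4}, but its actual computation uses the correct weight, writing $w=\sum_i(\epsilon_i+l_i)(2^i-1)$ with no $-l_i$ correction.

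Once you fix this, all the difficulties you anticipate evaporate: the weight at index $i$ is simply $(\varepsilon_i+l_i)(2^i-1)$, there is no denominator correction, and the per-index ratio is exactly
\[
2+\frac{\varepsilon_i}{(\varepsilon_i+l_i)(2^i-1)}\,,
\]
which lies in $\bigl[2,\,2+\tfrac{1}{2^j-1}\bigr]$ immediately since $i\ge j$. Your worry about denominators becoming nonpositive, and the ``correction terms together still giving a clean closed-form extremum,'' are artifacts of the wrong weight; with the right one the argument is a one-line estimate, just as in the paper.
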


We postpone the proof of this at the end of the induction argument.

By the induction hypothesis (in particular remark \ref{rmk:consequence.of.ind.step.muj}) $\omega_1$ can be seen as an element of $\pi_{*,*}(\mathcal{A}(2)/\tau)/\langle \tau_0, \ldots, \tau_{j-1}, \xi_1, \ldots \xi_j\rangle$; we can then apply lemma \ref{lmm:d/w.for.E.2.j+1} to get constraints on its degree and weight. Suppose first that $d^{k_1}$ hits the zeroth column, with $|\omega_1|= (2^{j+1}+(2^i-1)2^{j+1}-2,2^i(2^j-1))=(d_{\omega_1},w_{\omega_1})$. Then:
\begin{align*}
    \frac{d_{\omega_1}}{w_{\omega_1}}&=
    \frac{2^{j+1}+(2^i-1)2^{j+1}-2}{2^i(2^j-1)}=
    \frac{2^{1+j+1}-2}{2^{i+j}-2^i)}\\ &=
    \frac{2^{i+j+1}-2^{i+1}+2^{i+1}-2}{2^{i+j}-2^i)}\\ &=
    2+\frac{2^{i+1}-2}{2^i(2^j-1)}=
    2+\frac{2-\frac{2}{2^i}}{2^j-1}
\end{align*}

However,
\[
    \frac{2-\frac{2}{2^i}}{2^j-1}> \frac{1}{2^j-1}
\]
for any $i>1$, which is incompatible with lemma \ref{lmm:d/w.for.E.2.j+1}, providing the desired contradiction.

Suppose now $k_1 < (2^i-1)2^{j+1}$. By the convergence of the spectral sequence, $\alpha_1$ must support a non-trivial differential. This differential must be longer than a $d^{2^{j+1}}$: in fact,
\begin{equation}\label{eq:deg(alpha1)}
    deg(\alpha_1)=(2^i-1)2^{j+1}-k_1 < (2^i-2)2^{j+1}
\end{equation} 
so the induction hypothesis assures that, if $\alpha_1$ supported a non-trivial $d^{2^{j+1}}$-differential, also $\alpha_1 \omega_1$ would disappear after the $E^{2^{j+1}}$ page. So let $\alpha_1$ support a $d^{k_2}$ differential; by the observations after \ref{eqn:A/tau_j-1,xi_j}, $k_2  \geq  2^{j+2}-1$. Again, pick $\alpha_2 \in E^{2^{j+1}}_{*,0,*}$ and $\omega_2 \in E^{2^{j+1}}_{0,*,*}$ such that their product $\alpha_2 \omega_2$ is non-zero in the $E^{k_2}$ page and has the same degrees as $d^{k_2}(\alpha_1)$. 
We have:
\begin{gather*}
    |\alpha_2|= ((2^i-1)2^{j+1}-k_1-k_2,2^i(2^j-1)-w_1-w_2)\\
    |\omega_2|= (k_2-1,w_2).
\end{gather*}
Again by what was said after \ref{eqn:A/tau_j-1,xi_j}, $w_2  \geq  2^{j+1}-1$.

Repeat the last passage $(n-2)$-times, until the $d^{k_n}$-differential of $\alpha_{n-1}$ reaches the zeroth column; we associate to it the element $\omega_n \in E^{2^{j+1}}_{0,*,*}$. See figure \ref{fig:induction.E2j+1} for a visualisation of the elements and differentials involved.
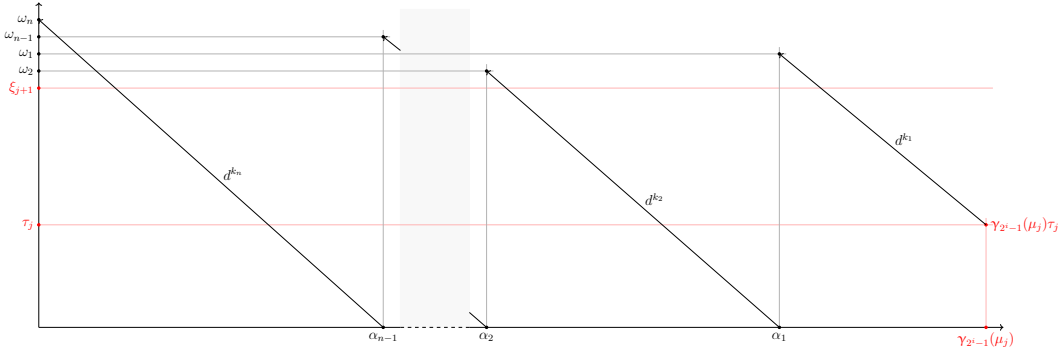
\begin{figure}[htb]
    \centering
    \adjustbox{max width=\textwidth, center}{\begin{tikzpicture}
        \filldraw[almostwhite] (10.5,0) rectangle (12.5,9.3);

        \draw[red!30] (0,3) -- (27.7,3);
        \draw[red!30] (27.5,0) -- (27.5,3.2);
        
        \draw[red!30] (0,7) -- (27.7,7); 

        \draw[gray!60] (0,8) -- (21.7,8);
        \draw[gray!60] (21.5,0) -- (21.5,8.2);

        \draw[gray!60] (0,7.5) -- (13.2,7.5);
        \draw[gray!60] (13,0) -- (13,7.7);

        \draw[gray!60] (0,8.5) -- (10.2,8.5);
        \draw[gray!60] (10,0) -- (10,8.7);

        \draw[black, ->] (0,0) -- (0,9.5);
        \draw[black] (0,0) -- (10.5,0);
        \draw[black, dashed] (10.5,0) -- (12.5,0);
        \draw[black, ->] (12.5,0) -- (28,0);

        {\filldraw[red]
        (27.5,3) circle (1pt) node[right]{\normalsize $\upgamma_{2^i-1}(\mu_j)\tau_j$}
        (27.5,0) circle (1pt) node[below]{\normalsize $\upgamma_{2^i-1}(\mu_j)$}
        (0,3) circle (1pt) node[left]{\normalsize $\tau_j$}
        (0,7) circle (1pt) node[left]{\normalsize $\xi_{j+1}$}
        ;}
        
        {\filldraw[black]
        (21.5,8) circle (1pt) 
        (21.5,0) circle (1pt) node[below]{\normalsize $\alpha_1$}
        (0,8) circle (1pt) node[left]{\normalsize $\omega_1$}
        
        (13,7.5) circle (1pt) 
        (13,0) circle (1pt) node[below]{\normalsize $\alpha_2$}
        (0,7.5) circle (1pt) node[left]{\normalsize $\omega_2$}
        
        (10,8.5) circle (1pt) 
        (10,0) circle (1pt) node[below]{\normalsize $\alpha_{n-1}$}
        (0,8.5) circle (1pt) node[left]{\normalsize $\omega_{n-1}$}
        
        (0,9) circle (1pt) node[left]{\normalsize $\omega_n$}
        ;}

        \draw[black, ->] (27.5,3) -- (21.5,8) node[right=6pt,  midway] {\normalsize  $d^{k_1}$};
        \draw[black, ->] (21.5,0) -- (13,7.5)  node[right=6pt,  midway] {\normalsize  $d^{k_2}$};
        \draw[black] (13,0) -- (12.5,0.4375);
        \draw[black, ->] (10.5,8.1) -- (10,8.5);
        \draw[black, ->] (10,0) -- (0,9)  node[right=6pt,  midway] {\normalsize  $d^{k_n}$};
        
    \end{tikzpicture}}
    \caption{A visualisation of what should happen if $\upgamma_{2^i-1}(\mu_j)\tau_j$ supported a non trivial differential.}
    \label{fig:induction.E2j+1}
\end{figure}
If $|\omega_n|=(0, k_n-1, w_n)$, one has equation on the degrees and weights of $\alpha_{n-1}$:
  \begin{gather*}
    deg(\alpha_{n-1})=(2^i-1)2^{j+1}-k_1-k_2- \ldots -k_{n-1}=k_n \\
    w(\alpha_{n-1})=2^i(2^j-1)-w_1-w_2-\ldots-w_{n-1}=w(\omega_n)=w_n.
\end{gather*}
So:
\begin{equation} \label{eqn:deg.wei.contr.mu.l}
  \begin{cases}
    (2^i-1)2^{j+1}=\sum_{l=1}^n k_l\\
    2^i(2^j-1)= \sum_{l=1}^n w_l.
\end{cases}  
\end{equation}

Observe that all the differentials $d^{k_2}, \ldots, d^{k_n}$ satisfy the same constraint: $k_l  \geq  2^{j+2}-1$ and consequently $w_l  \geq  2^{j+1}-1$.

There are also other conditions on these quantities, coming from lemma \ref{lmm:d/w.for.E.2.j+1}. By the induction hypothesis (in particular remark \ref{rmk:consequence.of.ind.step.muj}), as all the $\omega_l$ survive after the $E^{2^{j+1}}$-page, they can be seen as elements of $\pi_{*,*}(\mathcal{A}(2)/\tau)/\langle \tau_0, \ldots, \tau_{j-1}, \xi_1, \ldots \xi_j\rangle$. In particular, there are the already seen constraints for $\omega_1$: 
\[
    2^{j+1}+k_1-2\leq w_1\left ( 2+ \frac{1}{2^j-1} \right)
\]
and similar ones for all the other $\omega_l$, $l \geq 2$:
\[
    k_l-1\leq w_l\left ( 2+ \frac{1}{2^j-1} \right).
\]

Now, sum on all $l$:
\begin{gather*}
    2^{j+1}+k_1-2+ \sum_{l=2}^n (k_l-1)\leq \left ( 2+ \frac{1}{2^j-1} \right)\sum_{l=1}^n w_l \\
    2^{j+1}-n-1+ \sum_{l=1}^n k_l\leq \left ( 2+ \frac{1}{2^j-1} \right)\sum_{l=1}^n w_l
\end{gather*}
Plug in equations \ref{eqn:deg.wei.contr.mu.l}:
\begin{gather*}
    2^{j+1}-n-1+ (2^i-1)2^{j+1} \leq \left ( 2+ \frac{1}{2^j-1} \right)2^i(2^j-1)\\
    2^{j+1}-n-1+2^{i+j+1}-2^{j+1} \leq  2^{i+l+1}-2^{i+1}+2^i \\
    2^i-1 \leq n.
\end{gather*}
Recall that each $w_l  \geq  2^{j+1}-1$, so, from the second equality of \ref{eqn:deg.wei.contr.mu.l}:
\begin{gather*}
    2^i(2^j-1)= \sum_{l=1}^n w_l  \geq  n(2^{j+1}-1)  \geq  (2^i-1)(2^{j+1}-1)\\
    2^{i+j}-2^i  \geq  2^{i+j+1}-2^{i}-2^{j+1}+1\\
    0  \geq  2^{i+j}-2^{j+1}+1
\end{gather*}
which does not hold true for any integers $i \geq  1$ and $j$ positive. 

Hence there must be an element $\upgamma_{2^i}(\mu_j)$, algebraically independent of the previous divided powers of $\mu_j$, with:
\begin{gather*}
    |\upgamma_{2^i}(\mu_j)|=(2^{i+j+1}, 2^i(2^j-1)) \\
    d^{2^{j+1}}(\upgamma_{2^i}(\mu_j))=\upgamma_{2^i-1} (\mu_j) \tau_j.
\end{gather*}

We can now prove that every class $\alpha \in E^{2^{j+1}}_{k,0,*}$, for $k \leq 2^i 2^{j+1}$, is a finite sum $\alpha= \sum_{l=
0}^{2^i} \beta_l \cdot \upgamma_{l} (\mu_j)+ \delta$, with $d^{2^{j+1}}(\beta_l)=d^{2^{j+1}}(\delta)=0$. In particular, all non-trivial $d^{2^{j+1}}$-differential originate from the divided powers $\upgamma_{l}(\mu_j)$. Observe that $(\upgamma_{2^{i-1}} (\mu_j))^2$ sits in the largest degree possible, and $d^{2^{j+1}}((\upgamma_{2^{i-1}} (\mu_j))^2)=0$ by the Leibniz rule. Observe that we have not yet proven that $(\upgamma_{2^{i-1}} (\mu_j))^2=0$, hence, at this stage, $(\upgamma_{2^{i-1}} (\mu_j))^2$ could appear as a nonzero summand of $\beta_0$.

This assertion is proven as for $f$ not a power of 2: consider $\alpha$ as above; observe that we can assume $k  \geq  (2^i-1)2^{j+1}+1$, as smaller $k$'s are covered by the previous induction step. Let:
\[
    d^{2^{j+1}}(\alpha)= x \tau_j
\]
Since $x \in E^{2^{j+1}}_{*,0,*}$ has degree smaller then or equal to $(2^i-1)2^{j+1}$, by the induction hypothesis we can write:
\[
    x=\sum_{l=1}^{2^i} \beta_l \cdot \upgamma_{l-1} (\mu_j)
\]
with $d^{2^{j+1}}(\beta_l)=0$ for all $l$. Then:
\[
    d^{2^{j+1}}(\sum_{l=1}^{2^i} \beta_l \cdot \upgamma_{l} (\mu_j))= \sum_{l=1}^{2^i} \beta_l \cdot \upgamma_{l-1} (\mu_j) \tau_j =d^{2^{j+1}}(\alpha)
\]
so 
\[
    \alpha=\sum_{l=1}^{2^i} \beta_l \cdot \upgamma_{l} (\mu_j)+ \delta
\]
with $\delta$ a $d^{2^{j+1}}$ cycle.

\begin{rmk} \label{rmk:cons.of.ind.step.muj.2}
    We can then apply remark \ref{rmk:consequence.of.ind.step.muj}: fix any class $\alpha \omega \in E^{2^{j+1}}$ with $\alpha \in E^{2^{j+1}}_{l,0,*}$,  $l \leq  2^{j+1}(2^i-1)$, and $\omega \in E^{2^{j+1}}_{0,*,*}$, such that $d^{2^{j+1}}(\alpha) \neq 0$. Then either we have that $d^{2^{j+1}}(\alpha \omega) \neq 0$, or there exists an element $x \in E^{2^{j+1}}_{l+2^{j+1},*,*}$ with $d^{2^{j+1}}(x)= \alpha \omega$.
\end{rmk}

Next, one must prove that $(\upgamma_{2^{i-1}} (\mu_j))^2=0$ by showing that it cannot support any non-trivial differentials. The argument resembles the one used with $\upgamma_{2^i-1}(\mu_j)\tau_j$.

By contradiction, suppose $(\upgamma_{2^{i-1}} (\mu_j))^2$ supports a non-trivial $d^{k_1}$ differential. As $(\upgamma_{2^{i-1}} (\mu_j))^2$ is a square in the $E^{2^{j+1}}$ page, $k_1>2^{j+1}$;  by equation \ref{eqn:A/tau_j-1,xi_j} it must be $k_1  \geq  2^{j+2}-1$. As above, we identify $d^{k_1}((\upgamma_{2^{i-1}} (\mu_j))^2)$ with a product $\alpha_1 \omega_1 \in E^{2^{j+1}}$ sharing the same degrees, with $\alpha_1 \in E^{2^{j+1}}_{*,0,*}$ and $\omega_1 \in E^{2^{j+1}}_{0,*,*}$; in particular:
\begin{gather*}
    |\alpha_1|= (2^{i+j+1}-k_1,0, 2^i(2^j-1)-w_1)\\
    |\omega_1|= (0,k_1-1,w_1).
\end{gather*}
By \ref{eqn:A/tau_j-1,xi_j}, $w_1  \geq  2^{j+1}-1$. 

If $\alpha_1=1$, one can apply the considerations at the end of this part of the proof (choose $n=1$) to deduce a contradiction. If $\alpha_1 \neq 1$, it must have a non-trivial differential at some page. We claim that it cannot be a $d^{2^{j+1}}$ differential, so it has to be longer. Observe that since:
\[
    \deg(\alpha_1)=2^{i+j+1}-k_1\leq 2^{i+j+1}-2^{j+2}+1=(2^i-2)2^{j+1}+1
\]
which is:
\[
    \deg(\alpha_1)\leq (2^i-2)2^{j+1}+1\leq (2^i -1)2^{j+1}
\]
for $j  \geq  0$. So the element $\alpha_1\omega_1$ lives in the region controlled by remark \ref{rmk:cons.of.ind.step.muj.2}; in particular, if $\alpha_1$ had a non-trivial $d^{2^{j+1}}$-differential, $\alpha_1\omega_1$ would disappear after the $E^{2^{j+1}}$ page. So $\alpha_1$ must support a longer differential $d^{k_2}$; by equation \ref{eqn:A/tau_j-1,xi_j} it must be $k_2  \geq  2^{j+2}-1$.

We apply the above considerations to this step as well and identify  $d^{k_2}(\alpha_1)$ with a product $\alpha_2 \omega_2 \in E^{2^{j+1}}$ such that:
\begin{gather*}
    |\alpha_2|= (2^{i+j+1}-k_1-k_2,0, 2^i(2^j-1)-w_1-w_2)\\
    |\omega_2|= (0,k_2-1,w_2).
\end{gather*}
In particular, $w_2  \geq  2^{j+1}-1$. This process is repeated $n-1$ times until we reach a differential $d^{k_n}(\alpha_{n-1}) \in E^{k_n}_{0,*,*}$; we identify the image of this last differential with an element $\omega_n \in E^{2^{j+1}}_{0,k_n-1,w_n}$ sharing the same degrees. Since analogous considerations on degrees and weights apply at each step, we get the equations:
\begin{gather*}
    deg(\alpha_{n-1})=2^{i+j+1}-k_1-k_2- \ldots -k_{n-1}=k_n \\
    w(\alpha_{n-1})=2^i(2^j-1)-w_1-w_2-\ldots-w_{n-1}=w(\omega_n)=w_n.
\end{gather*}
So:
\begin{equation} \label{eqn:kl.wl.gamma.muj.2}
  \begin{cases}
    2^{i+j+1}=\sum_{l=1}^n k_l\\
    2^i(2^j-1)= \sum_{l=1}^n w_l.
\end{cases}  
\end{equation}

Now, because they survive after the $E^{2^{j+1}}$ page and live in the range covered by the induction hypothesis, the elements $\omega_j$ can be thought to live in $\pi_{*,*}(\mathcal{A}(2)/\tau)/\langle \tau_0, \ldots, \tau_{j-1}, \xi_1, \ldots \xi_j\rangle$; we can then apply \ref{lmm:d/w.for.E.2.j+1} to the pairs $(k_l-1,w_l)$ to get:
\begin{gather*}
    \sum_{l=1}^n (k_l-1)\leq \sum_{l=1}^n w_l\left ( 2+ \frac{1}{2^j-1} \right) 
    \intertext{which becomes: }
    -n+ \sum_{l=1}^n k_l\leq \left( \sum_{l=1}^n w_l\right )\left ( 2+ \frac{1}{2^j-1} \right)
\end{gather*}
Plug in equations \ref{eqn:kl.wl.gamma.muj.2}:
\begin{equation*}
    -n+ 2^{i+j+1} \leq 2^i(2^j-1) \left ( 2+ \frac{1}{2^j-1} \right)
\end{equation*}
which gives:
\begin{equation*}    
    n  \geq  2^{i+j+1} - 2^i(2^j-1) \left ( 2+ \frac{1}{2^j-1} \right) = 2^{i+j+1} - 2^{i+j+1}+2^{i+1}-2^i = 2^i
\end{equation*}

Back to \ref{eqn:kl.wl.gamma.muj.2}, using the estimate $w_l  \geq  2^{j+1}-1$ valid for all $l$:
\begin{gather*}
    2^i(2^j-1)= \sum_{l=1}^n w_l  \geq  n (2^{j+1}-1)  \geq  2^i (2^{j+1}-1)
\end{gather*}
which is a contradiction. Hence $(\upgamma_{2^{i-1}} (\mu_j))^2=0$ as desired.

We conclude that any homogeneous polynomial in degree smaller than or equal to $2^i2^{j+1}$ can be rewritten as a linear combination of the $\upgamma_l(\mu_j)$ with coefficients in $d^{2^j+1}$-cycles; the polynomial is moreover trivial if and only if all coefficients are trivial. This inductively proves that the divided powers  $\upgamma_l(\mu_j)$ are algebraically independent of the $d^{2^{j+1}}$-cycles, in other words, from everything else appearing in $E^{2^{j+1}}_{*,0,*}$.

Combining all together, we can conclude that there are a commutative triangle:
\[
 \begin{tikzcd}
        E^{2^{j+1}+1}_{*,0,*} \arrow[r, hook] \arrow[rd, "\cong"] & 
        E^{2^{j+1}}_{*,0,*} \arrow[d, two heads] \\
        & E^{2^{j+1}}_{*,0,*}/\langle \upgamma_i (\mu_j) \rangle_{i \geq 1}
    \end{tikzcd}    
\]
an isomorphism:
\[
    E^{2^{j+1}+1}_{0,*,*} \cong E^{2^{j+1}}_{0,*,*}/\langle \tau_j \rangle
\]
and a tensor product decomposition:
\[
    E^{2^{j+1}+1}_{\star,\bullet,*} \cong E^{2^{j+1}+1}_{\star,0,*}\otimes_{\mathbb{F}_2} E^{2^{j+1}+1}_{0,\bullet,*}
\]
Plugging in the isomorphisms from the previous induction passages, we get:
\begin{gather*}
    E^{2^{j+1}+1}_{*,0,*} \cong  \pi_{*,*}(MHH(M\mathbb{Z}/2)/\tau)/\langle \upgamma_i (\mu_0), \ldots \upgamma_i (\mu_j), \lambda_1, \ldots, \lambda_{j}\rangle_{i  \geq  0}   \\
    E^{2^{j+1}+1}_{0,*,*} \cong \pi_{*,*}(\mathcal{A}(2)/\tau)/\langle \tau_0, \ldots, \tau_j, \xi_1,\ldots \xi_j \rangle .
\end{gather*}

Finally, since the spectral sequence is first-quadrant,  the injection:
\begin{equation*}
    \left ( \bigotimes_{h=0}^{j} \Gamma_{\mathbb{F}_2}(\mu_h) \right ) \otimes_{\mathbb{F}_2} \left ( \bigotimes_{h=1}^{j}  \Lambda_{\mathbb{F}_2}(\lambda_h) \right )  \hookrightarrow  \pi_{*,*}(MHH(M\mathbb{Z}/2)/\tau)
\end{equation*}
is an isomorphism in degrees smaller or equal to $2^{j+1}$. 

This concludes this induction step.

\begin{proof}[Proof of Lemma \ref{lmm:d/w.for.E.2.j+1}]
This proof is very similar to that of lemma \ref{lmm:d/w.for.E4}, which can be seen as a special case of lemma \ref{lmm:d/w.for.E.2.j+1} by choosing $j=1$. 

Any homogeneous element in 
\[
    \pi_{*,*}(\mathcal{A}(2)/\tau)/\langle \tau_0, \ldots, \tau_{j-1}, \xi_1, \ldots \xi_j\rangle
\]
is a sum of products of the form:
\[
    x=\tau_j^{\epsilon_j} \tau_{j+1}^{\epsilon_{j+1}} \cdots \tau_k^{\epsilon_k}\xi_{j+1}^{l_{j+1}} \cdots \xi_k^{l_k}
\]
with $\epsilon_i \in \{0,1\}$ and $l_i \in \mathbb{N}$. Its bidegree is:
\begin{align*}
    &|x|=\\
    &\left (\sum_{i=j}^k \epsilon_i(2^{i+1}-1)+\sum_{i=j+1}^k l_j(2^{i+1}-2), \sum_{i=j}^k \epsilon_i(2^{i}-1)+\sum_{i=j+1}^k l_j(2^{i}-2) \right )\\
    &=\left (\sum_{i=j}^k \left[(\epsilon_i+l_i)(2^{i+1}-2)+ \epsilon_i\right], \sum_{i=j}^k (\epsilon_i+l_i)(2^{i}-1) \right )=(d,w)
\end{align*}
where we set $l_j=0$. The ratio degree/weight is given by:
\[
\frac{d}{w}= \frac{\sum_{i=j}^k \left[(\epsilon_i+l_i)(2^{i+1}-2)+ \epsilon_i\right]}{ \sum_{i=j}^k (\epsilon_i+l_i)(2^{i}-1)}
\]

Observe that:
\[
\left[(\epsilon_i+l_i)(2^{i+1}-2)+ \epsilon_i\right]=0 \iff \epsilon_i=l_i=0 \iff (\epsilon_i+l_i)(2^{i}-1) =0
\]
We can then estimate $\frac{d}{w}$ by giving uniform bounds on those summands:
\[
\frac{(\epsilon_i+l_i)(2^{i+1}-2)+ \epsilon_i}{ (\epsilon_i+l_i)(2^{i}-1)},
\]
for which at least one between $\epsilon_i$ and $l_i$ is nonzero. On one side we have:
\[
\frac{(\epsilon_i+l_i)(2^{i+1}-2)+ \epsilon_i}{ (\epsilon_i+l_i)(2^{i}-1)} \geq  \frac{(\epsilon_i+l_i)(2^{i+1}-2)}{ (\epsilon_i+l_i)(2^{i}-1)}=2.
\]
On the other side:
\[
\frac{(\epsilon_i+l_i)(2^{i+1}-2)+ \epsilon_i}{ (\epsilon_i+l_i)(2^{i}-1)}= 2+\frac{\epsilon_i}{ (\epsilon_i+l_i)(2^{i}-1)} \leq 2+\frac{1}{ 2^{i}-1}\leq  2+\frac{1}{ 2^{j}-1}
\]
So:
\[
     2 \leq \frac{d}{w} \leq 2+\frac{1}{ 2^{j}-1}
\]
as we wanted to show.
\end{proof}

This concludes the argument for the spectral sequence at $p=2$.

\clearpage
\relax


\input{diagrams/E2}
\begin{figure}
\centering
\adjustbox{max width= \textwidth, center}{
\pgfsetshortenend{0.6pt}
\pgfsetshortenstart{0.6pt}
\begin{tikzpicture}[scale=2\textwidth/190cm,line width=1pt]

\draw[step=1cm , almostwhite, very thin] (0,0) grid (189.5,189.5);
\draw[step=10cm ,gray,very thin] (0,0) grid (189.5,189.5);
\draw[gray, line width=0.35pt,->](0,0)--(190,0);
\draw[gray, line width=0.35pt,->] (0,0)--(0,190);

\node foreach \y in {1,...,18} at (0, 10*\y) [left, scale=0.45] {\y};
\node foreach \x in {1,...,18} at (10*\x, 0) [below, scale=0.45] {\x};

{\filldraw[weight0]
(0,0) circle (6pt) node[below left, scale=0.45]{ 1}
;}
{\filldraw[weight2]
(0,40) circle (6pt) node[rotate=90, right, scale=0.45]{$\xi_1$}
(1,50) circle (6pt) node[rotate=90, right, scale=0.45]{$\tau_1$}
;}
{\filldraw[weight4]
(0,80) circle (6pt) node[rotate=90, right, scale=0.45]{$\xi_1^2$}
(1,90) circle (6pt) node[rotate=90, right, scale=0.45]{$\tau_1\xi_1$}
;}
{\filldraw[weight6]
(0,120) circle (6pt) node[rotate=90, right, scale=0.45]{$\xi_1^3$}
(1,130) circle (6pt) node[rotate=90, right, scale=0.45]{$\tau_1\xi_1^2$}
;}
{\filldraw[weight8]
(0,160) circle (6pt) node[rotate=90, right, scale=0.45]{$\xi_1^4$}
(1,160) circle (6pt) node[rotate=90, right, scale=0.45]{$\xi_2$}
(2,170) circle (6pt) node[rotate=90, right, scale=0.45]{$\tau_1\xi_1^3$}
(3,170) circle (6pt) node[rotate=90, right, scale=0.45]{$\tau_2$}
;}

{\filldraw[weight2]
(50,0) circle (6pt) node[right, scale=0.45]{ $\lambda_1$}
;}
{\filldraw[weight4]
(50,40) circle (6pt)
(51,50) circle (6pt)
;}
{\filldraw[weight6]
(50,80) circle (6pt)
(51,90) circle (6pt)
;}
{\filldraw[weight8]
(50,120) circle (6pt)
(51,130) circle (6pt)
;}
{\filldraw[weight10]
(50,160) circle (6pt)
(51,160) circle (6pt)
(52,170) circle (6pt)
(53,170) circle (6pt)
;}

\draw[colarrow, line width=0.4pt,->](50,0)--(0,40);
\draw[colarrow, line width=0.4pt,->](50,40)--(0,80);
\draw[colarrow, line width=0.4pt,->](51,50)--(1,90);
\draw[colarrow, line width=0.4pt,->](50,80)--(0,120);
\draw[colarrow, line width=0.4pt,->](51,90)--(1,130);
\draw[colarrow, line width=0.4pt,->](50,120)--(0,160);
\draw[colarrow, line width=0.4pt,->](51,130)--(2,170);

{\filldraw[weight2]
(60,1) circle (6pt) node[right, scale=0.45]{ $\mu_1$}
;}
{\filldraw[weight4]
(60,41) circle (6pt)
(61,51) circle (6pt)
;}
{\filldraw[weight6]
(60,81) circle (6pt)
(61,91) circle (6pt)
;}
{\filldraw[weight8]
(60,121) circle (6pt)
(61,131) circle (6pt)
;}
{\filldraw[weight10]
(60,161) circle (6pt)
(61,161) circle (6pt)
(62,171) circle (6pt)
(63,171) circle (6pt)
;}

{\filldraw[weight2]
(60,1) circle (6pt) node[right, scale=0.45]{ $\mu_1$}
;}
{\filldraw[weight4]
(60,41) circle (6pt)
(61,51) circle (6pt)
;}
{\filldraw[weight6]
(60,81) circle (6pt)
(61,91) circle (6pt)
;}
{\filldraw[weight8]
(60,121) circle (6pt)
(61,131) circle (6pt)
;}
{\filldraw[weight10]
(60,161) circle (6pt)
(61,161) circle (6pt)
(62,171) circle (6pt)
(63,171) circle (6pt)
;}

{\filldraw[weight4]
(110,1) circle (6pt) node[right, scale=0.45]{ $\mu_1\lambda_1$}
;}
{\filldraw[weight6]
(110,41) circle (6pt)
(111,51) circle (6pt)
;}
{\filldraw[weight8]
(110,81) circle (6pt)
(111,91) circle (6pt)
;}
{\filldraw[weight10]
(110,121) circle (6pt)
(111,131) circle (6pt)
;}
{\filldraw[weight12]
(110,161) circle (6pt)
(111,161) circle (6pt)
(112,171) circle (6pt)
(113,171) circle (6pt)
;}

\draw[colarrow, line width=0.4pt,->](110,1)--(60,41);
\draw[colarrow, line width=0.4pt,->](110,41)--(60,81);
\draw[colarrow, line width=0.4pt,->](111,51)--(61,91);
\draw[colarrow, line width=0.4pt,->](110,81)--(60,121);
\draw[colarrow, line width=0.4pt,->](111,91)--(61,131);
\draw[colarrow, line width=0.4pt,->](110,121)--(60,161);
\draw[colarrow, line width=0.4pt,->](111,131)--(62,171);

{\filldraw[weight4]
(120,2) circle (6pt) node[right, scale=0.45]{ $\upgamma_2(\mu_1)$}
;}
{\filldraw[weight6]
(120,42) circle (6pt)
(121,52) circle (6pt)
;}
{\filldraw[weight8]
(120,82) circle (6pt)
(121,92) circle (6pt)
;}
{\filldraw[weight10]
(120,122) circle (6pt)
(121,132) circle (6pt)
;}
{\filldraw[weight12]
(120,162) circle (6pt)
(121,162) circle (6pt)
(122,172) circle (6pt)
(123,172) circle (6pt)
;}

{\filldraw[weight6]
(170,2) circle (6pt) node[right, scale=0.45]{ $\upgamma_2(\mu_1)\lambda_1$}
;}
{\filldraw[weight8]
(170,42) circle (6pt)
(171,52) circle (6pt)
;}
{\filldraw[weight10]
(170,82) circle (6pt)
(171,92) circle (6pt)
;}
{\filldraw[weight12]
(170,122) circle (6pt)
(171,132) circle (6pt)
;}
{\filldraw[weight14]
(170,162) circle (6pt)
(171,162) circle (6pt)
(172,172) circle (6pt)
(173,172) circle (6pt)
;}

\draw[colarrow, line width=0.4pt,->](170,2)--(120,42);
\draw[colarrow, line width=0.4pt,->](170,42)--(120,82);
\draw[colarrow, line width=0.4pt,->](171,52)--(121,92);
\draw[colarrow, line width=0.4pt,->](170,82)--(120,122);
\draw[colarrow, line width=0.4pt,->](171,92)--(121,132);
\draw[colarrow, line width=0.4pt,->](170,122)--(120,162);
\draw[colarrow, line width=0.4pt,->](171,132)--(122,172);

{\filldraw[weight8]
(170,3) circle (6pt) node[right, scale=0.45]{ $\lambda_2$}
;}
{\filldraw[weight10]
(170,43) circle (6pt)
(171,53) circle (6pt)
;}
{\filldraw[weight12]
(170,83) circle (6pt)
(171,93) circle (6pt)
;}
{\filldraw[weight14]
(170,123) circle (6pt)
(171,133) circle (6pt)
;}
{\filldraw[weight16]
(170,163) circle (6pt)
(171,163) circle (6pt)
(172,173) circle (6pt)
(173,173) circle (6pt)
;}

{\filldraw[weight6]
(180,3) circle (6pt) node[right, scale=0.45]{ $\upgamma_3(\mu_1)$}
;}
{\filldraw[weight8]
(180,43) circle (6pt)
(181,53) circle (6pt)
;}
{\filldraw[weight10]
(180,83) circle (6pt)
(181,93) circle (6pt)
;}
{\filldraw[weight12]
(180,123) circle (6pt)
(181,133) circle (6pt)
;}
{\filldraw[weight14]
(180,163) circle (6pt)
(181,163) circle (6pt)
(182,173) circle (6pt)
(183,173) circle (6pt)
;}

{\filldraw[weight8]
(180,4) circle (6pt) node[right, scale=0.45]{ $\mu_2$}
;}
{\filldraw[weight10]
(180,44) circle (6pt)
(181,54) circle (6pt)
;}
{\filldraw[weight12]
(180,84) circle (6pt)
(181,94) circle (6pt)
;}
{\filldraw[weight14]
(180,124) circle (6pt)
(181,134) circle (6pt)
;}
{\filldraw[weight16]
(180,164) circle (6pt)
(181,164) circle (6pt)
(182,174) circle (6pt)
(183,174) circle (6pt)
;}

\end{tikzpicture}}
\caption{The $E^5$ page of the spectral sequence for $p=3$. Some arrows, exceeding the boundaries of the figure, are not displayed.\\
{\color{weight0} weight 0},
{\color{weight2} weight 2},
{\color{weight4} weight 4},
{\color{weight6} weight 6},
{\color{weight8} weight 8},
{\color{weight10} weight 10},
{\color{weight12} weight 12},
{\color{weight14} weight 14},
{\color{weight16} weight 16},
}
\label{fig:E5}
\end{figure}
\begin{figure}
\centering
\adjustbox{max width= \textwidth, center}{
\pgfsetshortenend{0.6pt}
\pgfsetshortenstart{0.6pt}
\begin{tikzpicture}[scale=2\textwidth/190cm,line width=1pt]

\draw[step=1cm , almostwhite, very thin] (0,0) grid (189.5,189.5);
\draw[step=10cm ,gray,very thin] (0,0) grid (189.5,189.5);
\draw[gray, line width=0.35pt,->](0,0)--(190,0);
\draw[gray, line width=0.35pt,->] (0,0)--(0,190);

\node foreach \y in {1,...,18} at (0, 10*\y) [left, scale=0.45] {\y};
\node foreach \x in {1,...,18} at (10*\x, 0) [below, scale=0.45] {\x};

{\filldraw[weight0]
(0,0) circle (6pt) node[below left, scale=0.45]{ 1}
;}
{\filldraw[weight2]
(1,50) circle (6pt) node[rotate=90, right, scale=0.45]{$\tau_1$}
;}
{\filldraw[weight8]
(1,160) circle (6pt) node[rotate=90, right, scale=0.45]{$\xi_2$}
(3,170) circle (6pt) node[rotate=90, right, scale=0.45]{$\tau_2$}
;}

{\filldraw[weight2]
(60,1) circle (6pt) node[right, scale=0.45]{ $\mu_1$}
;}
{\filldraw[weight4]
(61,51) circle (6pt)
;}
{\filldraw[weight10]
(61,161) circle (6pt)
(63,171) circle (6pt)
;}

\draw[colarrow, line width=0.4pt,->](60,1)--(1,50);

{\filldraw[weight4]
(120,2) circle (6pt) node[right, scale=0.45]{ $\upgamma_2(\mu_1)$}
;}
{\filldraw[weight6]
(121,52) circle (6pt)
;}
{\filldraw[weight12]
(121,162) circle (6pt)
(123,172) circle (6pt)
;}

\draw[colarrow, line width=0.4pt,->](120,2)--(61,51);

{\filldraw[weight8]
(170,3) circle (6pt) node[right, scale=0.45]{ $\lambda_2$}
;}
{\filldraw[weight10]
(171,53) circle (6pt)
;}
{\filldraw[weight16]
(171,163) circle (6pt)
(173,173) circle (6pt)
;}

{\filldraw[weight6]
(180,3) circle (6pt) node[right, scale=0.45]{ $\upgamma_3(\mu_1)$}
;}
{\filldraw[weight8]
(181,53) circle (6pt)
;}
{\filldraw[weight14]
(181,163) circle (6pt)
(183,173) circle (6pt)
;}

\draw[colarrow, line width=0.4pt,->](180,3)--(121,52);

{\filldraw[weight8]
(180,4) circle (6pt) node[right, scale=0.45]{ $\mu_2$}
;}
{\filldraw[weight10]
(181,54) circle (6pt)
;}
{\filldraw[weight16]
(181,164) circle (6pt)
(183,174) circle (6pt)
;}

\end{tikzpicture}}
\caption{The $E^6$ page of the spectral sequence for $p=3$. Some arrows, exceeding the boundaries of the figure, are not displayed.\\
{\color{weight0} weight 0},
{\color{weight2} weight 2},
{\color{weight4} weight 4},
{\color{weight6} weight 6},
{\color{weight8} weight 8},
{\color{weight10} weight 10},
{\color{weight12} weight 12},
{\color{weight14} weight 14},
{\color{weight16} weight 16},
}
\label{fig:E6}
\end{figure}
\relax



\clearpage

\section{\texorpdfstring{$MHH(M\mathbb{Z}/p)/\tau^{p-1}$ for odd $p$}{MHH(MZ/p)/tau\^{}(p-1) for odd p}}
\subsection{General considerations}

The induction argument for odd primes proceeds much similarly to the already studied $p=2$. The main difference here is that each generator- each point in the diagrams- does not represent a homogeneous module isomorphic to $\mathbb{F}_2$, the field with two elements, but a module isomorphic to $\mathbb{F}_p[\tau]/\tau^{p-1}$, as appears from the spectral sequence description. Observe in particular that this ring is not homogeneous in weight.

Nonetheless, we can adopt a similar strategy in this situation as well and prove:
\begin{thm}\label{thm:MHH(MZ/p)/tau^p-1}
    There is an isomorphism of graded rings:
    \[
    \pi_{*,*}(MHH(M\mathbb{Z}/p)/\tau^{p-1}) \cong
     \bigotimes_{h\in \mathbb{N}} \left ( \Gamma_{\mathbb{F}_p[\tau]/\tau^{p-1}}(\mu_h)  \otimes_{\mathbb{F}_p[\tau]/\tau^{p-1}}  \Lambda_{\mathbb{F}_p[\tau]/\tau^{p-1}}(\lambda_{h+1}) \right )
    \]
    where 
    \begin{gather*}
        \Gamma_{\mathbb{F}_p[\tau]/\tau^{p-1}}(\mu_h) \cong \mathbb{F}_p[\tau, \upgamma_{p^i}(\mu_h)]/\langle \tau^{p-1}, \upgamma_{p^i}(\mu_h)^p \rangle\\
        \Lambda_{\mathbb{F}_p[\tau]/\tau^{p-1}}(\lambda_h) \cong \mathbb{F}_p[\tau, \lambda_h]/\langle \tau^{p-1}, \lambda_h^2 \rangle
    \end{gather*}
    and the tensor product is taken in $\mathbb{F}_p[\tau]/\tau^{p-1}$-algebras.
    The degrees are:
    \begin{align*}
        |\tau|=(0,-1) && |\upgamma_{p^i}(\mu_h)|=(2p^{h+i}, p^i(p^h -1)) && |\lambda_h|=(2p^h-1, p^h-1).
    \end{align*}    
\end{thm}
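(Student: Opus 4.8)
The plan is to transpose, essentially line by line, the inductive analysis of the spectral sequence of Proposition \ref{prop:spectral.seq.mod.tau} carried out above for $p=2$, with the base field $\mathbb{F}_2$ replaced throughout by the Artinian ring $R:=\pi_{0,*}((M\mathbb{Z}/p)/\tau^{p-1})\cong\mathbb{F}_p[\tau]/\tau^{p-1}$. The structural fact legitimising this is that the spectral sequence is one of free $R$-modules, with $\tau$ a permanent cycle in $E^\infty_{0,0,*}$, so multiplication by $\tau$ commutes with every differential; hence the whole bookkeeping of which homogeneous piece $E^k_{s,t,*}$ survives, and through which differential it dies, proceeds $R$-linearly, the $\tau$-multiples of a generator being carried along as inert decoration. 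Under this dictionary the $p$-shaped analogue of the $p=2$ picture predicts: non-trivial differentials occur only on the pages $E^{2p^j-1}$ and $E^{2p^j}$, $j\geq 0$; on $E^{2p^j-1}$ one is forced to introduce an exterior generator $\lambda_j$ with $d^{2p^j-1}(\lambda_j)=\xi_j$; on $E^{2p^j}$ one is forced to introduce the divided-power generators $\upgamma_{p^i}(\mu_j)$, $i\geq 0$, of $\mu_j:=\upgamma_1(\mu_j)$, with $d^{2p^j}(\upgamma_l(\mu_j))$ equal to $\upgamma_{l-1}(\mu_j)\tau_j$ up to a unit of $\mathbb{F}_p$; and on every other page $E^{k+1}\cong E^k$. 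Throughout one maintains the tensor decomposition $E^k_{\star,\bullet,*}\cong E^k_{\star,0,*}\otimes_R E^k_{0,\bullet,*}$, with $E^k_{0,*,*}$ the quotient of $\pi_{*,*}(\mathcal{A}(p)/\tau^{p-1})$ by the $\tau_i,\xi_i$ already killed and $E^k_{*,0,*}$ the parallel quotient of $\pi_{*,*}(MHH(M\mathbb{Z}/p)/\tau^{p-1})$.

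Concretely the argument breaks into the same pieces as for $p=2$. In the base case, page $E^2=E^{2p^0}$, the class $\mu_0$ and all its divided powers $\upgamma_{p^i}(\mu_0)$ — which all have weight $0$, since $|\upgamma_{p^i}(\mu_0)|=(2p^i,0)$ — are produced exactly as for $\mu_0$ at $p=2$, using only that $\tau_0$ is the unique weight-$0$ class of $\pi_{*,*}(\mathcal{A}(p)/\tau^{p-1})$ off the zeroth row together with $\tau_0^2=0$; the one new point is that $\upgamma_{p^i}(\mu_0)^p=0$ (rather than squares), which is forced because $\mu_0^p=p!\,\upgamma_p(\mu_0)=0$ in characteristic $p$ while $\upgamma_p(\mu_0)$ is itself the next, algebraically independent, generator. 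The pages $E^{2p^j-1}$, introducing $\lambda_j$, are the verbatim analogue of the $\lambda_1$ step, and in fact slightly easier, since $\lambda_j$ has odd total degree $2p^j-1$, so $\lambda_j^2=0$ holds automatically at the odd prime. The pages $E^{2p^j}$ with $j\geq 1$, introducing the $\upgamma_{p^i}(\mu_j)$, reproduce the $E^4$ and general $E^{2^{j+1}}$ step: the nested induction runs over the base-$p$ expansion $l=\sum_i a_i p^i$, with $\upgamma_l(\mu_j)=\prod_i\upgamma_{p^i}(\mu_j)^{a_i}$; when $l$ is not a power of $p$ the differential is read off the Leibniz rule as in Lemma \ref{lmm:leibniz.on.div.pow} (cross terms vanish because one factor gets raised to its $p$-th power), and when $l=p^i$ one proves the new generator into existence by excluding every longer differential out of $\upgamma_{p^i-1}(\mu_j)\tau_j$ and by establishing $\upgamma_{p^{i-1}}(\mu_j)^p=0$, both via a degree/weight contradiction. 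Finally, on the interpolating pages $E^{k+1}\cong E^k$ for the familiar reason that between two consecutive events the vertical algebra is empty in first degrees $1$ through $2p^{j+1}-3$, leaving no room for a non-trivial $d^k$; and one assembles the successive injections $\bigotimes_h\bigl(\Gamma_R(\mu_h)\otimes_R\Lambda_R(\lambda_{h+1})\bigr)\hookrightarrow\pi_{*,*}(MHH(M\mathbb{Z}/p)/\tau^{p-1})$, concluding degree by degree that they are isomorphisms by first-quadrant convergence.

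The only genuinely new technical input is the degree/weight estimate replacing Lemmas \ref{lmm:d/w.for.E4} and \ref{lmm:d/w.for.E.2.j+1}: for a homogeneous $x$ in the vertical algebra $E^{2p^j+1}_{0,*,*}$ (that is, in $\pi_{*,*}(\mathcal{A}(p)/\tau^{p-1})$ modulo the classes killed up to and including $\tau_j$) with $|x|=(d,w)$ one needs $2\leq d/w\leq 2+\varepsilon_j$ with $\varepsilon_j=O(p^{-j})$, the content being that each $\xi_i$ contributes ratio exactly $2$ while each $\tau_i$ contributes $2+\frac{1}{p^i-1}$. The feature absent at $p=2$ is that elements now carry a $\tau$-power $\tau^c$, $0\leq c\leq p-2$, which leaves the degree unchanged but lowers the weight, inflating the ratio; this is harmless because once $\tau_j$ has been killed every $\tau$-free monomial other than $1$ has weight at least $p^{j+1}-1$, so the tail subtracts at most $p-2$ and the ratio stays bounded by $2+\frac{2p-3}{p^{j+1}-p+1}$, which still tends to $2$. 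One then runs the telescoping-of-inequalities argument exactly as at $p=2$: the weights of the chain $\upgamma_{p^i-1}(\mu_j)\tau_j\to\alpha_1\omega_1\to\cdots\to\omega_n$ sum to $w(\upgamma_{p^i-1}(\mu_j)\tau_j)=(p^j-1)p^i$, each length $k_l$ is at least $2p^{j+1}-1$, and combining these with the degree equation forces a contradiction for $i\geq 1$. I expect the main obstacle to be precisely this $\tau$-bookkeeping — keeping it consistent across the three contradiction arguments (that $\upgamma_{p^i-1}(\mu_j)\tau_j$ supports no differential, that $\upgamma_{p^{i-1}}(\mu_j)^p=0$, and that no polynomial in the earlier divided powers hits $\upgamma_{p^i-1}(\mu_j)\tau_j$), and making sure the truncation length $p-1$ never destroys positivity of the weights entering the estimates; everything else is a faithful transcription of the $p=2$ argument with $2$ replaced by $p$ and $\mathbb{F}_2$ by $\mathbb{F}_p[\tau]/\tau^{p-1}$.
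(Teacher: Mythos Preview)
Your proposal is correct and follows essentially the same approach as the paper: the paper runs exactly this induction on the page index, with non-trivial differentials only on pages $E^{2p^j-1}$ (introducing $\lambda_j$, $j\geq 1$) and $E^{2p^j}$ (introducing the $\upgamma_{p^i}(\mu_j)$), maintaining the tensor decomposition over $\mathbb{F}_p[\tau]/\tau^{p-1}$ throughout, and the key technical input is precisely the degree/weight estimate you describe, which the paper records as Lemma~\ref{lemma:estimate.on.monomials} (phrased as $Q(i_1)\leq w/d\leq 1/2$ with the $\tau$-correction absorbed into $Q$) and its Corollary~\ref{cor:rough.est.Q(i)}. Your observation that $\lambda_j^2=0$ is automatic from graded commutativity at odd $p$ is a minor simplification over the paper, which re-derives it from the spectral sequence.
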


The argument proceeds by induction on the number of the page; as for the case $p=2$, the key step is to prove that each page of the spectral sequence decomposes as a $\mathbb{F}_p[\tau]/\tau^{p-1}$-tensor product of two graded rings: one concentrated on the horizontal axis and one on the vertical axis. The proof articulates as follows. On most pages, we will see that nothing happens. On the other hand, at some specific pages (namely, those of index $2p^i$ or $2p^{i+1}-1$ for some $i \geq 0$) we will identify an algebra generator (or a family of generators) of $\pi_{*,*}(MHH(\mathbb{F}_p[\tau])/\tau^{p-1})$, that survive to that page and generate all non-trivial differentials. At the same time, we can associate to the image of such differentials a generator of $\pi_{*,*}(\mathcal{A}(p)/\tau^{p-1})$, so that the next page still decomposes as a $\mathbb{F}_p[\tau]/\tau^{p-1}$-tensor product of a quotient ring of $\pi_{*,*}(MHH(\mathbb{F}_p[\tau])/\tau^{p-1})$ and a quotient ring of $\pi_{*,*}(\mathcal{A}(p)/\tau^{p-1})$. The process is explicit and allows us to identify inductively all the elements $\pi_{*,*}(MHH(\mathbb{F}_p[\tau])/\tau^{p-1})$. We begin by looking at low-index pages to get an idea of what the spectral sequence looks like and to provide a base step for the induction argument, formulated in full precision in a \hyperref[subs:induction.step.p]{later subsection}.

The first result needed is the following lemma dealing with degrees and weights of monomials in the motivic mod $p$ dual Steenrod algebra. We recall that:
\[
    \pi_{*,*}(\mathcal{A}(p)/\tau^{p-1}) \cong \mathbb{F}_p[\tau, \tau_i, \xi_{i+1}]_{i \geq 0}/\langle \tau^{p-1}, \tau_i^2 \rangle
\]
for odd primes $p$; the degrees are:
\begin{align*}
    |\tau|=(0,-1) && |\tau_i|=(2p^i-1,p^i-1) && |\xi_i|=(2p^i-2,p^i-1).
\end{align*}

\begin{lemma} \label{lemma:estimate.on.monomials}
    Consider a monomial:
    \[
        x=\tau^a \tau_{i_1}^{b_1}  \tau_{i_2}^{b_2}  \cdots\tau_{i_n}^{b_n} \xi_{i_1}^{c_1} \xi_{i_2}^{c_2} \cdots \xi_{i_n}^{c_n} \in \pi_{*,*}(\mathcal{A}(p)/\tau^{p-1}),
    \]
    with $i_1<i_2<\ldots<i_n$, $a,\, b_j,\, c_j \geq 0$ and either $b_1\neq 0$ or $c_1 \neq 0$. Let $|x|=(d,w)$; if:
    \[
        Q(i)=\frac{1}{2}-\frac{p-2}{2p^{i}-2}-\frac{1}{4p^{i}-2}
    \]
    then:
    \[
        Q(i_1) \leq \frac{w}{d} \leq \frac{1}{2}
    \]
\end{lemma}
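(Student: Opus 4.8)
The plan is to compress the bigrading of $x$ into two nonnegative integers and then run a short one‑variable optimisation. Write $x=\tau^a y$ with $y=\tau_{i_1}^{b_1}\cdots\tau_{i_n}^{b_n}\xi_{i_1}^{c_1}\cdots\xi_{i_n}^{c_n}$ (so $y$ has no central $\tau$), and set
\[
    W:=\mathrm{wt}(y)=\sum_{j=1}^{n}(b_j+c_j)(p^{i_j}-1),\qquad B:=\sum_{j=1}^{n}b_j.
\]
Since $|\tau|=(0,-1)$, each $\xi_i$ has degree $2(p^i-1)$ and each $\tau_i$ has degree $2(p^i-1)+1$, a one‑line computation gives $|x|=(d,w)$ with $d=2W+B$ and $w=W-a$, hence $\tfrac{w}{d}=\tfrac{W-a}{2W+B}$ (and $d>0$ since $b_1$ or $c_1$ is nonzero). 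Everything then reduces to bounding this ratio using the defining relations of $\pi_{*,*}(\mathcal{A}(p)/\tau^{p-1})$.

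Next I would record the elementary constraints. First, $Q(i_1)$ is only defined for $i_1\geq 1$ (otherwise $2p^{i_1}-2=0$), so I assume $i_1\geq 1$; this is harmless because in the applications $\tau_0$ has already been quotiented away. Then: $0\leq a\leq p-2$ because $\tau^{p-1}=0$; $B\geq 0$ trivially; $W\geq (p^{i_1}-1)B$ because every $\tau_{i_j}$ factor carries weight $p^{i_j}-1\geq p^{i_1}-1$; and $W\geq p^{i_1}-1$ because already the $j=1$ summand of $W$ is $(b_1+c_1)(p^{i_1}-1)\geq p^{i_1}-1$. The upper bound is then immediate: $\tfrac{W-a}{2W+B}\leq\tfrac{W}{2W}=\tfrac12$.

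For the lower bound I would argue as follows. From $W\geq p^{i_1}-1\geq p-1>p-2\geq a$ the numerator satisfies $W-a\geq W-(p-2)>0$, while the denominator satisfies $2W+B\leq 2W+\tfrac{W}{p^{i_1}-1}=W\cdot\tfrac{2p^{i_1}-1}{p^{i_1}-1}$; therefore
\[
    \frac{w}{d}\;\geq\;\frac{p^{i_1}-1}{2p^{i_1}-1}\left(1-\frac{p-2}{W}\right).
\]
The right‑hand side is increasing in $W$, so on $W\geq p^{i_1}-1$ it is minimised at $W=p^{i_1}-1$, which yields $\tfrac{w}{d}\geq\tfrac{p^{i_1}-p+1}{2p^{i_1}-1}$. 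Finally one clears denominators to verify the clean identity
\[
    \frac{p^{i_1}-p+1}{2p^{i_1}-1}-Q(i_1)=\frac{p-2}{2(p^{i_1}-1)(2p^{i_1}-1)}\;\geq\;0,
\]
which gives $Q(i_1)\leq\tfrac{w}{d}$ and closes the proof.

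This is essentially a bookkeeping argument, so there is no genuine obstacle; the two spots that deserve care are the identity $d=2W+B$ (in particular noting that the central $\tau$ contributes nothing to $d$, and that $\tau_i^2=0$ forces each $b_j\leq 1$, though the argument does not even need the latter) and the closing rational inequality. I would also point out that $Q(i_1)$ is deliberately not optimal: the actual infimum of $w/d$ is $\tfrac{p^{i_1}-p+1}{2p^{i_1}-1}$, attained at $x=\tau^{p-2}\tau_{i_1}$, and $Q$ is used only because of how it degrades under the successive quotients of $\pi_{*,*}(\mathcal{A}(p)/\tau^{p-1})$ in the induction, exactly as the analogous estimates $2\leq d/w\leq 2+\tfrac{1}{2^{j}-1}$ were used for $p=2$ in Lemmas \ref{lmm:d/w.for.E4} and \ref{lmm:d/w.for.E.2.j+1}.
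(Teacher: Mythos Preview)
Your proof is correct and follows essentially the same route as the paper: both write the bidegree as $d=2W+B$, $w=W-a$ with $W=\sum_j(b_j+c_j)(p^{i_j}-1)$ and $B=\sum_j b_j$, and then bound $\tfrac{w}{d}$ using the elementary constraints $0\leq a\leq p-2$, $B\leq W/(p^{i_1}-1)$, $W\geq p^{i_1}-1$. The only organisational difference is that the paper splits $\tfrac{w}{d}=\tfrac{-a}{d}+\tfrac{W}{d}$ and bounds the two pieces separately (the first via $d\geq 2p^{i_1}-2$, the second via a mediant/term-by-term argument), which directly produces the three summands in the definition of $Q(i_1)$; you instead keep the ratio intact, optimise over $W$, and obtain the sharp infimum $\tfrac{p^{i_1}-p+1}{2p^{i_1}-1}$ before checking it dominates $Q(i_1)$. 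Your version is slightly tighter and explains why $Q(i_1)$ is not optimal, at the cost of the extra verification step; the paper's version makes the origin of each term in $Q(i_1)$ transparent. Your remark that the statement implicitly assumes $i_1\geq 1$ (so that $Q(i_1)$ is defined) is also correct and worth stating.
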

Observe that any non-zero monomial in $\pi_{*,*}(\mathcal{A}(p)/\tau^{p-1})$ can be expressed in this form. This will provide an analogue to lemma \ref{lmm:d/w.for.E.2.j+1} at odd primes.
\begin{proof}
    The double degree of $x$ is:
    \begin{align*}
        |x| &= \left (0 + \sum_{j=1}^n \left[b_j(2p^{i_j}-1)+c_j(2p^{i_j}-2)\right] , -a + \sum_{j=1}^n (b_j+c_j)(p^{i_j}-1)\right ) \\
        &= \left (\sum_{j=1}^n \left[2(b_j+c_j)(p^{i_j}-1)+b_j\right] , -a + \sum_{j=1}^n (b_j+c_j)(p^{i_j}-1)\right )
    \end{align*}
    As $b_1\neq 0$ or $c_1 \neq 0$, the degree is non-zero, so it makes sense to evaluate:
    \[
        \frac{w}{d}= \frac{-a}{d}+\frac{\sum_{j=1}^n (b_j+c_j)(p^{i_j}-1)}{d}
    \]
    Now, $-p+2 \leq a \leq 0$ as $\tau^{p-1}=0$ and the minimum degree is $d \geq 2p^{i_1}-2$. So:
    \begin{equation}\label{eq:-a/d}
        \frac{-p+2}{2p^{i_1}-2} \leq \frac{-a}{d} \leq 0
    \end{equation}
        
    To evaluate the fraction:
    \[
        \frac{\sum_{j=1}^n (b_j+c_j)(p^{i_j}-1)}{\sum_{j=1}^n \left[2(b_j+c_j)(p^{i_j}-1)+b_j\right]}
    \]
    we make a homogeneous estimate on each fraction:
    \[
        \left(\frac{(b_j+c_j)(p^{i_j}-1)}{2(b_j+c_j)(p^{i_j}-1)+b_j}\right)^{-1}=\frac{2(b_j+c_j)(p^{i_j}-1)+b_j}{(b_j+c_j)(p^{i_j}-1)},
    \]
    obviously restricting to those $j$ for which either $b_j \neq 0$ or $c_j \neq 0$, so that the denominator is non-zero. Observe that this is equivalent to asking that the numerator is non-zero. As $b_j \geq 0$, there is an immediate lower bound:
    \[
        \frac{2(b_j+c_j)(p^{i_j}-1)+b_j}{(b_j+c_j)(p^{i_j}-1)} \geq 2
    \]
    For the lower bound, notice that since $\tau_j^2=0$, each $b_j$ is either 0 or 1. For $b_j=0$, 
    \[
        \frac{2+c_j(p^{i_j}-1)}{c_j(p^{i_j}-1)} = 2
    \]
    equals the lower bound (all the elements $\xi_j$ have $d/w=2$). For $b_j=1$:
    \[
        \frac{2(1+c_j)(p^{i_j}-1)+1}{(1+c_j)(p^{i_j}-1)} = 2+ \frac{1}{(1+c_j)(p^{i_j}-1)}
    \]
    As $c_j \geq 0$:
    \[
        \frac{1}{(1+c_j)(p^{i_j}-1)} \leq \frac{1}{p^{i_j}-1} \leq \frac{1}{p^{i_1}-1}
    \]
    We obtain the homogeneous upper bound:
    \[
        \frac{2(1+c_j)(p^{i_j}-1)+1}{(1+c_j)(p^{i_j}-1)} \leq 2+ \frac{1}{p^{i_1}-1}=\frac{2p^{i_1}-1}{p^{i_1}-1}
    \]
    Hence:
    \[
        \frac{1}{2}-\frac{1}{4p^{i_1}-2}=\frac{p^{i_1}-1}{2p^{i_1}-1} \leq \frac{\sum_{j=1}^n (b_j+c_j)(p^{i_j}-1)}{\sum_{j=1}^n \left[2(b_j+c_j)(p^{i_j}-1)+b_j\right]} \leq \frac{1}{2}
    \]
    Summing this and \ref{eq:-a/d} produces the thesis.    
\end{proof} 

We find a rougher estimate that will be more useful for computations.
\begin{cor} \label{cor:rough.est.Q(i)}
    $Q(i)\geq \frac{2p^i-2p+1}{4p^i-4}$
\end{cor}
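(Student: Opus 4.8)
The statement is an elementary inequality, so the plan is simply to rewrite $Q(i)$ in a convenient closed form and then compare it with the proposed bound term by term. First I would combine the first two summands of $Q(i)$ over the common denominator $4(p^i-1)$:
\[
    \frac{1}{2}-\frac{p-2}{2p^i-2} = \frac{(p^i-1)-(p-2)}{2(p^i-1)} = \frac{2p^i-2p+2}{4(p^i-1)},
\]
so that
\[
    Q(i) = \frac{2p^i-2p+2}{4(p^i-1)} - \frac{1}{4p^i-2}.
\]
Since the proposed lower bound is $\dfrac{2p^i-2p+1}{4p^i-4} = \dfrac{2p^i-2p+1}{4(p^i-1)}$, which differs from the first term above by exactly $\dfrac{1}{4(p^i-1)}$, the claim reduces to the single inequality
\[
    \frac{1}{4(p^i-1)} \;\geq\; \frac{1}{4p^i-2}.
\]

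The second step is to verify this. The corollary is only meaningful where $Q(i)$ is defined, i.e.\ for $i \geq 1$, and in that range $p^i \geq p \geq 3$, so both denominators $4(p^i-1)=4p^i-4$ and $4p^i-2$ are strictly positive with $4p^i-4 < 4p^i-2$; inverting gives the wanted inequality. Combining the two pieces then yields
\[
    Q(i) = \frac{2p^i-2p+1}{4(p^i-1)} + \left(\frac{1}{4(p^i-1)} - \frac{1}{4p^i-2}\right) \geq \frac{2p^i-2p+1}{4(p^i-1)},
\]
as desired.

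There is essentially no obstacle in this argument; the only point requiring a word of care is the standing hypothesis $i \geq 1$, which is precisely what makes both $Q(i)$ well-defined (the denominator $2p^i-2$ does not vanish) and the final comparison of denominators valid. For $i=0$ the expression $Q(0)$ is undefined and the statement is vacuous, so no separate treatment is needed.
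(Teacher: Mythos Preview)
Your proof is correct and takes essentially the same approach as the paper. The paper applies the inequality $\frac{1}{4p^i-2}\leq\frac{1}{4p^i-4}$ first and then combines everything over the common denominator $4p^i-4$, whereas you combine the first two terms first and then reduce to that same inequality; this is only a reordering of the same computation.
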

\begin{proof}
    \[
    \begin{aligned}
        Q(i)&=\frac{1}{2}-\frac{p-2}{2p^{i}-2}-\frac{1}{4p^{i}-2} \geq \frac{1}{2}-\frac{p-2}{2p^{i}-2}-\frac{1}{4p^{i}-4}\\
        & = \frac{1}{2}-\frac{2p-4}{4p^{i}-4}-\frac{1}{4p^{i}-4}=\frac{1}{2}-\frac{2p-3}{4p^{i}-4}\\
        & = \frac{2p^{i}-2-2p+3}{4p^{i}-4}=\frac{2p^i-2p+1}{4p^i-4}.
    \end{aligned}
    \]
\end{proof}


\begin{rmk}
    As we are working with differential graded algebras in odd characteristics, for any two elements $a$ and $b$, $d(ab)$ might be different from $d(ba)$, as $ab$ in general is, in general, different from $ba$. However, to relieve the reader from a large number of minus signs, we will safely ignore this whenever possible; all equalities have then to be considered true up to a sign. 
    
    As we will see, this sign coming from graded commutativity plays a very small role in our spectral sequence: it actively influences the behaviour of a differential only when dealing with the square of the elements $\lambda_i$.
\end{rmk}
\subsection{\texorpdfstring{The $E^2$ page}{The E2 page}}\label{sec:E2}

The starting point is the $E^2$ page, see figure \ref{fig:E2}. First of all, as the spectral sequence is first-quadrant, the convergence constraint imposes $\pi_{1,*}(MHH(M\mathbb{Z}/p)/\tau^{p-1}) \cong E^2_{1,0,*} \cong 0$.

The presence of $\tau_0 \in E^2_{0,1,0}$ and the convergence constraint imply the presence of an element $\mu_0 \in \pi_{2,0}(MHH(M\mathbb{Z}/p)/\tau^{p-1})$ with $d^2(\mu_0)= \tau_0$. Notice that, as $d^2(\tau)=0$, $d^2(\tau^i \mu_0)=\tau^i \tau_0$, which in particular implies that $\tau^i \mu_0 \neq 0$ for $0 \leq i \leq p-2$.

Consider now the element $\tau_0 \mu_0 \in E^2_{2,1,0}$. It cannot have non-trivial differentials; two reasons apply in this case. First and foremost, one could argue this just by looking at the distribution of nontrivial modules in the $E^2$ page. The second argument, which will prove useful also for the next steps, consists in noticing that all the generators in $\pi_{*,*}(\mathcal{A}(p)/\tau^{p-1})$ with degree larger than 1 have weight at least $p-1$; so elements in $\pi_{*,*}(\mathcal{A}(p)/\tau^{p-1})$ with degree larger than 1 have weight at least $1$. Hence an element with weight 0, as $\tau_0 \mu_0$, cannot differentiate to them. This implies that there exists an element $\upgamma_2 (\mu_0) \in \pi_{4,0}(MHH(M\mathbb{Z}/p)/\tau^{p-1}) \cong E^2_{4,0,0}$ with $d^2(\upgamma_2 (\mu_0)) =\tau_0 \mu_0$. Observe that also the square $\mu_0^2$ sits in $E^2_{4,0,0}$; the Leibniz rule tells that $d^2(\mu_0^2)=2\tau_0 \mu_0$, so we can identify $\mu_0^2=2\upgamma_2 (\mu_0)$.

If we apply the Leibniz rule to the elements of the form $\mu_0 \beta$, for $\beta \in E^2_{0,*,*}$, we notice that whenever $\beta$ is not a multiple of $\tau_0$, $\mu_0 \beta$ supports a non-trivial $d^2$-differential. Moreover, these $d^2$ differentials surject onto the module $\langle \tau_0 \rangle \subset \pi_{*,*}(\mathcal{A}(p)/\tau^{p-1})$. In particular, there are no elements of negative weight in $E^{3}_{0,*,*}$ and in the higher pages. Similarly, the $d^2$ differentials out of $\upgamma_2(\mu_0) \beta$, for $\beta \in E^2_{0,*,*}$, surject onto the module $\mu_0\tau_0 E^2_{0,*,*} \subset E^2_{2,*,*}$. Hence the whole $E^2_{2,*,*}$ column disappears from the $E^3$ page, and in particular, there are no elements of negative weight in the columns $E^3_{i,*,*}$ for $0 \leq i \leq 2$.  

We proceed now with an induction argument that uncovers all the divided powers of $\mu_0$ and identifies them as the source of all the $d^2$ differentials. At step $f-1$, we suppose to know the existence of elements $\upgamma_1(\mu_0), \ldots, \upgamma_{f-1}(\mu_0)$ behaving as divided powers in characteristic $p$ up to degree $2(f-1)$, meaning that if $p^k$ is the largest power of $p$ smaller than or equal to $f-1$, there is a homomorphism:
\begin{multline*}
    \mathbb{F}_p[\tau, \upgamma_{p^0}(\mu_0), \ldots, \upgamma_{p^k}(\mu_0)]/\langle \tau^{p-1},  \upgamma_{p^0}(\mu_0)^p, \ldots, \upgamma_{p^{k-1}}(\mu_0)^p\rangle\\
    \rightarrow \pi_{*,*}(MHH(M\mathbb{Z}/p)/\tau^{p-1})
\end{multline*}
which is an injection up to degree $2(f-1)$. The divided powers not indexed by powers of $p$ are obtained by the modulo $p$ decomposition of the index. Moreover, we know that any element in $E^2_{*,0,*} \cong \pi_{*,*}(MHH(M\mathbb{Z}/p)/\tau^{p-1})$ with degree smaller then or equal to $f-1$ is a linear combination:
\[
    \sum_{i=0}^{f-1} \beta_i \upgamma_{i}(\mu_0)
\]
with $d^2(\beta_i)=0$ and $\sum_{i=0}^{f-1} \beta_i \upgamma_{i}(\mu_0)=0$ if and only if $\beta_i=0$ for all $i$. Any multiple of $\tau_0$ in the columns $E^2_{i,*,*}$ for $0 \leq i \leq 2(f-2)$ is hit by a non-trivial differential, and any non-zero multiple of a $\upgamma_i(\mu_0)$ in the same region either supports or receives a non trivial $d^2$-differential. In particular, all elements in this region of the $E^3$ page, but in degree $(0,0,*)$, have positive weight.

The next step of the induction procedure follows pretty straightforwardly. $\upgamma_{f-1}(\mu_0)\tau_0$ has trivial $d^2$ differential and weight zero; as, by inductive hypothesis, no element in the $E^3$-page (hence in all the higher pages) in the region where a differential from this element would land has weight zero, there must be an element $\upgamma_{f}(\mu_0)\in E^2_{2f,0,0}$ supporting a differential $d^2(\upgamma_{f}(\mu_0))=\upgamma_{f-1}(\mu_0)\tau_0$, which can be identified with a product of the previous divided powers by the Leibniz rule whenever $f$ is not a power of $p$. The bound on the weight implies that, if $f$ is a power of $p$, $\upgamma_{f-1}(\mu_0)^p=0$, as it has trivial $d^2$ differential by the Leibniz rule ($\upgamma_{f-1}(\mu_0)$ has even degree) and cannot support longer differentials because of the weight. 

Next, given $\alpha \in E^2_{*,0,*}$ in degree smaller than or equal to $2f$, we can express its image via the $d^2$ differential as:
\[
    d^2(\alpha)=\left (\sum_{i=0}^{f-1} \beta_i \upgamma_{i}(\mu_0) \right) \tau_0
\]
with $d^2(\beta_i)=0$. Then:
\[
    d^2 \left (\sum_{i=0}^{f-1} \beta_i \upgamma_{i+1}(\mu_0) \right )=\left (\sum_{i=0}^{f-1} \beta_i \upgamma_{i}(\mu_0) \right) \tau_0= d^2(\alpha)
\]
so $\alpha- \sum_{i=0}^{f-1} \beta_i \upgamma_{i+1}(\mu_0)$ is a $d^2$ cycle. Consider now a sum $\sum_{i=0}^{f} \beta_i \upgamma_{i+1}(\mu_0)$, with the $\beta_i$ $d^2$ cycles. First, by a similar argument as the previous one, this sum is trivial if and only if all the $\beta_i$ are. Next, observe that the $\beta_i$, which must support a non-trivial differential, must have positive weight by the induction hypothesis. Extending linearly the differential implies that all multiples of the $\upgamma_i(\mu_0)$ and of $\tau_0$ in $E^2_{i,*,*}$ with $i \leq 2(f-1)$ vanish from the $E^3$-page; in particular, there are no elements of weight smaller than $1$ in the $E^3$ page in this region (but in degree $(0,0,*)$). This ends the induction step.

Thus the following hold:
\[
\begin{tikzcd}
    E^3_{*,0,*} 
        \arrow[r, hook]
        \arrow[rd, "\sim"]&
    E^2_{*,0,*}
        \arrow[d, two heads] \\
    &
    E^2_{*,0,*}/\langle \upgamma_i (\mu_0) \rangle \cong \pi_{*,*}(MHH(M\mathbb{Z}/p)/\tau^{p-1}) /\langle \upgamma_i (\mu_0) \rangle_{i \geq 0}
\end{tikzcd}
\]
\[
    E^3_{0,*,*} \cong E^2_{0,*,*} /\langle \tau_0 \rangle \cong \pi_{*,*}(\mathcal{A}(p)/\tau^{p-1}) /\langle \tau_0 \rangle
\]
\[
    E^3_{\star, \bullet, *} \cong E^3_{\star,0,*}\otimes_{\mathbb{F}_{p}[\tau]/\tau^{p-1}} E^3_{0,\bullet,*}
\]

In particular, we can identify a subalgebra:
\[
    \Gamma_{\mathbb{F}_p[\tau]/\tau^{p-1}}(\mu_0) \hookrightarrow \pi_{*,*}(MHH(M\mathbb{Z}/p)/\tau^{p-1})
\]
such that the inclusion is an isomorphism in degrees smaller than or equal to 2.

As the non-trivial class with the smallest positive degree in $E^3_{0,*,*}$ is $\xi_1 \in E^3_{0,2p-2,p-1}$, there can be no non-trivial differentials of length 3 coming out of the horizontal line $E^3_{*,0,*}$. As the $E^3$ page decomposes as a tensor product, we conclude that there are no non-trivial differentials of length 3 in general, and we have a graded algebra isomorphism $E^3\cong E^4$. The same applies to differentials of length from 4 up to $2p-2$, so we conclude:
\[
    E^3 \cong E^4 \cong\ldots \cong E^{2p-1}
\]
Observe that, since the spectral sequence is first quadrant, this implies that the map:
\[
    \Gamma_{\mathbb{F}_p[\tau]/\tau^{p-1}}(\mu_0)  \hookrightarrow \pi_{*,*}(MHH(M\mathbb{Z}/p)/\tau^{p-1})
\]
is an isomorphism in degree smaller then or equal to $2p-2$, and that the columns $E^{2p-1}_{i,*,*}$ vanish for $1 \leq i \leq 2p-2$.

Observe also that, as an additional conclusion, from the $E^3$ page onwards we know that there are no elements of negative weight in the spectral sequence but in degree $(0,0,*)$.

\subsection{\texorpdfstring{The $E^{2p-1}$ page}{The E\^(2p-1) page}}\label{sec:E^{2p-1}}

The next non-trivial differentials have length $2p-1$. In fact, the presence of $\xi_1 \in E^{2p-1}_{0,2p-2,p-1}$ and the convergence conditions for the spectral sequence imply the existence of an element $\lambda_1 \in E^{2p-1}_{2p-1,0,p-1}$ with $d^{2p-1}(\lambda_1)=\xi_1$. By the Leibniz rule, for any $x \in E^{2p-1}_{0,*,*}$, $d^{2p-1}(\lambda_1 x)=\xi_1 x$; these differentials are all non trivial, for non-trivial $x$, as $\xi_1 \in \pi_{*,*}(\mathcal{A}(p)/\tau^{p-1})$ is not involved in any non-trivial relation. The $d^{2p-1}$ differential produces in particular an isomorphism:
\[
    \mathbb{F}_p[\tau]/\tau^{p-1}\{\lambda_1\} \xrightarrow{d^{2p-1}} \mathbb{F}_p[\tau]/\tau^{p-1}\{\xi_1\} \cong E^{2p-1}_{0,2p-2,*}
\]
Hence $E^{2p-1}_{2p-1,0,*} \cong \mathbb{F}_p[\tau]/\tau^{p-1}\{\lambda_1\}$. Observe then that the whole column $E^{2p-1}_{2p-1,*,*}$ supports non trivial $d^{2p-1}$-differentials, hence $E^{2p}_{2p-1,*,*}\cong 0$.

Consider now the square $\lambda_1^2 \in E^{2p-1}_{2(2p-1),0,2(p-1)}$. Its $d^{2p-1}$ differential is trivial because of the Leibniz rule ($\lambda_1$ has odd degree). If $\lambda_1^2 \neq 0$, it survives to a higher page, and, due to the convergence constraints, it has to support a non-trivial differential. This cannot happen: the classes with the smallest degrees in $E^{2p-1}_{0,*,*}$ surviving to a higher page are:
\begin{gather*}
    1 \in E^{2p-1}_{0,0,0} \\
    \tau_1 \in E^{2p-1}_{0,2p-1,p-1} \\
    \xi_2 \in E^{2p-1}_{0,2p^2-2,p^2-1}
\end{gather*}
Columns $E^{2p-1}_{1,*,*}$ to $E^{2p-1}_{2p-2,*,*}$ are empty by the induction hypothesis, so the only possibility to check is $\lambda_1^2$ supporting a transgressive differential. As $2p-1<2(2p-1)-1 < 2p^2-2$ for all $p>1+\sqrt{2}/2$, this case is also ruled out.

Now, observe that any element $x \in E^{2p-1}_{*,0,*}$ can be written as $x=a\lambda_1+b$ with $d^{2p-1}(a)=d^{2p-1}(b)=0$. In fact, let $d^{2p-1}(x)=a \xi_1$; as:
\[
    0=d^{2p-1}(d^{2p-1}(x))=d^{2p-1}(a \xi_1)=d^{2p-1}(a)\xi_1
\]
by the tensor product decomposition of the $E^{2p-1}$ page and the structure of $E^{2p-1}_{0,*,*}$ it must be $d^{2p-1}(a)=0$. Consider now the element $\lambda_1 a$; one has:
\[
    d^{2p-1}(a \lambda_1)=a \xi_1 = d^{2p-1}(x)
\]
Hence $x-a\lambda_1$ is a $d^{2p-1}$-cycle. We conclude that the $d^{2p-1}$-differential is non-trivial on all the multiples of $\lambda_1$ in the $E^{2p-1}$ page by linearity; moreover its image is always a multiple of $\xi_1$, and actually the only multiples of $\xi_1$ that are not in the image of the $d^{2p-1}$-differential are the multiples of $\lambda_1\xi_1$ (which on the other hand support a non-trivial $d^{2p-1}$).

More accurately, there are commutative diagrams:
\[
\begin{tikzcd}
    E^{2p}_{*,0,*} 
        \arrow[r, hook]
        \arrow[rd, "\sim"]&
    E^{2p-1}_{*,0,*}
        \arrow[d, two heads] \\
    &
    E^{2p-1}_{*,0,*}/\langle \lambda_1 \rangle 
\end{tikzcd}
\]
\[
    E^{2p}_{*,0,*}\cong \pi_{*,*}(MHH(M\mathbb{Z}/p)/\tau^{p-1}) /\langle \upgamma_i (\mu_0), \lambda_1 \rangle_{i \geq 0}
\]
\[
    E^{2p}_{0,*,*} \cong E^{2p-1}_{0,*,*} /\langle \xi_1 \rangle \cong \pi_{*,*}(\mathcal{A}(p)/\tau^{p-1}) /\langle \tau_0, \xi_1 \rangle
\]
\[
    E^{2p}_{\star, \bullet, *} \cong E^{2p}_{\star,*}\otimes_{\mathbb{F}_{p,\tau}} E^{2p}_{\bullet,*}
\]

In particular, we can identify a subalgebra:
\[
   \Gamma_{\mathbb{F}_p[\tau]/\tau^{p-1}}(\mu_0)  \otimes_{\mathbb{F}_p[\tau]/\tau^{p-1}}  \Lambda_{\mathbb{F}_p[\tau]/\tau^{p-1}}(\lambda_1) \hookrightarrow \pi_{*,*}(MHH(M\mathbb{Z}/p)/\tau^{p-1})
\]
such that the inclusion is an isomorphism in degrees smaller than or equal to $2p-1$.

Observe that, by the above conclusions, the rows $E^{2p}_{*,i,*}$ for $1 \leq i \leq 2p-2$ and the columns $E^{2p}_{j,*,*}$ for $1 \leq j \leq 2p-1$ are empty.
\subsection{\texorpdfstring{The $E^{2p}$ page}{The E\^(2p) page}} \label{sec:E^{2p}}

This section is dedicated to the study of the $E^{2p}$ page; this is in fact just a special case of what is treated in subsection \ref{sec:E^{2p^j}}. It is exposed independently because weighting arguments are simplified by the absence of the varying exponent and the degrees are small enough (at least when $p=3$) to allow visualisation of the page in small degrees by direct computation, see figure \ref{fig:E6}.

The following proof will show a behaviour very similar to that of the $E^2$ page: there is an infinite family of elements of $\pi_{*,*}(MHH(M\mathbb{Z}/p)/\tau^{p-1})$, organised in a divided power structure, call them $\upgamma_i (\mu_1)$, that is the only source of all the differentials of length $2p$. The independence this family and other elements in motivic Hochschild homology then implies that they are everything that disappears from $E^{2p}_{*,0,*}$ passing to the next page; symmetrically, so does the ideal $\langle \tau_1 \rangle \in E^{2p}_{0,*,*}$, which lies in the image of the $d^{2p}$-differential. This produces a tensor product decomposition in the $E^{2p+1}$-page as well. The major difference with the $E^2$-page, and the major difficulty to overcome, is that the $\upgamma_i (\mu_1)$ do not share all the same weight; in fact we will see:
\[
    |\upgamma_i (\mu_1)|=(i \cdot 2p, i\cdot (p-1)).
\]
As $i$ increases, elements like $\upgamma_i (\mu_1)$ or $\upgamma_i (\mu_1) \tau_1$ could potentially be the source of very long differentials. A straightforward weight argument is not enough to exclude this possibility this time, so one of the main concerns of the following proof is to solve this issue.

The argument proceeds essentially by induction on $s$, deducing at each stage the existence and properties of $\upgamma_s(\mu_1)$ and establishing what happens in the columns $E^{2p}_{i,*,*}$ for $i<2p(s-1)$. More precisely, for each integer $s\geq 1$ we formulate four statements:
\begin{enumerate}[start=1, label={$\arabic*_s.$}]
    \item There exist elements $\mu_1=\upgamma_1 (\mu_1),\, \upgamma_2 (\mu_1), \ldots \upgamma_s (\mu_1)$, with $\upgamma_i (\mu_1) \in E^{2p}_{(2pi, 0, i(p-1))}$, respecting all the multiplicative relations of divided powers in characteristic $p$ that are expressed by a homogeneous equation of degree smaller than or equal to $2ps$. They support $d^{2p}$-differentials $d^{2p}(\upgamma_i (\mu_1))=\upgamma_{i-1} (\mu_1) \tau_1$. Recall that we use the convention $\upgamma_0 (\mu_1)=1$.
    
    \item The identity $(\upgamma_i (\mu_1))^p=0$ is known to hold for all integers $i \leq s$ but the largest power of $p$. 
    
    \item Up to degree $2ps$, every element in $E^{2p}_{*,0,*}$ can be written as $\sum_{i=0}^s \alpha_i  \upgamma_i(\mu_1)$, with $d^{2p}(\alpha_i)=0$ and $\upgamma_0(\mu_1)=1$. In particular, every non-trivial $d^{2p}$ differential originates in an element of the form $\sum_{i=1}^s \alpha_i  \upgamma_i(\mu_1)$, with $d^{2p}(\alpha_i)=0$.\\
    Moreover $\sum_{i=0}^s \alpha_i  \upgamma_i(\mu_1)=0$ if and only if $\alpha_i=0$ for all $i$.
    
    \item Every row starting with a multiple of $\tau_1$ vanishes after the $E^{2p}$-page, up to degree $j \leq 2p(s-1)$. In particular, all such elements are targets of non-trivial $d^{2p}$-differentials.
\end{enumerate}




The proof is divided into three parts.

\begin{enumerate}[label={\Alph*.}]
    \item We first show that, for each $s \geq 1$,
    \[
        1_s, 3_{s-1}\Rightarrow 3_s \Rightarrow 4_s. 
    \]
    
    \item Then we show that it is enough to prove statements $1$ and $2$ on the powers of $p$, namely the implication:
    \[
        1_1,1_p,\ldots, 1_{p^j},2_1,2_p,\ldots, 2_{p^j} \Rightarrow 1_s,2_s \text{ for all } s \leq p^{j+1}-1.
    \]
    \item Finally, we prove $1_{p^{j}}$ and $2_{p^{j}}$ with an induction argument; the base step consists of showing $1_1$ and $2_1$(which is a trivial statement), while the induction one is:
    \[
        1_{p^{j}-1},2_{p^{j}-1},3_{p^{j}-1},4_{p^{j}-1} \Rightarrow 1_{p^{j}},2_{p^{j}} \text{ for all } j\geq 1.
    \]

\end{enumerate}

The first implication roughly follows from the fact that $E^{2p}_{0,2p-1,*}$ is a free $\mathbb{F}_p[\tau]/(\tau^{p-1})$-module on a single generator. In fact, suppose we have proven statements $1_s$ and $3_{s-1}$. To prove $3_s$, consider an element $x \in E^{2p}_{j,0,*}$ for some $j\leq s$ such that:
\[
    d^{2p}(x)=y\tau_1 \text{ with } y \in E^{2p}_{j-2p,0,*}.
\]
As $j-2p \leq 2p(s-1)$ we can apply statement $3_{s-1}$ and write:
\[
    y=\sum_{i=0}^{s-1} \alpha_i \cdot \upgamma_i(\mu_1) \text{ with } d^{2p}(\alpha_i)=0 \text{ and } \upgamma_0 (\mu_1) = 1.
\]
Observe that, for $s=1$, $y$ is an element in the zeroth column: this statement reduces to the observation that its $d^{2p}$-differential is trivial.
Now consider the element:
\[
    \tilde{x}= \sum_{i=0}^{s-1} \alpha_i \cdot \upgamma_{i+1}(\mu_1)
\]
Statement $1_s$ ensures the existence of such $\upgamma_{i+1}(\mu_1)$. Moreover, by the same hypothesis:
\[
    d^{2p}(\tilde{x})=\sum_{i=0}^{s-1} \alpha_i \cdot \upgamma_i(\mu_1) \tau_1 = d^{2p}(x)
\]
But then $\beta=x-\tilde{x}$ is a $d^{2p}$-cycle, hence one can conclude:
\[
    x= \tilde{x}+\beta=\sum_{i=0}^{s-1} \alpha_i \cdot \upgamma_{i+1}(\mu_1) +\beta
\]
as desired.

One proves $4_s$ by a very similar argument: pick an element $0 \neq x \in E^{2p}_{j,*,*}$, with $j \leq 2p(s-1)$, which is a multiple of $\tau_1$. By what we have just shown and by the tensor product decomposition of the $E^{2p}$-page, this means that it can be written as:
\[
    x=\left(\sum_{i=0}^{s-1} \alpha_i\upgamma_i(\mu_1) \right)y \tau_1
\]
with $d^{2p}(\alpha_i)=0$ and $y$ not a multiple of $\tau_1$ (otherwise $x=0$ as $\tau_1^2=0$).
By what we have just seen, the element:
\[
    \sum_{i=0}^{s-1} \alpha_i\upgamma_{i+1}(\mu_1) y
\]
exists and $d^{2p}$-differentiates to $x$.

To conclude, consider a sum:
\[
    \sum_{i=0}^s \alpha_i \cdot \upgamma_i(\mu_1)=0
\]
living in a degree smaller than or equal to $2ps$. By applying the $d^{2p}$ differential:
\[
0=d^{2p}(\sum_{i=0}^s \alpha_i \upgamma_i(\mu_1))=\sum_{i=1}^s \alpha_i  \upgamma_{i-1}(\mu_1) \tau_1.
\]
From the tensor product decomposition, this implies that $\sum_{i=1}^s \alpha_i  \upgamma_{i-1}(\mu_1)=0$, hence, by $3_{s-1}$, that $\alpha_1=\ldots=\alpha_s=0$. Then also $\alpha_0=0$.

Part $B.$ of the proof is a consequence of the presentation of the divided power structure in positive characteristic $p$:
\[
   \Gamma_{\mathbb{F}_p[\tau]/\tau^{p-1}}(\mu_1) \cong \mathbb{F}_p[\tau, \upgamma_{p^i}(\mu_1)]_{i \geq 0}/\langle \tau^{p-1}, (\upgamma_{p^i}(\mu_1))^p \rangle
\]
For any $s \in \mathbb{N}$, write $s= \sum_{i \in \mathbb{N}} a_i p^i$, with $0 \leq a_i <p$. Observe that the choice of the $a_i$ is unique (and the sum is finite). Then one defines:
\[
    \upgamma_s (\mu_1) := \prod_{i \in \mathbb{N}} \left(a_i!\right)^{-1} (\upgamma_{p^i}(\mu_1))^{a_i}
\]
Under our assumptions, these products are well defined for all $s \leq p^{j+1}-1$ as they involve only the $\upgamma_{p^i}(\mu_1)$ up to $i=j$. It is also immediate from the hypothesis that their $p$-th power vanishes, thus proving $2s$. It is finally an algebraic exercise to show:

\begin{lemma}
    With the above definitions, for $s \leq p^{j+1}-1$, $d^{2p}(\upgamma_s (\mu_1))=\upgamma_{s-1} (\mu_1) \tau_1$.
\end{lemma}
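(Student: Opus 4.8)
The plan is to compute $d^{2p}(\upgamma_s(\mu_1))$ directly from the definition $\upgamma_s(\mu_1) = \prod_{i \in \mathbb{N}} (a_i!)^{-1}(\upgamma_{p^i}(\mu_1))^{a_i}$, where $s = \sum_i a_i p^i$ with $0 \le a_i < p$, using the Leibniz rule together with the known differentials $d^{2p}(\upgamma_{p^i}(\mu_1)) = \upgamma_{p^i-1}(\mu_1)\tau_1$ from statement $1_{p^i}$. This is the exact analogue of Lemma \ref{lmm:leibniz.on.div.pow}, now in characteristic $p$, so I would say explicitly that the proof parallels that one and only highlight the combinatorial differences. First I would let $K = \{i : a_i \neq 0\}$ and apply the Leibniz rule to the product; since $\mu_1$ has even degree, all the signs are trivial, and I get a sum over $i \in K$ of terms of the shape $(a_i!)^{-1} a_i (\upgamma_{p^i}(\mu_1))^{a_i - 1} \upgamma_{p^i - 1}(\mu_1)\tau_1 \cdot \prod_{u \in K, u\neq i}(a_u!)^{-1}(\upgamma_{p^u}(\mu_1))^{a_u}$.

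The key computation is that all summands with $i \neq \bar{k} := \min(K)$ vanish. For those $i$, the factor $\upgamma_{p^{\bar k}-1}(\mu_1)$ does not appear, but $(\upgamma_{p^{\bar k}}(\mu_1))^{a_{\bar k}}$ does, and since $\upgamma_{p^{\bar k}-1}(\mu_1) = \prod_{e=0}^{\bar k - 1} (a'_e!)^{-1}(\upgamma_{p^e}(\mu_1))^{p-1}$ is a product of top powers of the lower divided powers — wait, more carefully: the point is that in each $i \neq \bar k$ summand one still has the full factor $(\upgamma_{p^{\bar k}}(\mu_1))^{a_{\bar k}}$ from the untouched part of the product, while the term $\upgamma_{p^i - 1}(\mu_1)$ contains $(\upgamma_{p^{\bar k}}(\mu_1))^{p-1}$ as a factor (since $p^i - 1 = \sum_{e=0}^{i-1}(p-1)p^e$ has digit $p-1$ in position $\bar k < i$). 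Hence that summand carries $(\upgamma_{p^{\bar k}}(\mu_1))^{a_{\bar k} + p - 1}$, and since $a_{\bar k} \ge 1$ the exponent is $\ge p$, so by statement $2_{p^i}$ (the $p$-th power vanishes, as $p^i$ is not the largest relevant power here) this factor is zero. For the surviving $i = \bar k$ summand, the coefficient $(a_{\bar k}!)^{-1} a_{\bar k} = ((a_{\bar k}-1)!)^{-1}$ combines with the rest exactly so that $(\upgamma_{p^{\bar k}}(\mu_1))^{a_{\bar k}-1}\upgamma_{p^{\bar k}-1}(\mu_1) \prod_{u \neq \bar k}(\upgamma_{p^u}(\mu_1))^{a_u}$, with the correct reciprocal-factorial normalization, is by definition $\upgamma_{s-1}(\mu_1)$ — here one uses that $s - 1 = (a_{\bar k}-1)p^{\bar k} + \sum_{e < \bar k}(p-1)p^e + \sum_{u > \bar k} a_u p^u$ is precisely the base-$p$ expansion of $s-1$.

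The main obstacle I expect is bookkeeping the reciprocal factorials and verifying that the normalization constants in the definition of $\upgamma_{s-1}(\mu_1)$ match what comes out of the Leibniz computation — in characteristic $p$ one must be slightly careful that all the $a_i!$ are units (true since $a_i < p$) and that the carrying in $s \mapsto s-1$ only affects digits at positions $\le \bar k$. I would also note the degenerate cases: if $a_0 \neq 0$ then $\bar k = 0$, $\upgamma_{p^0-1}(\mu_1) = \upgamma_0(\mu_1) = 1$, and the computation collapses to the single term $\upgamma_{s-1}(\mu_1)\tau_1$ immediately. Everything else is routine, so the write-up should be short; I would present it as a direct adaptation of Lemma \ref{lmm:leibniz.on.div.pow}, emphasizing only the role of statement $2$ in killing the unwanted summands and the base-$p$ digit arithmetic that identifies the surviving term with $\upgamma_{s-1}(\mu_1)\tau_1$.
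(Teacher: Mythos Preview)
Your proposal is correct and follows essentially the same route as the paper's proof: both apply the Leibniz rule to the product $\prod_i (a_i!)^{-1}(\upgamma_{p^i}(\mu_1))^{a_i}$, argue that every summand with $i \neq \bar{k} = \min\{i : a_i > 0\}$ vanishes because it contains a $p$-th (or higher) power of $\upgamma_{p^{\bar{k}}}(\mu_1)$, and then identify the surviving $i = \bar{k}$ term with $\upgamma_{s-1}(\mu_1)\tau_1$ via the base-$p$ expansion of $s-1$. Your version is in fact slightly more careful than the paper's in pinpointing exactly which divided-power factor acquires exponent $\geq p$ and which instance of hypothesis $2$ kills it.
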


\begin{proof}
    Recall that:
    \[
        d^{2p}(\upgamma_{p^i}(\mu_1))=\upgamma_{p^i-1}(\mu_1) \tau_1= \left(\prod_{k=1}^{i-1} \left((p-1)!\right)^{-1} (\upgamma_{p^k}(\mu_1))^{p-1} \right)\tau_1.
    \]
    One has:
    \[
    \begin{split}
        d^{2p}(\upgamma_s (\mu_1))&= d^{2p}\left(\prod_{i \in \mathbb{N}} \left(a_i!\right)^{-1} (\upgamma_{p^i}(\mu_1))^{a_i} \right) \\
        &= \sum_{\substack{i \in \mathbb{N} \\ i:\, a_i > 0}} \left((a_i-1)!\right)^{-1} (\upgamma_{p^i}(\mu_1))^{a_i-1} \left(\prod_{k=1}^{i-1} \left((p-1)!\right)^{-1} (\upgamma_{p^k}(\mu_1))^{p-1} \right) \\&\qquad \tau_1\prod_{j \in \mathbb{N}-\{i\}} \left((a_j)!\right)^{-1} (\upgamma_{p^j}(\mu_1))^{a_j}
    \end{split}
      \]  
    
    by the Leibniz rule (observe that the degree of each factor is even, so no minus signs are appearing). 
    Let $\bar{i}$ be the smallest index for which $a_i>0$. Then each summand with $i \neq \bar{i}$ is null: the $p$-th power of $\upgamma_{p^i}(\mu_1)$ appears as a factor, and by hypothesis $2_{p^{\bar{i}}}$ it is zero. Hence:
    \begin{multline*}
        d^{2p}(\upgamma_s (\mu_1))= \left((a_{\bar{i}}-1)!\right)^{-1} (\upgamma_{p^{\bar{i}}}(\mu_1))^{a_{\bar{i}}-1} \left(\prod_{k=1}^{\bar{i}-1} \left((p-1)!\right)^{-1} (\upgamma_{p^k}(\mu_1))^{p-1}\right)  \\ \left(\prod_{j \in \mathbb{N}-\{\bar{i}\}} \left((a_j)!\right)^{-1} (\upgamma_{p^j}(\mu_1))^{a_j}\right) \tau_1
    \end{multline*}
    which is exactly $\upgamma_{s-1} (\mu_1) \tau_1$ according to our notation.
\end{proof}

We proceed then with point $C.$ of the proof. The base step has already been discussed in the introduction to this section; for greater clarity, we give a separate argument also for $j=1$, as it is slightly simpler than the general case and is close enough to the origin to be visualised on the spectral sequence graph. In particular, we will show:
\begin{itemize}
    \item That $\upgamma_{p-1}(\mu_1) \tau_1$ cannot support a non-trivial differential; hence, by the convergence assumption, it must be hit by some non-trivial differential, which, by the induction hypothesis on $E^{2p}$ page, must originate from the zeroth horizontal line. We call $\upgamma_p (\mu_1)$ the origin of such differential.
    \item That $(\mu_1)^p=0$. 
\end{itemize}




Let's argue the first claim by contradiction. Given the tensor product decomposition of the $E^{2p}$-page, trivial elements in $E^{2p}_{0,*,*}$ detect trivial rows. The non-trivial $\mathbb{F}_p[\tau]/\tau^{p-1}$-module generators in the zeroth column with the smallest degrees are:
\begin{equation}\label{eq:degr.weights.E2p}
    \begin{gathered} 
    1 \text{ with (degree, weight): } (0,0) \\
    \tau_1 \text{ with (degree, weight): } (2p-1, p-1) \\
    \xi_2 \text{ with (degree, weight): } (2p^2-2,p^2-1) \\
    \tau_2 \text{ with (degree, weight): } (2p^2-1,p^2-1) \\
\end{gathered}
\end{equation}
The weights of elements of higher degrees are higher as well. Each of these elements generates a free module over $\mathbb{F}_p[\tau]/\tau^{p-1}$, so weights range from that of the element to that minus $p-2$.

Suppose then by contradiction that $\upgamma_{p-1}(\mu_1) \tau_1$ supports a non-trivial differential, say of length $l_1$. As $d^{2p}(\upgamma_{p-1}(\mu_1) \tau_1)=0$, $l_1>2p$.
Even though we do not know (yet) that the $E^{l_1}$ enjoys the tensor-product decomposition, every element there is a sub-quotient of a homogeneous module in the $E^{2p}$-page. One can then choose elements $\alpha_1 \in E^{2p}_{*,0,*}$ and $\beta_1  \in E^{2p}_{0,*,*}$ such that $\alpha_1 \beta_1$ is a non-trivial summand of an element belonging to the image $d^{l_1}(\upgamma_{p-1}(\mu_1) \tau_1)$. Then:
\[
    |\alpha_1 \beta_1| = ((p-1)2p-l_1, 2p-1+l_1-1, p(p-1))
\]
Now, $\beta_1$ lies in the row of $\xi_2$ or higher, so it has minimum degree $2p^2-2$, so $2p-2+l_1 \geq 2p^2-2$, in other words, $l_1 \geq 2p^2-2p$. But then the degree of $\alpha_1$ is $(p-1)2p-l_1\leq (p-1)2p-2p^2-2p=0$, so it must be $0$, by the shape of the spectral sequence. (We can then assume $\alpha_1=1$ without loss of generality). Hence:
\[
    |\alpha_1 \beta_1| = (0, 2p^2-2, p^2-p)
\]
In the $E^{2p}$-page, the module in that position is generated by $\xi_2$, which has weight $p^2-1$. The minimum weight achieved there, given by the greatest non-trivial power of  $\tau$, $\tau^{p-2}$, is $p^2-p+2>p^2-p$, hence there can be no non-trivial differential out of $\upgamma_{p-1}(\mu_1) \tau_1$. There must be then an element $\upgamma_{p} (\mu_1) \in E^{2p}_{2p^2, p(p-1)}$ with $d^{2p}(\upgamma_p (\mu_1))= \upgamma_{p-1}(\mu_1) \tau_1$.

Consider now $(\mu_1)^p$. We already argued that $d^{2p}((\mu_1)^p)=0$. We show that also all the higher differentials are trivial. Given the shape of the $E^{2p}$-page, the next (and only) possible non-trivial differential could be a $d^{2p^2}$, reaching degrees $(0, 2p^2-1, p(p-1))$. The module here is (a subquotient of the module) generated by $\tau_2$: the elements here have weight $p^2-1-p+2 \leq w \leq p^2-1$. But $p(p-1)=p^2-p < p^2-p+1$, so no non-trivial arrow is possible. We can conclude that, as $(\mu_1)^p$ is a permanent cycle, given the constraints from the convergence result of the spectral sequence, we must have $(\mu_1)^p=0$.

Let's move to the general induction argument. We first prove statement $1_{p^j}$. Consider the element $\upgamma_{p^j-1} (\mu_1) \tau_1 \in E^{2p}_{(p^j-1)2p, 2p-1, p^j(p-1)}$. The proof is split into the following three considerations:
\begin{enumerate}[label={(\roman*)}]
    \item No polynomial of elements of degree smaller then $p^j(2p)$ hits it with a $d^{2p}$ differential.
    \item It has trivial $d^{2p}$.
    \item It cannot support a longer non-trivial differential.
\end{enumerate}
Since it must disappear in the $E^{\infty}$-page, there must be then a new class $\upgamma_{p^j} (\mu_1) \in E^{2p}_{(p^j)2p, 0, p^j(p-1)}$ with $d^{2p}(\upgamma_{p^j} (\mu_1))=\upgamma_{p^j-1} (\mu_1) \tau_1$, proving $1_{p^j}$.

To prove $(i)$, suppose by contradiction such a polynomial $P$ exists. Recall that the columns $E^{2p}_{i, *, *}$ are empty for all $1\leq i \leq 2p$, so all factors in the monomials of the polynomial have degree at most $2p(p^j-1)$. We can then apply our induction hypotheses $1_{p^j-1}$, $2_{p^j-1}$ and $3_{p^j-1}$ to each monomial. Hence, by rearranging the terms, we can write:
\[
    P= \sum_{s=1}^{p^j-1} a_s \upgamma_s (\mu_1) + k (\upgamma_{p^{j-1}} (\mu_1))^p
\]
with $d^{2p}(a_s)=0$ and $k \in \mathbb{F}_p[\tau]/\tau^{p-1}$.

Apply the $d^{2p}$ differential:
\[
 d^{2p}(P)=\sum_{s=1}^{p^j-1} a_s \upgamma_{s-1} (\mu_1) \tau_1=\upgamma_{p^{j}-1} (\mu_1) \tau_1.
\]
By the tensor product decomposition, this is equivalent to a relation:
\[
\upgamma_{p^{j}-1} (\mu_1) - \sum_{s=1}^{p^j-1} a_s \upgamma_{s-1} (\mu_1)=0
\]
living in degree $(p^j-1)2p$, which contradicts hypothesis $3_{p^j-1}$.

The proof of $(ii)$ is a simple computation:
\[
d^{2p}(\upgamma_{p^{j}-1} (\mu_1) \tau_1)=\upgamma_{p^{j}-2} (\mu_1) \tau_1 \tau_1=0
\]
because $\tau_1^2=0$.

The proof of $(iii)$ is a bit more involved; it roughly goes as follows: one assumes by contradiction that there exists a non-trivial differential out of $\upgamma_{p^{j}-1} (\mu_1) \tau_1$, which, in light of $(ii)$, is longer than a $d^{2p}$, and consider the degree and weight of the element that is hit. This happens on a higher page, so the element does not correspond in general to a single element of the $E^{2p}$-page, but rather to a homogeneous submodule. Since our argument involves only degrees and weights, we might as well identify it with a monomial in the $E^{2p}$-page, decomposing it as a product of some $\alpha$ on the horizontal line $E^{2p}_{*,0,*}$ and some $\beta$ on the vertical line $E^{2p}_{0,*,*}$. We then look at a non-trivial image of $\alpha$ via some longer differential and similarly decompose it thanks to analogous considerations. We repeat the process until we hit the vertical line. By carefully tracing variation in degrees and weights (we will see that the differentials involved have to be quite long) and finally applying lemma \ref{lemma:estimate.on.monomials} we reach a contradiction. This is visually rendered in figure \ref{fig:induction.E2p}.

\begin{figure}
    \centering
    \adjustbox{max width=\textwidth, center}{\begin{tikzpicture}
        \filldraw[almostwhite] (10.5,0) rectangle (12.5,9.3);

        \draw[red!30] (0,3) -- (27.7,3);
        \draw[red!30] (27.5,0) -- (27.5,3.2);
        
        \draw[red!30] (0,7) -- (27.7,7); 

        \draw[gray!60] (0,8) -- (21.7,8);
        \draw[gray!60] (21.5,0) -- (21.5,8.2);

        \draw[gray!60] (0,7.5) -- (13.2,7.5);
        \draw[gray!60] (13,0) -- (13,7.7);

        \draw[gray!60] (0,8.5) -- (10.2,8.5);
        \draw[gray!60] (10,0) -- (10,8.7);

        \draw[black, ->] (0,0) -- (0,9.5);
        \draw[black] (0,0) -- (10.5,0);
        \draw[black, dashed] (10.5,0) -- (12.5,0);
        \draw[black, ->] (12.5,0) -- (28,0);

        {\filldraw[red]
        (27.5,3) circle (1pt) node[right]{\normalsize $\upgamma_{2p^j-1}(\mu_1)\tau_1$}
        (27.5,0) circle (1pt) node[below]{\normalsize $\upgamma_{2p^j-1}(\mu_1)$}
        (0,3) circle (1pt) node[left]{\normalsize $\tau_1$}
        (0,7) circle (1pt) node[left]{\normalsize $\xi_2$}
        ;}
        
        {\filldraw[black]
        (21.5,8) circle (1pt) 
        (21.5,0) circle (1pt) node[below]{\normalsize $\alpha_1$}
        (0,8) circle (1pt) node[left]{\normalsize $\omega_1$}
        
        (13,7.5) circle (1pt) 
        (13,0) circle (1pt) node[below]{\normalsize $\alpha_2$}
        (0,7.5) circle (1pt) node[left]{\normalsize $\omega_2$}
        
        (10,8.5) circle (1pt) 
        (10,0) circle (1pt) node[below]{\normalsize $\alpha_{n-1}$}
        (0,8.5) circle (1pt) node[left]{\normalsize $\omega_{n-1}$}
        
        (0,9) circle (1pt) node[left]{\normalsize $\omega_n$}
        ;}

        \draw[black, ->] (27.5,3) -- (21.5,8) node[right=6pt,  midway] {\normalsize  $d^{k_1}$};
        \draw[black, ->] (21.5,0) -- (13,7.5)  node[right=6pt,  midway] {\normalsize  $d^{k_2}$};
        \draw[black] (13,0) -- (12.5,0.4375);
        \draw[black, ->] (10.5,8.1) -- (10,8.5);
        \draw[black, ->] (10,0) -- (0,9)  node[right=6pt,  midway] {\normalsize  $d^{k_n}$};
        
    \end{tikzpicture}}
    \caption{A visualisation of what should happen if $\upgamma_{2p^j-1}(\mu_1)\tau_1$ supported a non trivial differential.}
    \label{fig:induction.E2p}
\end{figure}
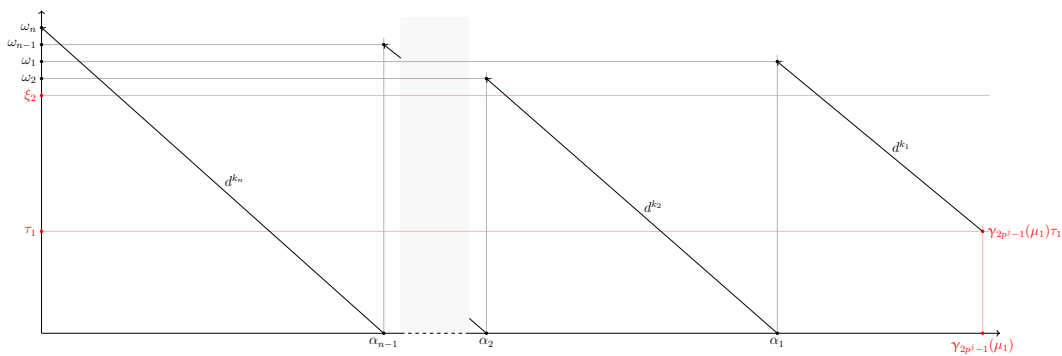

First things first, suppose then $\upgamma_{p^{j}-1} (\mu_1) \tau_1$ survives to a higher page and supports a non-trivial longer differential $d^{k_1}$. Let 
\begin{gather*}
    \alpha_1 \in E^{2p}_{(p^j-1)2p-k_1,0, p^j(p-1)-w_1}\\
    \beta_1 \in E^{2p}_{0,2p+k_1-2, w_1}
\end{gather*}
be such that $\alpha_1\beta_1$ has the same triple degree of $d^{k_1}(\upgamma_{p^{j}-1} (\mu_1) \tau_1)$ and $\alpha_1\beta_1 \neq 0 \in E^{k_1}$. 

Such elements exist because the $E^{k_1}$-page is a homogeneous subquotient of the $E^{2p}$-page, which by hypothesis decomposes as a tensor product of a vertical and a horizontal ring. So elements in the $E^{k_1}$-page can be seen as homogeneous sums of elements from the $E^{2p}$-page, modulo some additional relation.
We can impose a lower bound on $k_1$. From \ref{eq:degr.weights.E2p}, the next non trivial row at the $E^{2p}$ page is the $(2p^2-2)$-th, represented by the element $\xi_2$, so $2p+k_1-2\geq 2p^2-2$, or $k_1 \geq 2p^2-2p$. Observe that we have a lower bound on the weight: as the weights of the generators of the dual Steenrod algebra, the smallest weight appearing in the module generated by $\xi_2$ (that of $\tau^{p-2}\xi_2$) is the smallest weight that $\beta_1$ can have. In particular, $w_1 \geq p^2-p+1$.

Next, we consider the behaviour in the spectral sequence of $\alpha_1$. Observe that its degree is 
\begin{align*}
    (p^j-1)2p-k_1\leq (p^j-1)2p-2p^2+2p=2p^{j+1}-2p^2 \\
    =2p(p^j-p)\leq 2p(p^j-2),
\end{align*}
so we are in the range where both $3_{p^j-1}$ and $4_{p^j-1}$ hold: in particular,  $\tau_1$ does not divide $\beta_1$. Also, $\alpha_1$ has to support a differential longer than a $d^{2p}$: if that were not the case, by $3_{p^j-1}$ we could write:
\[
    \alpha_1= \sum_{i=0}^{p^j-1} \tilde{\alpha_i} \upgamma_i(\mu_1)
\]
$d^{2p}(\tilde{\alpha_i})=0$, with some $\tilde{\alpha_i}\neq 0$ for $i >0$. By $4_{p^j-1}$, $\beta_1$ is no multiple of $\tau_1$ (otherwise $\alpha_1 \beta_1$ would vanish after the $E^{2p}$-page). But then also $\alpha_1 \beta_1$ would support a non-trivial $d^{2p}$-differential.

Say then $\alpha_1$ supports a $d^{k_2}$. As before, we can identify:
\begin{gather*}
    \alpha_2 \in E^{2p}_{(p^j-1)2p-k_1-k_2,0, p^j(p-1)-w_1-w_2}\\
    \beta_2 \in E^{2p}_{0,k_2-1, w_2}
\end{gather*}
such that $\alpha_2\beta_2$ has the same triple degree of $d^{k_2}(\alpha_1)$ and $\alpha_2\beta_2 \neq 0 \in E^{k_2}$. In particular, $\tau_1$ does not divide $\beta_2$.
As before, the shortest possibility for $d^{k_2}$ is to reach the line generated by $\xi_2$. This imposes:
\begin{gather*}
    k_2 \geq 2p^2-1\\
    w_2 \geq p^2-p+1.
\end{gather*}

One can apply the same reasoning to the element $\alpha_2$, thus finding elements:
\begin{gather*}
    \alpha_3 \in E^{2p}_{(p^j-1)2p-k_1-k_2-k_3,0, p^j(p-1)-w_1-w_2-w_3}\\
    \beta_3 \in E^{2p}_{0,k_3-1, w_3}
\end{gather*}
with
\begin{gather*}
    k_3 \geq 2p^2-1\\
    w_3 \geq p^2-p+1,
\end{gather*}
with $\tau_1$ not dividing $\beta_2$. We repeat the process $n$ times, finding analogoud sonclusions for the $\alpha_i$ and $\beta_i$, until the differential $d^{k_n}(\alpha_{n-1})=\beta_n$ lands in the zeroth column. Let $w_n$ be the weight of such $\beta_n$; observe that the conditions:
\begin{gather*}
    k_n \geq 2p^2-1\\
    w_n \geq p^2-p+1
\end{gather*}
hold also in this case. From the degrees and weights of the $\alpha_i$, we get the equations:
\begin{equation}\label{eq:sum.ki.wi.mu1}
    \begin{gathered}
    (p^j-1)2p-\sum_{i=1}^n k_i=0 \\
    p^j(p-1)-\sum_{i=1}^{n-1} w_i=w_n.
\end{gathered}
\end{equation}

Lemma \ref{lemma:estimate.on.monomials} provides further estimates on the quantities $k_i$ and $w_i$. In fact, the $\beta_i$ all come from elements of the dual Steenrod algebra that can be expressed as homogeneous sums of monomials where $\tau_0$, $\xi_1$ and $\tau_1$ are absent. We have then inequalities:
\begin{gather*}
    Q(2)(2p+k_1-2) \leq w_1 \leq \frac{1}{2}(2p+k_1-2) \\
    Q(2)(k_j-1) \leq w_j \leq \frac{1}{2}(k_j-1) \text{ for } j \geq 2
\end{gather*}
where:
\[
Q(2)=\frac{1}{2}-\frac{p-2}{2p^{2}-2}-\frac{1}{4p^{2}-2}.
\]

Summing on all indices $j$:
\[
    Q(2) \left (2p+k_1-2+ \sum_{j=2}^n (k_j-1) \right) \leq \sum_{j=1}^n w_j \leq \frac{1}{2} \left (2p+k_1-2+ \sum_{j=2}^n (k_j-1) \right )
\]
Or:
\[
Q(2)\sum_{j=1}^n k_j + Q(2)(2p-n-1) \leq \sum_{j=1}^n w_j\leq \frac{1}{2}\sum_{j=1}^n k_j + \frac{1}{2}(2p-n-1)
\]
Plugging in the equalities \ref{eq:sum.ki.wi.mu1}:
\[
Q(2)(p^j-1)2p + Q(2)(2p-n-1) \leq p^j(p-1)\leq (p^j-1)p + \frac{1}{2}(2p-n-1)
\]
Rearranging the first inequality in particular gives:
\begin{equation*}
    \begin{gathered}
    Q(2)(p^j-1)2p- p^j(p-1)\leq Q(2)(n+1-2p)\\
    (p^j-1)2p- \frac{p^j(p-1)}{Q(2)} +2p \leq n+1\\
    2p^{j+1}- \frac{p^j(p-1)}{Q(2)} -1 \leq n.
\end{gathered}
\end{equation*}
Now go back to equation \ref{eq:sum.ki.wi.mu1}; in particular using the fact that each $w_i \geq p^2-p+1$:
\[
 p^j(p-1)=\sum_{i=1}^{n} w_i\geq (p^2-p+1)n \geq (p^2-p+1) \left(2p^{j+1}- \frac{p^j(p-1)}{Q(2)} -1 \right)
\]
This is a contradiction. In fact:
\[
\begin{gathered}
    \frac{p^j(p-1)}{p^2-p+1}  \geq 2p^{j+1}- \frac{p^j(p-1)}{Q(2)} -1 \\
    2p^{j+1}-\frac{p^j(p-1)}{p^2-p+1} -1 \leq 2p^{j+1}-\frac{p^j(p-1)}{p^2-p+1} \leq  \frac{p^j(p-1)}{Q(2)}\\
    2p-\frac{p-1}{p^2-p+1} \leq \frac{p-1}{Q(2)}\\
    \frac{2p}{p-1}-\frac{1}{p^2-p+1}\leq \frac{1}{Q(2)}
\end{gathered}
\]
Plug now in the estimate \ref{cor:rough.est.Q(i)}:
\[
\begin{gathered}
    \frac{2p}{p-1}-\frac{1}{p^2-p+1}\leq \frac{4(p^2-1)}{2p^2-2p+1}\\
    \frac{2p(2p^2-2p+1)}{p-1}-\frac{2p^2-2p+1}{p^2-p+1}\leq 4p^2-4\\
    \frac{2p(2p^2-2p)}{p-1}+\frac{2p}{p-1}-\frac{2p^2-2p+1+1-1}{p^2-p+1}\leq 4p^2-4\\
    4p^2+\frac{2p-2+2}{p-1}-\frac{2(p^2-p+1)-1}{p^2-p+1}\leq 4p^2-4\\
    2+\frac{2}{p-1}-2+\frac{1}{p^2-p+1}\leq -4\\
    \frac{2}{p-1}+\frac{1}{p^2-p+1}\leq -4\\
\end{gathered}
\]
which is a contradiction, as the left-hand side is positive. This concludes the proof of $3_{p^j}$.

The proof of $4_{p^j}$ follows a similar strategy. In particular, as:
\[
    d^{2p}((\upgamma_{p^{j-1}} (\mu_1))^p)=p (\upgamma_{p^{j-1}} (\mu_1))^{p-1} \tau_1=0
\]
(recall that $\upgamma_{p^{j-1}} (\mu_1)$ sits in even degree), we just need to prove that it cannot support a nontrivial longer differential. Then, since there are no permanent cycles away from $(0,0,*)$, we must have $(\upgamma_{p^{j-1}} (\mu_1))^p=0$. Recall that:
\[
|(\upgamma_{p^{j-1}} (\mu_1))^p|=(p\cdot 2p^{j}, 0, p \cdot p^{j-1}(p-1))=(2p^{j+1},0,p^j(p-1)).
\]

Suppose by contradiction that there is some $d^{k_1}((\upgamma_{p^{j-1}} (\mu_1))^p)\neq 0 \in E^{k_1}$. We can then identify elements:
\begin{gather*}
    \alpha_1 \in E^{2p}_{2p^{j+1}-k_1,0, p^j(p-1)-w_1}\\
    \beta_1 \in E^{2p}_{0,k_1-1, w_1}
\end{gather*}
such that $\alpha_1\beta_1 \neq 0 \in E^{k_1}$ sits in the same degrees and weight of $d^{k_1}((\upgamma_{p^{j-1}} (\mu_1))^p)$; as before, such elements can always be found. Moreover, due to the induction hypothesis, we know that $\tau_1$ cannot divide $\beta_1$.
As $k_1 > 2p$, because of the structure of the $E^{2p}$-page, we can conclude by \ref{eq:degr.weights.E2p}:
\[
\begin{gathered}
    k_1-1 \geq 2p^2-2\\
    w_1 \geq p^2-p+1.
\end{gathered}
\]

We study then the behaviour of $\alpha_1$ in the spectral sequence; as its degree is:
\[
    2p^{j+1}-k_1\leq 2p^{j+1}-2p^2+1 \leq 2p(p^j-2)
\]
by a simple computation, we are in the range where both $3_{p^{j}-1}$ and $4_{p^{j}-1}$ apply. The same argument as before 
gives that $\alpha_1$ supports a differential $d^{k_2}$ with $k_2>2p$; in particular, it allows to find elements:
\begin{gather*}
    \alpha_2 \in E^{2p}_{2p^{j+1}-k_1-k_2,0, p^j(p-1)-w_1-w_2}\\
    \beta_2 \in E^{2p}_{0,k_2-1, w_2}
\end{gather*}
with $\alpha_2\beta_2 \neq 0 \in E^{k_2}$ sitting in the same degrees and weight of $d^{k_2}(\alpha_1)$ and $\tau_1$ not dividing $\beta_2$; the constraints:
\[
\begin{gathered}
    k_2-1 \geq 2p^2-2\\
    w_2 \geq p^2-p+1
\end{gathered}
\]
apply here as well.

We proceed in the same way for other $n-2$ times, drawing the same conclusions, until
\[
    d^{k_n}(\alpha_{n-1})=\beta_n \in E^{2p}_{0,k_n-1, w_n}
\]
lands in the zeroth column. Naturally, $k_n$ and $w_n$ satisfy the same constraints as the other $k_i$ and $w_i$.
Putting everything together, we get the equations:
\begin{equation} \label{eq:sum.ki.wi.gamma.pj.mu1.^p}
    \begin{gathered}
    2p^{j+1}= \sum_{i=1}^n k_i\\
    p^j(p-1)= \sum_{i=1}^n w_i.
\end{gathered}
\end{equation}

Because the $\beta_i$ can be seen as elements of the dual motivic Steenrod algebra modulo $p$ that are not multiples of $\tau_0$, $\tau_1$ or $\xi_1$, from \ref{lemma:estimate.on.monomials}  we have the additional constraints:
\[
    Q(2)(k_i-1) \leq w_i \leq \frac{k_i-1}{2} \text{ for all } 1 \leq i \leq n.
\]
Focus on the first inequality. Summing on all indices $i$ and plugging in \ref{eq:sum.ki.wi.gamma.pj.mu1.^p}:
\[
\begin{gathered}
    Q(2) \left (\sum_{i=1}^n k_i \right)-Q(2)n \leq \sum_{i=1}^n w_i \\
    Q(2)(2p^{j+1})-Q(2)n \leq p^j(p-1)
\end{gathered}
\]
So
\[
n \geq 2p^{j+1} - \frac{p^j(p-1)}{Q(2)}.
\]
Now go back to \ref{eq:sum.ki.wi.gamma.pj.mu1.^p}, and recall that each $k_i \geq 2p^2-1$:
\[
    2p^{j+1}= \sum_{i=1}^n k_i \geq n(2p^2-1) \geq \left (2p^{j+1} - \frac{p^j(p-1)}{Q(2)}\right)(2p^2-1).
\]
This is a contradiction. In fact:
\[
\begin{gathered}
    2p^{j+1}\geq \left (2p^{j+1} - \frac{p^j(p-1)}{Q(2)}\right)(2p^2-1)\\
    2p \geq \left (2p-\frac{p-1}{Q(2)}\right)(2p^2-1)\\
    Q(2) \geq \left (Q(2)-\frac{p-1}{2p}\right)(2p^2-1)\\
    \frac{p-1}{2p}(2p^2-1) \geq Q(2)(2p^2-2)   
\end{gathered}  
\]
Now use the estimate \ref{cor:rough.est.Q(i)}:
\[
\begin{gathered}
    \frac{p-1}{2p}(2p^2-1) \geq Q(2)(2p^2-2)\geq \frac{2p^2-2p+1}{4(p^2-1)}(2p^2-2)=\frac{2p^2-2p+1}{2}\\
    (p-1)(2p^2-1)\geq p(2p^2-2p+1)\\
    2p^3-2p^2-p+1 \geq 2p^3-2p^2+p\\
    1 \geq 2p
\end{gathered}
\]
which is a clear contradiction. We have thus proven $4_{p^j}$, concluding the induction argument for the $E^{2p}$-page.

We can then conclude that there are commutative diagrams:
\[
\begin{tikzcd}
    E^{2p+1}_{*,0,*} 
        \arrow[r, hook]
        \arrow[rd, "\sim"]&
    E^{2p}_{*,0,*}
        \arrow[d, two heads] \\
    &
    E^{2p}_{*,0,*}/\langle \upgamma_i (\mu_1) \rangle_{i \geq 1}
\end{tikzcd}
\]
\[
    E^{2p+1}_{*,0,*}\cong \pi_{*,*}(MHH(M\mathbb{Z}/p)/\tau^{p-1}) /\langle \upgamma_i (\mu_0), \lambda_1, \upgamma_i (\mu_1) \rangle_{i \geq 1}
\]
\[
    E^{2p+1}_{0,*,*} \cong E^{2p}_{0,*,*} /\langle \tau_1 \rangle \cong \pi_{*,*}(\mathcal{A}(p)/\tau^{p-1}) /\langle \tau_0, \xi_1, \tau_1 \rangle
\]
\[
    E^{2p+1}_{\star, \bullet, *} \cong E^{2p+1}_{\star,0,*} \otimes_{\mathbb{F}_{p}[\tau]/\tau^{p-1}} E^{2p+1}_{0,\bullet,*}
\]

In particular, we can identify a subalgebra:
\begin{multline}\label{eq:incl.iso.E2p}
    \Gamma_{\mathbb{F}_p[\tau]/\tau^{p-1}}(\mu_0)  \otimes_{\mathbb{F}_p[\tau]/\tau^{p-1}}  \Lambda_{\mathbb{F}_p[\tau]/\tau^{p-1}}(\lambda_1) \otimes_{\mathbb{F}_p[\tau]/\tau^{p-1}} \Gamma_{\mathbb{F}_p[\tau]/\tau^{p-1}}(\mu_1)\\
    \hookrightarrow \pi_{*,*}(MHH(M\mathbb{Z}/p)/\tau^{p-1})
\end{multline} 
such that the inclusion is an isomorphism in degrees smaller than or equal to $2p$.

Observe that, by the above conclusions, the rows $E^{2p+1}_{*, i,*}$ for $1 \leq i \leq 2p^2-3$ are empty; consequently, by the convergence constraint, so must be the columns $E^{2p+1}_{j,*,*}$ for $1 \leq j \leq 2p^2-2$. This shows in particular that the inclusion appearing in equation \ref{eq:incl.iso.E2p} is an isomorphism actually up to degree $2p^2-2$. We will expand on this in the next section.
\subsection{The induction step}\label{subs:induction.step.p}

We prove now the general induction step. First of all, we clarify the induction hypothesis, which we have kept more or less implicit up to now. The $t$-th stage of the induction studies the behaviour of the spectral sequence in the $E^t$-page and illustrates the appearance of the $E^{t+1}$-page. More precisely, we distinguish for the induction hypothesis $H_t$ three cases:

\begin{itemize}
    \item[$H_{2p^j-1}$] For $t=2p^j-1$, for some integer $j$, the source of all the $d^{2p^j-1}$-differentials is the subalgebra generated by a square-zero element $\lambda_j \in E^{2p^j-1}_{(2p^j-1, 0, p^j-1)}$, that hits transgressively $\xi_j \in E^{2p^j-1}_{(0, 2p^j-2, p^j-1)}$. In fact, there are commutative diagrams:
\[
\begin{tikzcd}
    E^{2p^j}_{*,0,*} 
        \arrow[r, hook]
        \arrow[rd, "\sim"]&
    E^{2p^j-1}_{*,0,*}
        \arrow[d, two heads] \\
    &
    E^{2p^j-1}_{*,0,*}/\langle \lambda_j \rangle
\end{tikzcd}
\]
so that:
\[
    \begin{adjustbox}{max width=\textwidth, center}
    $\displaystyle
        E^{2p^j}_{*,0,*} \cong \pi_{*,*}(MHH(M\mathbb{Z}/p)/\tau^{p-1})/\langle \upgamma_i(\mu_0), \upgamma_i(\mu_1), \ldots, \upgamma_i(\mu_{j-1}), \lambda_1, \ldots, \lambda_j \rangle_{i\geq 1},
    $    
    \end{adjustbox}
\]
\begin{align*}
    E^{2p^j}_{0,*,*} &\cong E^{2p^j-1}_{0,*,*} /\langle \xi_j \rangle
    \\ &\cong \pi_{*,*}(\mathcal{A}(p)/\tau^{p-1}) /\langle \tau_0, \tau_1, \ldots, \tau_{j-1}, \xi_1, \ldots, \xi_j \rangle
\end{align*}
and a decomposition:
\begin{align*}
    E^{2p^j}_{\star, \bullet, *} \cong E^{2p^j}_{\star,0,*} \otimes_{\mathbb{F}_{p}[\tau]/\tau^{p-1}} E^{2p^j}_{0,\bullet,*}
\end{align*}

In particular, we can identify a subalgebra:
\begin{multline*}
    \left ( \bigotimes_{h=0}^{j-1}  \Gamma_{\mathbb{F}_p[\tau]/\tau^{p-1}}(\mu_h) \right ) \otimes_{\mathbb{F}_p[\tau]/\tau^{p-1}} \left (  \bigotimes_{h=1}^{j}  \Lambda_{\mathbb{F}_p[\tau]/\tau^{p-1}}(\lambda_h) \right ) \\
    \hookrightarrow \pi_{*,*}(MHH(M\mathbb{Z}/p)/\tau^{p-1})
\end{multline*}
such that the inclusion is an isomorphism in degrees smaller than or equal to $2p^j-1$.
\begin{rmk}
    This description of $\pi_{*,*}(MHH(M\mathbb{Z}/p)/\tau^{p-1})$, together with the already known  structure of $\pi_{*,*}(\mathcal{A}(p)/\tau^{p-1})$, implies that in the $E^{2p^j}$-page the rows with indices from 1 to $2p^j-2$ and the columns with indices from 1 to $2p^j-1$ are empty. This is compatible with the spectral sequence being first-quadrant and the sparsity of the $E^{\infty}$-page.
\end{rmk}

\item[$H_{2p^j}$]  For $t=2p^j$, for some integer $j$, the source of all the $d^{2p^j}$-differentials is the subalgebra generated by the divided powers $\upgamma_i (\mu_j)$ on an element $\mu_j \in E^{2p^j}_{(2p^j, 0, p^j-1)}$. In particular, $\mu_j$ hits transgressively $\tau_j \in E^{2p^j}_{(0, 2p^j-1, p^j-1)}$. In fact, there are commutative diagrams:
\[
\begin{tikzcd}
    E^{2p^j+1}_{*,0,*} 
        \arrow[r, hook]
        \arrow[rd, "\sim"]&
    E^{2p^j}_{*,0,*}
        \arrow[d, two heads] \\
    &
    E^{2p^j}_{*,0,*}/\langle \upgamma_i (\mu_j) \rangle_{i \geq 1}
\end{tikzcd}
\]
so that:
\[
\begin{adjustbox}{max width=\textwidth, center}
    $\displaystyle
    E^{2p^j+1}_{*,0,*} \cong \pi_{*,*}(MHH(M\mathbb{Z}/p)/\tau^{p-1})/\langle \upgamma_i(\mu_0), \upgamma_i(\mu_1), \ldots, \upgamma_i(\mu_{j}), \lambda_1, \ldots, \lambda_j \rangle_{i\geq 1},
    $
\end{adjustbox}
\]
\begin{align*}
    E^{2p^j+1}_{0,*,*} &\cong E^{2p^j}_{0,*,*} /\langle \tau_j \rangle
                    \\ &\cong \pi_{*,*}(\mathcal{A}(p)/\tau^{p-1}) /\langle \tau_0, \tau_1, \ldots, \tau_{j}, \xi_1, \ldots, \xi_j \rangle
\end{align*}
and a decomposition:
\begin{align*}
    E^{2p^j+1}_{\star, \bullet, *} \cong & E^{2p^j+1}_{\star,0,*} \otimes_{\mathbb{F}_{p}[\tau]/\tau^{p-1}} E^{2p^j+1}_{0,\bullet,*}
\end{align*}

In particular, we can identify a subalgebra:
\begin{multline*}
    \left ( \bigotimes_{h=0}^{j}  \Gamma_{\mathbb{F}_p[\tau]/\tau^{p-1}}(\mu_h) \right ) \otimes_{\mathbb{F}_p[\tau]/\tau^{p-1}} \left (  \bigotimes_{h=1}^{j}  \Lambda_{\mathbb{F}_p[\tau]/\tau^{p-1}}(\lambda_h) \right ) \\
    \hookrightarrow \pi_{*,*}(MHH(M\mathbb{Z}/p)/\tau^{p-1})
\end{multline*}
such that the inclusion is an isomorphism in degrees smaller than or equal to $2p^j$.
\begin{rmk}\label{rmk:rows.columns.in.2p^j+1}
    Observe that this description implies that in the $E^{2p^j+1}$-page, the columns with indices from 1 to $2p^j$ are empty. Because the spectral sequence is first quadrant, we know that the rows from 1 to $2p^j-1$ must be empty as well. However, our knowledge of $\pi_{*,*}(\mathcal{A}(p)/\tau^{p-1})$ allows us to look further: in the $E^{2p^j+1}$-page, the rows with indices from 1 to $2p^{j+1}-3$ are already empty; in row $2p^{j+1}-2$ the element $\xi_{j+1}$ waits to be killed at a higher page.
\end{rmk}

\item[$H_t$] For all the other $t$'s, nothing happens: all the $d^t$-differentials are trivial, hence $E^{t+1}\cong E^t$. In particular, if $\bar{j}$ is the integer with $2p^{\bar{j}}<t< 2p^{\bar{j}+1}-1$, the $E^{t+1}$-page presents the same tensor product decomposition as the $E^{2p^{\bar{j}}+1}$-page. Moreover, we can extend our knowledge on the inclusion:
\begin{multline*}
    \left ( \bigotimes_{h=0}^{\bar{j}}  \Gamma_{\mathbb{F}_p[\tau]/\tau^{p-1}}(\mu_h) \right ) \otimes_{\mathbb{F}_p[\tau]/\tau^{p-1}} \left (  \bigotimes_{h=1}^{\bar{j}}  \Lambda_{\mathbb{F}_p[\tau]/\tau^{p-1}}(\lambda_h) \right ) \\
    \hookrightarrow \pi_{*,*}(MHH(M\mathbb{Z}/p)/\tau^{p-1})
\end{multline*}
proving that it is an isomorphism up to degree $t$. In particular, all columns in the $E^{t+1}$-page indexed by 1 to $t$ are empty.
\end{itemize}

\subsubsection{Pages where nothing happens}

We begin with the last case, which is the easiest. Let $2p^{j}<t< 2p^{j+1}-1$ and assume $H_{t-1}$. As we are dealing with a first quadrant spectral sequence, due to the tensor product decomposition of the $E^t$-page, any differential in the spectral sequence must have a counterpart in the bottom horizontal line. But from remark \ref{rmk:rows.columns.in.2p^j+1} the first non-zero row it can encounter is the $2p^{j+1}-2> t-1$: it is too high. So all the $d^t$-differentials are trivial, and $E^{t+1} \cong E^t$.

From the convergence requirement of the spectral sequence, there cannot be permanent cycles outside of degree $(0,0)$; any element in $E^{t+1}_{t,0,*}$ would not be able to support any non-trivial differentials, creating a permanent cycle. So $E^{t+1}_{t,0,*}\cong 0$ and hence, by the tensor product decomposition of the $E^{t+1}$-page, $E^{t+1}_{t,*,*}\cong 0$. But then the inclusion:
\begin{multline*}
    \left ( \bigotimes_{h=0}^{j}  \Gamma_{\mathbb{F}_p[\tau]/\tau^{p-1}}(\mu_h) \right ) \otimes_{\mathbb{F}_p[\tau]/\tau^{p-1}} \left (  \bigotimes_{h=1}^{j}  \Lambda_{\mathbb{F}_p[\tau]/\tau^{p-1}}(\lambda_h) \right ) \\
    \hookrightarrow \pi_{*,*}(MHH(M\mathbb{Z}/p)/\tau^{p-1})
\end{multline*}
is an isomorphism also in degree $t$, proving the induction step $H_t$.
\subsubsection{\texorpdfstring{The $E^{2p^j-1}$ page}{The E\^{}(2p\^{}i-1) page}}

The argument to prove $H_{2p^j-1}$ goes in parallel with that for the $E^{2p-1}$ page described in \ref{sec:E^{2p-1}}, of which is in fact a generalisation. For the inductive proof, we assume $H_{2p^j-2}$, in particular we assume to know that:
\begin{equation}\label{eq:E^2p^j-1.page}
\begin{gathered}
    E^{2p^j-1}_{*, 0, *} \cong \pi_{*,*}(MHH(M\mathbb{Z}/p)/\tau^{p-1})/\langle \upgamma_i(\mu_0), \ldots, \upgamma_i(\mu_{j-1}), \lambda_1, \ldots, \lambda_{j-1} \rangle_{i\geq 1}\\
    E^{2p^j-1}_{0, *, *} \cong 
    \pi_{*,*}(\mathcal{A}(p)/\tau^{p-1})/\langle \tau_0, \tau_1, \ldots, \tau_{j-1}, \xi_1, \ldots, \xi_{j-1} \rangle\\
    E^{2p^j-1}_{\star, \bullet, *} \cong E^{2p^j-1}_{\star,0,*} \otimes_{\mathbb{F}_{p}[\tau]/\tau^{p-1}} E^{2p^j-1}_{0,\bullet,*}
\end{gathered}
\end{equation}
and that there is a subalgebra:
\begin{multline*}
    \left ( \bigotimes_{h=0}^{j-1}  \Gamma_{\mathbb{F}_p[\tau]/\tau^{p-1}}(\mu_h) \right ) \otimes_{\mathbb{F}_p[\tau]/\tau^{p-1}} \left (  \bigotimes_{h=1}^{j-1}  \Lambda_{\mathbb{F}_p[\tau]/\tau^{p-1}}(\lambda_h) \right ) \\
    \hookrightarrow \pi_{*,*}(MHH(M\mathbb{Z}/p)/\tau^{p-1})
\end{multline*}
such that the inclusion is an isomorphism in degrees smaller than or equal to $2p^j-2$. In particular, we have that the columns $E^{2p^j-1}_{a,*,*}$ for $1 \leq a \leq 2p^j-2$ and the rows $E^{2p^j-1}_{*,b,*}$ for $1 \leq b \leq 2p^j-3$ are empty.

The above description of the page and the convergence hypothesis that the $d^{2p^j-1}$-differential produces an isomorphism:
\[
    {d^{2p^j-1}}: E^{2p^j-1}_{2p^j-1,0,*}\xrightarrow{\sim} E^{2p^j-1}_{2p^j-1,0,*} \cong (\mathbb{F}_{p}[\tau]/\tau^{p-1})\{\xi_j\}
\]

Hence there exists an element $\lambda_j\in E^{2p^j-1}_{2p^j-1,0,p^j-1}$ such that $E^{2p^j-1}_{2p^j-1,0,*} \cong (\mathbb{F}_{p}[\tau]/\tau^{p-1})\{\lambda_j\}$.

Observe that because of the Leibniz rule, this $d^{2p^j-1}$-differential kills all the multiples of $\xi_j$ in the zeroth column: for any $x \in E^{2p^j-1}_{0,*,*}$, we have a differential:
\[
    \lambda_j x \xrightarrow{d^{2p^j-1}} \xi_j x
\]
For $x\neq 0$, this differential is non-trivial because of the algebra description of the $E^{2p^j-1}$ page: on the one hand, the tensor decomposition of the $E^{2p^j-1}$-page \ref{eq:E^2p^j-1.page} assures that $\lambda_j x \neq 0$; on the other hand, $\xi_j x \neq 0$ because of the algebra description of the zeroth column.

Next we show that $\lambda_j^2=0$. As $\lambda_j$ survives up to the $E^{2p^j-1}$-page and has an odd degree, by the Leibniz rule for graded rings:
\[
    d^k(\lambda_j^2)=0 \text{ for } k \leq 2p^j-1.
\]
As again nothing but zero can survive in degrees different from zero if $\lambda_j^2\neq 0$ it has to support a non-trivial differential, longer than a $d^{2p^j-1}$ by what we have just seen. But $E^{2p^j-1}_{*,b,*}$ for $1 \leq b \leq 2p^j-3$ are empty by induction hypothesis, so will be empty in the higher pages as well: the only possibility left is to have a transgressive differential, so $k=2(2p^j-1)$. Now:
\[
    d^{2(2p^j-1)}(\lambda_j^2)\in E^{2(2p^j-1)}_{0,4p^j-3,2p^j-2}.
\]
The lowest degree $\mathbb{F}_{p}[\tau]/\tau^{p-1}$-module generators of $E^{2p^j-1}_{0,*,*}$ are:
\begin{equation*}
    \begin{gathered} 
    1 \text{ with (degree, weight): } (0,0) \\
    \xi_j \text{ with (degree, weight): } (2p^j-2, p^j-1) \\
    \tau_j \text{ with (degree, weight): } (2p^j-1,p^j-1) \\
    \xi_j^2 \text{ with (degree, weight): } (4p^j-4, 2p^j-2) \\
    \xi_j\tau_j \text{ with (degree, weight): } (4p^j-3, 2p^j-2) \\
    \xi_j^3 \text{ with (degree, weight): } (6p^j-6, 3p^j-3) \\
    \cdots \\
    \xi_{j+1} \text{ with (degree, weight): } (2p^{j+1}-2, p^{j+1}-1)
\end{gathered}
\end{equation*}
All the other generators have higher degrees. As one can see, the only possible recipient for $d^{2(2p^j-1)}(\lambda_j^2)$ is in the module generated by $\xi_j\tau_j$, which however vanishes after the $E^{2p^j-1}$-page. Because of this degree argument, all differentials out of $\lambda_j^2$ are trivial, hence $\lambda_j^2=0$.

To conclude, we show that an element is involved in a non-trivial $d^{2p^j-1}$ differential if and only if it is a multiple of $\lambda_j$, up to $d^{2p^j-1}$-cycles, or is a multiple of $\xi_j$. More precisely, we first see that any element $x \in E^{2p^j-1}_{*,0,*}$ can be written as $x= a \lambda_j + b$, with $d^{2p^j-1}(a)=d^{2p^j-1}(b)=0$. 
In fact let $x\in E^{2p^j-1}_{*,0,*}$ have $d^{2p^j-1}(x)= a \xi_1$. Consider the element $\lambda_l a$; one has $d^{2p^j-1}(\lambda_l a)=a \xi_1$ since $0=d^{2p^j-1}(d^{2p^j-1}(x))=d^{2p^j-1}(a)\xi_1$. Hence $b=x-\lambda_l a$ is a $d^{2p^j-1}$-cycle. Observe that, by the tensor decomposition of the $E^{2p^j-1}$-page, this characterises tensors starting from other rows: an elementary tensor $xw$ with $x\in E^{2p^j-1}_{*,0,*}$ and $w\in E^{2p^j-1}_{0,*,*}$ has a non-trivial $d^{2p^j-1}$-differential if and only if $x$ has a non-trivial $d^{2p^j-1}$-differential, and $d^{2p^j-1}(xw)=d^{2p^j-1}(x)w= a \xi_1 w$ with $a\in E^{2p^j-1}_{*,0,*}$ a $d^{2p^j-1}$-cycle. We can actually draw the above conclusion for any element of the $E^{2p^j-1}$-page, independently of the row: it supports a non-trivial differential if and only if it is a multiple of $\lambda_l$ up to some $d^{2p^j-1}$-cycle.

This proves that the ideal generated by $\lambda_j$ disappears from the $E^{2p^j}$-page.

Symmetrically, all elements in the image of the $d^{2p^j-1}$-differential are multiples of $\xi_j$; not all multiples of $\xi_j$ are, though, in the image of the $d^{2p^j-1}$-differential. However, consider an elementary tensor that is multiple of $\xi_j$, say $y=x c \xi_l^d$, with $x \in E^{2p^j-1}_{*,0,*}$, $d>0$ and $c \in E^{2p^j-1}_{0,*,*}$ not a multiple of $\xi_j$. By the previous point we can write $x= a \lambda_j + b$, with $d^{2p^j-1}(a)=d^{2p^j-1}(b)=0$, so that $y=( a \lambda_j + b) c \xi_j^d$. If $a \neq 0$, then $d^{2p^j-1}(y)=ac\xi_j^{d+1} \neq 0$. If $a =0$, the element $\lambda_j bc \xi_j^{d-1}$ supports a differential $d^{2p^j-1}(\lambda_j bc \xi_j^{d-1})=bc\xi_j^d=y$. Hence the image of the $d^{2p^j-1}$ consists precisely of those multiples of $\xi_j$ that have trivial $d^{2p^j-1}$-differential. This proves that also the ideal generated by $\xi_j$ disappears from the $E^{2p^j}$-page.

This concludes this step of the induction, proving hypothesis $H_{2p^j-1}$.
\subsubsection{\texorpdfstring{The $E^{2p^j}$ page}{The E\^{}(2p\^{}j) page}} \label{sec:E^{2p^j}}

Last, we prove $H_{2p^j}$ for any positive integer $j$. Observe that we already proved this for $j=0$ (section \ref{sec:E2}) and for $j=1$ (section \ref{sec:E^{2p}}); this step is, in fact, a generalisation of what happens in the $E^{2p}$ page: the proof follows the same arguments, with the additional complication of the generic exponent $j$ here.

Our induction hypothesis at this step is given by $H_{2p^j-1}$; in particular, recall the decomposition:
\begin{equation*}
\begin{gathered}
    E^{2p^j-1}_{*, 0, *} \cong \pi_{*,*}(MHH(M\mathbb{Z}/p)/\tau^{p-1})/\langle \upgamma_i(\mu_0), \ldots, \upgamma_i(\mu_{j-1}), \lambda_1, \ldots, \lambda_{j} \rangle_{i\geq 1}\\
    E^{2p^j-1}_{0, *, *} \cong 
    \pi_{*,*}(\mathcal{A}(p)/\tau^{p-1})/\langle \tau_0, \tau_1, \ldots, \tau_{j-1}, \xi_1, \ldots, \xi_{j} \rangle\\
    E^{2p^j}_{\star, \bullet, *} \cong E^{2p^j}_{\star,0,*} \otimes_{\mathbb{F}_{p}[\tau]/\tau^{p-1}} E^{2p^j}_{0,\bullet,*}
\end{gathered}
\end{equation*}
and the injection:
\begin{multline*}
    \left ( \bigotimes_{h=0}^{j-1}  \Gamma_{\mathbb{F}_p[\tau]/\tau^{p-1}}(\mu_h) \right ) \otimes_{\mathbb{F}_p[\tau]/\tau^{p-1}} \left (  \bigotimes_{h=1}^{j}  \Lambda_{\mathbb{F}_p[\tau]/\tau^{p-1}}(\lambda_h) \right ) \\
    \hookrightarrow \pi_{*,*}(MHH(M\mathbb{Z}/p)/\tau^{p-1})
\end{multline*}
which is an isomorphism up to degree $2p^j-1$. In particular, the rows $E^{2p^j}_{*,a,*}$ for $1 \leq a \leq 2p^j-2$ and the columns $E^{2p^j}_{b,*,*}$ for $1 \leq b \leq 2p^j-1$ are empty. 

To prove $H_{2p^j}$ we adopt the same strategy as in section \ref{sec:E^{2p}}; in particular we make four statements, for each $s \geq 1$:
\begin{enumerate}[start=1, label={$\arabic*_s.$}]
    \item There exist elements $\mu_j=\upgamma_1 (\mu_j),\, \upgamma_2 (\mu_j), \ldots \upgamma_s (\mu_j)$, with $\upgamma_i (\mu_j) \in E^{2p^j}_{(2p^j i, 0, i(p^j-1))}$, respecting the multiplicative relations of divided powers in characteristic $p$ expressed by homogeneous equations of degree at most $2p^j s$ and  such that $d^{2p^j}(\upgamma_i (\mu_j))=\upgamma_{i-1} (\mu_j) \tau_j$. Recall that we use the convention $\upgamma_0 (\mu_j)=1$.
    
    \item The identity $(\upgamma_i (\mu_j))^p=0$ is known to hold for all integers $i \leq s$ but the largest power of $p$. 
    
    \item Up to degree $2p^js$, every element in $E^{2p^j}_{*,0,*}$ can be written as $\sum_{i=0}^s \alpha_i \cdot \upgamma_i(\mu_j)$, with $d^{2p^j}(\alpha_i)=0$ and $\upgamma_0(\mu_j)=1$. In particular, every non-trivial $d^{2p^j}$ differential originates in an element of the form $\sum_{i=1}^s \alpha_i \cdot \upgamma_i(\mu_j)$, with $d^{2p^j}(\alpha_i)=0$.\\
    Moreover $\sum_{i=0}^s \alpha_i \cdot \upgamma_i(\mu_j)=0$ if and only if $\alpha_i=0$ for all $i$.
    
    \item Every row starting with a multiple of $\tau_j$ vanishes after the $E^{2p^j}$-page, up to degree $2p^j(s-1)$. In particular, all such elements are targets of a non-trivial $d^{2p^j}$-differential.
    \end{enumerate}

The proof is again divided into three parts.

\begin{enumerate}[label={\Alph*.}]
    \item We first show that, for each $s \geq 1$,
    \[
        1_s, 3_{s-1}\Rightarrow 3_s \Rightarrow 4_s. 
    \]
    
    \item Then we show that it is enough to prove statements $1$ and $2$ on the powers of $p$, namely the implication:
    \[
        1_1,\ldots, 1_{p^k},2_1,\ldots, 2_{p^k} \Rightarrow 1_s,2_s \text{ for all } s \leq p^{k+1}-1.
    \]
    \item Finally, we prove $1_{p^{k}}$ and $2_{p^{k}}$ with an induction argument; the base step consists of showing $1_1$ and $2_1$(which is a trivial statement), while the induction one is:
    \[
        1_{p^{k}-1},2_{p^{k}-1},3_{p^{k}-1},4_{p^{k}-1} \Rightarrow 1_{p^{k}},2_{p^{k}} \text{ for all } k\geq 1.
    \]
\end{enumerate}

The proofs of $A.$ and $B.$ are the same as in section \ref{sec:E^{2p}}, up to relabelling of $\mu_1$ with $\mu_j$ and $\tau_1$ with $\tau_j$, and adjusting the corresponding degrees and weights, so we are not presenting them again.

To prove $C.$, we proceed by induction. Our strategy closely follows the one in \ref{sec:E^{2p}}.

Statement $1_1$ constitutes the base step: it reduces to the existence of an element $\mu_j=\upgamma_1 (\mu_j) \in E^{2p^j}_{2p^j,0,p^j-1}$ and a $2p^j$-differential: $d^{2p^j}(\mu_j)=\tau_j$. This immediately follows from the structure of the $E^{2p^j}$-page: the element $\tau_j$ is $\tau_j \neq 0 \in E^{2p^j}_{0, 2p^j-1, p^j-1}$, so the transgressive $d^{2p^j}$-differential:
\[
    d^{2p^j}_{2p^j,0,*}: E^{2p^j}_{2p^j,0,*} \to E^{2p^j}_{0, 2p^j-1,*} \cong (\mathbb{F}_{p}[\tau]/\tau^{p-1})\{\tau_j\} 
\]
has to be surjective. On the other hand, as we do not want to produce permanent cycles in degree $(2p^j,0,*)$ either, it has to be injective as well, hence:
\[
    E^{2p^j}_{2p^j,0,*} \cong (\mathbb{F}_{p}[\tau]/\tau^{p-1})\{\mu_j\}
\]
with $|\mu_j|=(2p^j,p^j-1)$.

We are now ready for the induction step. We begin by recalling what $E^{2p^j}_{0,*,*}$ looks like in low degrees; here the $\mathbb{F}_{p}[\tau]/\tau^{p-1}$-module generators with the smallest degrees are:
\begin{equation}\label{eq:degr.weights.E2p^j}
    \begin{gathered} 
    1 \text{ with (degree, weight): } (0,0) \\
    \tau_j \text{ with (degree, weight): } (2p^j-1, p^j-1) \\
    \xi_{j+1} \text{ with (degree, weight): } (2p^{j+1}-2,p^{j+1}-1) \\
    \tau_{j+1} \text{ with (degree, weight): } (2p^{j+1}-1,p^{j+1}-1) \\
\end{gathered}
\end{equation}

Observe that elements with higher degrees here have higher weights as well. Moreover, as weights in $\mathbb{F}_p[\tau]/\tau^{p-1}$ vary only from $0$ to $-p+2$, weights present the same span in each of the modules mentioned above.

To prove inductively $1_{p^k}$ for all larger $k$, as before we split the claim into three smaller statements regarding the element $\upgamma_{p^k-1}(\mu_j) \tau_j$:
\begin{enumerate}[label={(\roman*)}]
    \item No polynomial of elements of degree smaller then $p^k(2p^j)$ hits it with a $d^{2p^j}$ differential
    \item It has trivial $d^{2p^j}$
    \item It cannot support a longer non-trivial differential.
\end{enumerate}
These statements, together with the fact that $\upgamma_{p^k-1}(\mu_j) \tau_j$ is not a permanent cycle, imply the existence of a new independent class $\upgamma_{p^k}(\mu_j) \in E^{2p^j}_{p^k(2p^j),0, p^k(p^j-1)}$ with
\[
    d^{2p^j}(\upgamma_{p^k}(\mu_j))=\upgamma_{p^k-1}(\mu_j) \tau_j,
\]
in other words, statement $1_{p^k}$.

To prove $(i)$, one proceeds by contradiction, supposing such a polynomial exists. As the smallest non-zero first degree in $E^{2p^j}$ is $2p^{j}$, all factors in the monomials composing such polynomial have a degree smaller than or equal to $(p^k-1)2p^j$. Hence we can apply induction hypotheses $1_{p^k-1}$, $2_{p^k-1}$, $3_{p^k-1}$ and write it as:
\[
    \sum_{s=1}^{p^j-1}a_s \upgamma_s(\mu_j)+e(\upgamma_{p^{j-1}}(\mu_j))^p
\]
with $d^{2p^j}(a_s)=0$ and $e \in \mathbb{F}_p[\tau]/\tau^{p-1}$.
One then proceeds as in the relevant part of section \ref{sec:E^{2p}}, applying in this case the $d^{2p^j}$-differential, to reach a contradiction.

Statement $(ii)$ is just the observation that:
\[
    d^{2p^j}(\upgamma_{p^k-1}(\mu_j) \tau_j)=\upgamma_{p^k-2}(\mu_j) \tau_j \tau_j =0
\]
as $\tau_j^2=0$.

For the proof of $(iii)$  we assume by contradiction that $\upgamma_{p^k-1}(\mu_j) \tau_j$ supports a longer, non-trivial differential. Arguing as in section \ref{sec:E^{2p}}, one finds a positive integer $n$ and elements:
\begin{equation*}
\begin{gathered}
    \alpha_1 \in E^{2p^j}_{(p^k-1)2p^j- m_1,0, p^k(p^j-1)- w_1}\\
    \beta_1 \in E^{2p^j}_{0, 2p^j+ m_1-2, w_1}\\
    \alpha_h \in E^{2p^j}_{(p^k-1)2p^j-\sum_{t=1}^h m_k,0, p^k(p^j-1)-\sum_{t=1}^h w_k}\\
    \beta_h \in E^{2p^j}_{0, m_h-1, w_h}
\end{gathered}
\end{equation*}
for all $h=2, \ldots n$ with the constraints:
\begin{equation}\label{eq:inequalities.deg.wei.muj}
\begin{gathered}
    \alpha_n=1 \in E^{2p^j}_{0,0,0}\\
    m_1 \geq 2p^{j+1}-2p^j\\
    w_1 \geq p^{j+1}-p+1\\
    m_h \geq 2p^{j+1}-1\\
    w_h \geq p^{j+1}-p+1.
\end{gathered}
\end{equation}
for all $h=2,\ldots,n$. Moreover, observe that all the $\alpha_h \beta_h$ live in the region controlled by the induction hypothesis $4_{p^k-1}$: as all the $\beta_h$ survive after the $E^{2p^j}$-page, they are homogeneous $\mathbb{F}_p[\tau]/\tau^{p-1}$-polynomials in which $\tau_0,\ldots,\tau_j$ or $\xi_1,\ldots,\xi_j$ do not appear. Hence their degrees and weights are the same as those of monomials in $\pi_{*,*}(\mathcal{A}(p)/\tau^{p-1})$ where $\tau_0,\ldots,\tau_j$ and $\xi_1,\ldots,\xi_j$ do not appear; in particular, we can apply \ref{cor:rough.est.Q(i)} with $i=j+1$ and get the estimates:
\begin{equation*}
\begin{gathered}
    \frac{2p^{j+1}-2p+1}{4p^{j+1}-4} \leq \frac{w_1}{2p^j+ m_1-2} \leq \frac{1}{2} \\
    \frac{2p^{j+1}-2p+1}{4p^{j+1}-4} \leq \frac{w_h}{m_h-1} \leq \frac{1}{2}
\end{gathered}
\end{equation*}
for all $h=2,\ldots,n$. In particular, we are interested in the estimates provided by the left-hand sides:
\begin{equation*}
\begin{gathered}
    \frac{2p^{j+1}-2p+1}{4p^{j+1}-4} (2p^j+ m_1-2) \leq w_1\\
    \frac{2p^{j+1}-2p+1}{4p^{j+1}-4}(m_h-1) \leq w_h
\end{gathered}
\end{equation*}
Summing on all indices one gets:
\begin{equation}\label{eq:sum.on.all.muj}
    \frac{2p^{j+1}-2p+1}{4p^{j+1}-4}(2p^j-n+1+\sum_{h=1}^n m_h) \leq \sum_{h=1}^n w_h
\end{equation}

As $a_n=1$, it has bi-degree $(0,0)$, so we get equations:
\begin{equation}\label{eq:mk.wk.muj}
    \begin{cases}
        \sum_{k=1}^n m_k = (p^k-1)2p^j\\
        \sum_{k=1}^n w_k = p^k(p^j-1).
    \end{cases}
\end{equation}

Substitute in \ref{eq:sum.on.all.muj}:
\begin{multline*}
    \frac{2p^{j+1}-2p+1}{4p^{j+1}-4}(2p^j-n+1+(p^k-1)2p^j) \leq p^k(p^j-1)\\
    (2p^{j+1}-2p+1)(2p^j-n+1+(p^k-1)2p^j) \leq p^k(p^j-1)(4p^{j+1}-4)\\
    (2p^{j+1}-2p+1)(-n+1+p^{k+j}) \leq 4p^k(p^j-1)(p^{j+1}-1)\\
    -n+1+p^{k+j} \leq \frac{4p^k(p^j-1)(p^{j+1}-1)}{2p^{j+1}-2p+1}
\end{multline*}
As $2p^{j+1}-2p+1 > 2p^{j+1}-2p$:
\begin{multline*}
    -n+1+p^{k+j} \leq \frac{4p^k(p^j-1)(p^{j+1}-1)}{2p^{j+1}-2p}= 2p^{k-1}(p^{j+1}-1)\\
    -n \leq 2p^{k-1}(p^{j+1}-1)-1-2p^{k+j}=2p^{k+j}-2p^{k-1}-1-2p^{k+j}\\
    -n \leq -2p^{k-1}-1 \leq -2p^{k-1}
\end{multline*}
Hence $n \geq 2p^{k-1}$.

Now go back to the inequalities in \ref{eq:inequalities.deg.wei.muj} and combine them with \ref{eq:mk.wk.muj}:
\begin{equation*}
    \begin{aligned}
        p^{k+j}-p^k=p^k(p^j-1) &= \sum_{k=1}^n w_k \geq \sum_{k=1}^n (p^{j+1}-p+1) =n(p^{j+1}-p+1) \\
        &\geq 2p^{k-1}(p^{j+1}-p+1)=2p^{k+j}-2p^k+2p^{k-1}
    \end{aligned}
\end{equation*}
\[
    0 \geq p^{k+j}-p^{k}+2p^{k-1}>0
\]
for all $j>0$. This contradiction concludes this argument, proving the existence of $\upgamma_{p^k} (\mu_j)$.

We need now to show $2_{p^k}$, that is, $(\upgamma_{p^{k-1}} (\mu_j))^p=0$. We adopt the same strategy as in \ref{sec:E^{2p}} (or as in the previous point): if $(\upgamma_{p^{k-1}} (\mu_j))^p\neq 0$, then it survives up to the $E^{2p^j}$-page. Moreover, by the Leibniz rule:
\[
    d^{2p^j}((\upgamma_{p^{k-1}} (\mu_j))^p)=p (\upgamma_{p^{k-1}} (\mu_j))^{p-1}\upgamma_{p^{k-1}-1} (\mu_j) \tau_j=0
\]
because of the characteristic of the base ring.
Hence, if non-trivial, it has to support a differential longer than a $d^{2p^j}$; reasoning as in \ref{sec:E^{2p}}, we find a positive integer $n$ and, for all $1 \leq h \leq n$, elements:
\begin{equation*}
\begin{gathered}
    \alpha_h \in E^{2p^j}_{2p^{j+k}-\sum_{t=1}^h m_k,0, p^k(p^j-1)-\sum_{t=1}^h w_k}\\
    \beta_h \in E^{2p^j}_{0, m_h-1, w_h}
\end{gathered}
\end{equation*}
with the constraints:
\begin{equation}\label{eq:geq.mh.wh.muj^p}
\begin{gathered}
    \alpha_n=1 \in E^{2p^j}_{0,0,0}\\
    m_h \geq 2p^{j+1}-1\\
    w_h \geq p^{j+1}-p+1
\end{gathered}
\end{equation}
for all $h=1, \ldots n$. By the same argument used in the proof of $1_{p^j}$, we find the constraint:
\[
    \frac{2p^{j+1}-2p+1}{4p^{j+1}-4}(m_h-1) \leq w_h
\]
for all $1 \leq h \leq n$. By summing on all $h$ and rearranging the factors:
\begin{equation}\label{eq:sum.deg.wei.muj^p}
\begin{gathered}
    (2p^{j+1}-2p+1)\left ( \left (\sum_{h=1}^n m_h \right )-n \right) \leq \left (\sum_{h=1}^n w_h \right)(4p^{j+1}-4)
\end{gathered}
\end{equation}

As before, by the condition $\alpha_n=1$ we get equations on the degree and weight:
\begin{equation}\label{eq:deg.wei.muj^p}
    \begin{cases}
        2p^{j+k}=\sum_{t=1}^n m_k\\
        p^k(p^j-1)=\sum_{t=1}^n w_k.
    \end{cases}
\end{equation}
Substitute in \ref{eq:sum.deg.wei.muj^p}:
\[
    (2p^{j+1}-2p+1)(2p^{j+k}-n) \leq p^k(p^j-1)(4p^{j+1}-4)
\]
As $2p^{j+1}-2p+1 \geq 2p^{j+1}-2p=2p(p^j-1)$:
\begin{equation*}
\begin{gathered}
    2p^{j+k}-n \leq \frac{p^k(p^j-1)(4p^{j+1}-4)}{2p^{j+1}-2p}=2p^{k-1}(p^{j+1}-1)=2p^{j+k}-2p^{k-1}\\
    n \geq 2p^{k-1}
\end{gathered}
\end{equation*}

Now consider again the equality on degrees from \ref{eq:deg.wei.muj^p}, substitute the constraints from \ref{eq:geq.mh.wh.muj^p} and the newly found estimate on $n$:
\begin{equation*}
\begin{aligned}
    2p^{j+k}=\sum_{t=1}^n m_k &\geq \sum_{t=1}^n (2p^{j+1}-1)= (2p^{j+1}-1) n \\
    & \geq (2p^{j+1}-1)2p^{k-1}=4p^{j+k}-2p^{k-1}
\end{aligned}
\end{equation*}
Hence:
\[
2p^{j+k}\leq 2p^{k-1}
\]
which is a clear contradiction for all $j \geq 0$. This proves assertion $2_{p^k}$ and concludes our proof. 

By assertions $3_s$ and $4_s$, we deduce what happens in the $E^{2p^j}$-page: the $d^{2p^j}$-differential has image the ideal generated by $\tau_j$; moreover, all the non-trivial differentials are generated by the divided powers of $\mu_j$. Observe that, as the spectral sequence is first quadrant and the $E^{\infty}$-page is non-trivial only at the origin, there cannot be any other element in $\pi_{*,*}(MHH(M\mathbb{Z}/p)/\tau^{p-1})$ in degree less than or equal to $2p^j$ but those we have identified so far. Hence, $H_{2p^j}$ holds.

This concludes the proof regarding the behaviour of the spectral sequence and the structure of $\pi_{*,*}(MHH(M\mathbb{Z}/p)/\tau^{p-1})$.

\clearpage
\relax

\printbibliography[heading=bibintoc]

\end{document}